\newcommand{\N}{\mathbb{N}}
\newcommand{\bR}{\mathbb{R}}
\newcommand{\R}{\mathbb{R}}
\newcommand{\Z}{\mathbb{Z}}
\newcommand{\bZ}{\mathbb{Z}}
\newcommand{\T}{\mathbb{T}}
\newcommand{\bT}{\mathbb{bT}}
\renewcommand{\H}{\mathbb{H}}
\renewcommand{\H}{\mathbb{H}}
\newcommand{\epsilonSubS}{{\epsilon_0}}
\newcommand{\cF}{\mathcal{F}}
\newcommand{\allballs}{\mathcal{\hat{G}}}
\newcommand{\smallballs}{\mathcal{G}}
\newcommand{\flatballs}{{\mathcal{G}_2}}
\newcommand{\curvyballs}{{\mathcal{G}_1}}
\newcommand{\cB}{\mathcal{B}}
\newcommand{\cH}{\mathcal{H}}
\newcommand{\net}{\mathbb{X}}
\newcommand{\ball}{B}
\newcommand{\rad}{r}
\newcommand{\dist}{d}
\newcommand{\jump}{J}
\newcommand{\child}{\mathcal{D}}
\renewcommand{\epsilon}{\varepsilon}
\renewcommand{\phi}{\varphi}
\renewcommand{\emptyset}{\varnothing}
\newtheorem{theorem}{Theorem}[section]
\newtheorem{proposition}[theorem]{Proposition}
\newtheorem{lemma}[theorem]{Lemma}
\newtheorem{cor}[theorem]{Corollary}
\newtheorem{maintheorem}{Theorem}
\newtheorem{remark}[theorem]{Remark}
\newtheorem{note}[theorem]{Note}
\DeclareMathOperator{\diam}{diam}
\DeclareMathOperator{\cent}{Center}
\begin{document}

\title{THE TRAVELING SALESMAN PROBLEM IN THE HEISENBERG GROUP: UPPER BOUNDING CURVATURE}
\author{Sean Li \and Raanan Schul}
\date{\today}
\subjclass[2010]{Primary 28A75, 53C17}
\keywords{Heisenberg group, Traveling Salesman Theorem, Jones $\beta$ numbers, curvature}
\address{Department of Mathematics, The University of Chicago, Chicago, IL 60637}
\email{seanli@math.uchicago.edu}
\address{Department of Mathematics, Stony Brook University, Stony Brook, NY 11794-3651}
\email{schul@math.sunysb.edu}

\begin{abstract} 
We show that if a subset $K$ in the Heisenberg group (endowed with the Carnot-Carath\'{e}odory metric) is contained in a rectifiable curve, then it satisfies a modified analogue of Peter Jones's geometric lemma.  
This is a quantitative version of  the statement that a finite length curve has a tangent at almost every point.
This condition complements that of \cite{FFP} except a power 2 is changed to a power 4.  Two key tools that we use in the proof are a geometric martingale argument like that of \cite{Schul-TSP} as well as a new curvature inequality in the Heisenberg group.
\end{abstract}

\maketitle


\section{Introduction}

Let $\H$ denote the Heisenberg group, endowed with the Carnot-Carath\'{e}odory distance.  For $B=B(x,r) \subset \H$, a (closed) ball  of radius $r$ centered at $x$, and a set $K$ we define $\beta_K(B)$ to be
\begin{align*}
 \beta_K(B)=\inf\limits_{L} \sup\limits_{x\in K\cap B} \frac{\dist(x,L)}{\diam(B)},
\end{align*}
where the infimum is taken over all {\it horizontal lines} $L$.   We will describe both the horizontal lines and the metric for the Heisenberg group in the next section.
The number  $\beta_K$ is a coarse notion of curvature associated to the ball $B$. 
This notion of curvature is tested on a fixed scale $r$, the radius of $B$.
A natural thing to consider is  looking at balls of many scales. The topic of this paper is an upper bound for an integral or sum of this notion of curvature, where the sum is  over all scales and locations.  This is not a new idea, and we discuss its long history later in this section.

A set $\Gamma$ is called a {\it rectifiable curve} if it is the  image of a finite length curve, or, equivalently, the Lipschitz image of a finite interval.
We will use $\cH^k$ to denote the $k$-dimensional Hausdorff measure and $\ell(\gamma)$ to denote the arclength of a curve $\gamma$.
In this paper we prove the following theorem.
\begin{maintheorem} \label{t:maintheorem-INT}
There is a constant $C>0$ such that for any  rectifiable curve $\Gamma$
the following holds.
We have 
\begin{align}
\int\limits_{\H} \int\limits_0^{+\infty} \beta_\Gamma(B(x,t))^4\frac{dt}{t^4}d\cH^4(x) 
\leq C\cH^1(\Gamma).  \label{e:sum-beta-upper-bound-INT}
\end{align}
\end{maintheorem}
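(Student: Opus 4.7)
The plan is to adapt the martingale / gain-accounting strategy of \cite{Schul-TSP} to the Heisenberg setting, with the factor-of-two jump in the exponent (from $\beta^2$ in $\bR^n$ to $\beta^4$ in $\H$) coming from a new Heisenberg-specific curvature inequality. First I would discretize: because $(\H, \dist, \cH^4)$ is $4$-Ahlfors regular, a standard change of variables shows that the left-hand side of (\ref{e:sum-beta-upper-bound-INT}) is comparable to the dyadic sum
\[
 \sum_{k \in \bZ} \sum_{x \in \net_k} \beta_\Gamma\bigl(B(x, 2^k)\bigr)^4 \cdot 2^k,
\]
where $\net_k$ is a maximal $2^k$-separated net of a neighborhood of $\Gamma$. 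It therefore suffices to bound this discrete sum by $C\, \cH^1(\Gamma)$.

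The central new geometric ingredient is a \emph{curvature inequality} in $\H$. Roughly, if three points $p, q, r$ lie in a ball of radius $R$ and $q$ has CC-distance at least $\delta R$ from a horizontal line through (or best-approximating) $\{p, r\}$, then the triangle-inequality deficit satisfies
\[
 \dist(p,q) + \dist(q,r) - \dist(p,r) \gtrsim \delta^4 R,
\]
as opposed to the Euclidean lower bound of $\delta^2 R$. The change of exponent reflects the anisotropic nature of the CC metric: vertical displacement by $h$ has CC-cost of order $\sqrt{h}$, so what is a second-order Taylor expansion in the Euclidean case becomes a fourth-order one in $\H$.

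Granted the curvature inequality, I would then run the martingale / tree argument of \cite{Schul-TSP}. After parametrizing $\Gamma$ by arclength, at each ``bad'' ball $B = B(x, 2^k)$ with $\beta_\Gamma(B) \geq \epsilon$, one picks three points along $\gamma$ inside $B$ that witness the $\beta$-deviation and applies the inequality to extract a ``gain'' of arclength $\gtrsim \beta_\Gamma(B)^4 \cdot 2^k$ from $\gamma$ near those points. The multi-resolution bookkeeping ensures that these gains at different scales and locations are essentially non-overlapping, so summing gives
\[
 \sum_{k, x} 2^k \cdot \beta_\Gamma\bigl(B(x, 2^k)\bigr)^4 \lesssim \ell(\gamma) = \cH^1(\Gamma),
\]
yielding the theorem. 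The main obstacles are (i) proving the curvature inequality in a sufficiently robust, scale-invariant form to handle pairs $p, r$ for which no exact horizontal line exists and configurations where $\beta$ is only marginally above threshold, and (ii) organizing the gains so that the sharp $\delta^4$ estimate---more delicate than the Euclidean $\delta^2$---is not dissipated by slack in the choice of representative triples, horizontal-line approximations, or scale separations.
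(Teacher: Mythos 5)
Your high-level strategy --- discretize to a multiresolution sum, prove a Heisenberg curvature inequality with a fourth power, then do Schul--TSP--style gain accounting --- matches the paper's, and you correctly identify why the exponent jumps from $2$ to $4$. However, the step ``at each bad ball $B$ one picks three points along $\gamma$ inside $B$ that witness the $\beta$-deviation and applies the inequality to extract a gain'' has a genuine hole: it only works when the $\beta$-deviation is realized by bending of a single connected subarc of $\gamma$. But $\beta_\Gamma(B)$ can be large even when every connected piece of $\gamma$ through $B$ is individually nearly straight and horizontal --- the deviation then comes from two well-separated branches of $\Gamma$ crossing the same ball. For such balls, any triple chosen along one subarc lies essentially on a horizontal segment, the curvature inequality gives no gain, and the argument you describe produces nothing. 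The paper therefore splits the small balls into a non-flat family $\curvyballs$ (handled exactly as you suggest, via the curvature inequality of Proposition~\ref{p:prop-4} applied along a filtration of arcs and summed in an $\ell^4$ fashion in Proposition~\ref{p:beta-filtration}) and a flat family $\flatballs$, which requires a completely separate geometric-martingale argument (Propositions~\ref{prop:large-sum-redone} and~\ref{p:martingale-prop}) that extracts length from the mere fact that two separated branches must both traverse $B$, making no use of the curvature inequality at all. Your sketch covers only the first half, and you do not list the flat case among your obstacles.

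Two smaller points. In $\H$ there is generically no horizontal line through two given points, so a three-point statement of the form ``$q$ is far from a horizontal line through $\{p,r\}$'' is ill-posed; the paper's curvature inequality (Proposition~\ref{p:prop-4}) is stated for four points $p_1,\dots,p_4$ and measures distances from the asymmetric interpolants $\overline{p_ip_{i+1}}$ to $\overline{p_1p_4}$, where $\overline{ab}$ starts at $a$ in the direction of $\tilde{\pi}(a^{-1}b)$ and need not contain $b$; a three-point version is then a degenerate case obtained by setting $p_2=p_3$. Also, the worry you raise about the $\delta^4$ gain being dissipated across scales is the right one, and it is resolved in the paper by the $\ell^4$-triangle-inequality summation in the proof of Proposition~\ref{p:beta-filtration}, together with telescoping the triangle-inequality excess over the filtration; your sketch names the obstacle but does not supply the mechanism that overcomes it.
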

We remark that the use of Hausdorff measure of dimension 4 directly corresponds to the Hausdorff dimension of $\H$ and the  power of $t$.  However, it does not correspond to the power 4 of $\beta$.  That 4 comes from the modulus of curvature coming directly from the Heisenberg geometry.  In an $n$-dimensional Euclidean space,   the same formula holds where the power of $\beta$ is 2, and the power of $t$ as well as the Hausdorff measure dimension are $n$ \cite{Jones-TSP,  Ok-TSP} (see Section \ref{s:background}).



\subsection{Background}\label{s:background}
We give below a brief survey for a subject which can easily  (and does) fill books (see e.g. \cite{Pajot} for a nice exposition). 

A result of this type was first proven by Jones in \cite{Jones-TSP}.   There he showed 
that  a bounded set $K\subset\bR^2$ is a subset of a rectifiable curve in $\bR^2$ if and only if
\begin{equation*} 
\int\limits_{\bR^2} \int\limits_0^{+\infty} \beta_K(B(x,t))^2 \frac{dt}{t^2}d\cH^2(x) <\infty\,.
%
\end{equation*} 
Moreover, there exists a constant $C>0$ (independent of $K$) such that
\begin{itemize}
\item 
for every connected set $\Gamma$ containing $K$
$$\int\limits_{\bR^2} \int\limits_0^{+\infty} \beta_K(B(x,t))^2 \frac{dt}{t^2}d\cH^2(x) 
\leq C \cH^1(\Gamma)$$ 
\item  there exists (with explicit construction) a connected set $\Gamma\supset K$ such that
$$\cH^1(\Gamma)\leq C\left(\diam(K) +\int\limits_{\bR^2} \int\limits_0^{+\infty} \beta_K(B(x,t))^2 \frac{dt}{t^2}d\cH^2(x)\right)$$
\end{itemize} 
Above, $\beta_K$ is defined as in $\H$, except we take infimum over all lines in $\R^2$. 
The result actually holds in $\bR^n$: the existence of such a $\Gamma$ above actually worked for $\bR^n$, and Okikiolu \cite{Ok-TSP} completed the upper bound on the integral to hold in $\bR^n$ as well.  In $\bR^n$, the integral  in question is 
$$\int\limits_{\bR^n} \int\limits_0^{+\infty} \beta_K(B(x,t))^2 \frac{dt}{t^n}d\cH^n(x)\,.$$ 
We note that these results were actually phrased using sums and not integrals:
There exists a constant $C>0$ (independent of $K$) such that
\begin{itemize}
\item 
for every connected set $\Gamma$ containing $K$
$$\sum_{Q\in\Delta(\bR^n)}\beta_K(3Q)^{2}\diam Q 
\leq C \cH^1(\Gamma)$$ 
\item  there exists (with explicit construction) a connected set $\Gamma\supset K$ such that
$$\cH^1(\Gamma)\leq C\left(\diam(K) +\sum_{Q\in\Delta(\bR^n)}\beta_K(3Q)^{2}\diam Q\right).$$
\end{itemize}
where $\Delta(\bR^n)$ is the collection of dyadic squares. This two sided inequality is known as the geometric/analytic traveling salesman theorem (in $\R^n$) and has had applications in the study of harmonic measure \cite{BJ}.
This result was generalized to a characterization of subsets of a rectifiable curves in Hilbert space by  the second author \cite{Schul-TSP} (where one must replace cubes with a family of balls centered on the set, as in eq. \eqref{e:sum-beta-upper-bound} below).  A rich theory connecting the above with singular integrals was developed by David-Semmes and others \cite{DS91, DS93}.  In the last decade people have sought to generalize this to general metric spaces \cite{Ha-AR, Schul-AR,Ha-non-AR}.  

One particular metric space where this phenomenon has been studied is the Heisenberg group.  In that setting, there are many analogues of Euclidean notions, including translation, dilation structure, and horizontal lines.  Thus, it can be hoped that the Euclidean results would translate over.  In \cite{FFP}, the authors showed one side of the traveling salesman inequality, that is, $K$ is contained in a rectifiable curve in $\H$ if
\begin{align}
\int\limits_{\H} \int\limits_0^{+\infty} \beta_K(B(x,t))^2 \frac{dt}{t^4}d\cH^4(x) <\infty\,.
\end{align}
Moreover, there exists a connected set $\Gamma \supseteq K$ such that
\begin{align}
\cH^1(\Gamma) \leq C\left(\diam(K) + 
\int\limits_{\H} \int\limits_0^{+\infty} \beta_K(B(x,t))^2 \frac{dt}{t^4}d\cH^4(x) 
\right) \label{FFP-ineq}
\end{align}
for some universal constant $C > 0$. 
It was probably natural then to expect that the opposite inequality in the traveling salesman theorem also holds in the Heisenberg group.  However, Juillet constructed in \cite{juillet} a sequence of rectifiable curves $\{\Gamma_n\}_{n=1}^\infty$ such that $\cH^1(\Gamma_n)$ was bounded but
\begin{align}\label{juillet-ineq}
\int\limits_{\H} \int\limits_0^{+\infty} \beta_{\Gamma_n}(B(x,t))^2 \frac{dt}{t^4}d\cH^4(x) \to \infty\,.
\end{align}
(The results in \cite{juillet} were actually given with an equivalent sum replacing the integral).

In this context, our result shows that the missing inequality is true if we change the power of the exponent from 2 to 4.  The motivation for such a modification follows from a similar result of \cite{Li}, where one had access to the function of the curve itself rather than just the image of the curve.  There, it was shown that a parametric version of the main theorem holds where the $\beta$ numbers measured the deviation of a subcurve from a horizontal ``affine'' function with respect to its domain.  In the same paper, the power of $\beta$ was related to the Markov convexity of the target space.  For the Heisenberg group, the Markov convexity was recently calculated to be 4 \cite{Li-2}.  In this paper, we will not use convexity but rather the related notion of curvature.  As a very rough description, the proof in this paper follows that of both  \cite{Schul-TSP} and \cite{Schul-AR}, however the technicalities involved are different.

A classical understatement is that Jones's traveling salesman theorem \cite{Jones-TSP} is {\it just the Pythagorean theorem}.  The Pythagorean theorem is used to estimate the excess in the triangle inequality by the height squared divided by the diameter (see Remark 1.2 in \cite{Schul-AR}).  As the Pythagorean theorem no longer holds in the Heisenberg group, one needs to derive a new curvature inequality, which is done in 
Part B.
Further technicalities arise as two horizontal line segments in the Heisenberg group whose endpoints are $\epsilon$ apart may be as far apart as $\sqrt{\epsilon}$ in the middle. 

\begin{remark}
It should be noted that the power 4 in \eqref{e:sum-beta-upper-bound-INT} cannot be improved.  Indeed, with a minor modification, the construction in \cite{juillet} (taking the parameter in the construction to be $\theta_k = \frac{c}{k^q}$ for $q > 1/2$ instead of $q = 1$) yields a sequence of rectifiable curves with bounded $\cH^1$ measure such that \eqref{juillet-ineq} holds for any modified power $p < 4$.
\end{remark}

Following our work in this paper, the central question of the traveling salesman in the Heisenberg group now becomes whether \cite{FFP} is tight.  As \eqref{e:sum-beta-upper-bound-INT} is known to be essentially tight, it seems like it may be possible to improve the power of \eqref{FFP-ineq} to 4, which would complete the traveling salesman theorem.  On the other hand, if a counterexample were to show that this hoped-for power-4 inequality is not true, then one can ask whether there may be another functional besides a weighted sum of powers of $\beta_K(B)$ that would acheive a two-sided theorem.  We hope to return to this question in a future paper  \footnote{See \cite{Li-Schul-construction-bounds} for an improvement of \eqref{FFP-ineq} to any power of $\beta$ which is less than $4$}.

\subsection{About constants}
There are many constants that are introduced throughout the paper.  These are set and discussed in a special environment which is denoted by {Note X.Y}.  There are 7 such notes throughout the paper.

The paper has two parts: Part A and Part B.  
Part B  has the sole purpose of proving  Proposition \ref{p:prop-4}.  It is independent of Part A (other than a general introduction of notation and known statements).  This proposition is separated out to ensure that the order of determining constants in clear.

\subsection{Acknowledgements}
S. Li was supported by a dissertation fellowship from New York University's Graduate School of Arts and Sciences and a postdoctoral research fellowship NSF DMS-1303910.
R.~ Schul was partially supported by a fellowship from the  Alfred P. Sloan Foundation as well as  by NSF  DMS 11-00008.
Some of this work was completed while the second author was visiting IPAM.
The authors wish to thank the referee for the detailed report as well as suggesting a better proof for Lemma \ref{l:lemma-8}. The referee's efforts helped fixed numerous mistakes as well as improve the exposition.

\section*{PART A}

\section{Preliminaries}

\subsection{The Heisenberg group}

The Heisenberg group is the three dimensional Lie group $(\bR^3,\cdot)$ where the group product is
\begin{align*}
  (x,y,z) \cdot (x',y',z') = \left(x+x',y+y',z+z' + \frac{1}{2}(xy'-x'y) \right).
\end{align*}
It can be immediately verified that the origin is also the identity 0.

There exists a natural path metric on the Heisenberg group that we define as such.  Using the smoothness of left multiplication, we can define a left-invariant subbundle $\Delta$ of the tangent bundle by letting $\Delta_0$ be the $xy$-plane.  Further endow $\Delta$ with a left-invariant scalar product $\{\langle \cdot, \cdot \rangle_x\}_{x \in \H}$.  Then given two points $x,y \in \H$, we can define the Carnot-Carath\'{e}odory distance between them as
\begin{align*}
  d_{cc}(x,y) := \inf \left\{ \int_a^b \langle \gamma'(x), \gamma'(x) \rangle_{\gamma(x)} dx : \gamma\in C^1([a,b];\H), \gamma(a) = x, \gamma(b) = y, \gamma'(x) \in \Delta_{\gamma(x)} \right\}.
\end{align*}
All continuous paths $\gamma : I \to \H$ that satisfy $\gamma'(x) \in \Delta_{\gamma(x)}$ (almost everywhere) are called {\it horizontal paths}.  One natural point of worry is whether there always exists such paths connecting any two points in the Heisenberg group.  Chow's theorem states that $d_{cc}(x,y) < \infty$ for all $x,y \in \H$ (see {\it e.g.} \cite{montgomery}).  As we are taking the Riemannian length over a subclass of curves, this geometry is sometimes called sub-Riemannian geometry.

We will not work directly with the Carnot-Carath\'{e}odory metric.  Instead, we define another metric that is biLipschitz equivalent to it.  The advantage of this new metric is that it is easy to calculate explicit distances between points.  Note that proving Theorem \ref{t:maintheorem-INT} for the new metric will also prove it for the Carnot-Carath\'{e}odory metric as the notion of rectifiability is preserved when passing to a biLipschitz equivalent metric.  All the biLipschitz constants will be absorbed into the constant $C$.
Note \ref{n:n2.4} is the place where we make the final choice of the metric we will be using.

For a given $\eta > 0$, we define
\begin{align*}
  N : \H &\to \R_+ \\
  (x,y,z) &\mapsto \left((x^2+y^2)^2 + \eta z^2\right)^{1/4}
\end{align*}
to be the Koranyi norm.  The following proposition is the result of \cite{cygan} and contains the definition of our metric.
\begin{proposition} \label{koranyi}
  The Koranyi metric $d(g,h) = N(g^{-1}h)$ is a left-invariant semimetric that is bi-Lipschitzly equivalent to the Carnot-Carath\'{e}odory metric.  If, in addition, $\eta \in (0,16]$, then the triangle inequality holds.
\end{proposition}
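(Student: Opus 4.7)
The plan is to verify each listed property in turn, using the explicit group law and the explicit formula for $N$.

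Left-invariance is immediate from the definition: for any $k\in\H$,
\begin{align*}
d(kg,kh) = N((kg)^{-1}(kh)) = N(g^{-1}k^{-1}kh) = N(g^{-1}h) = d(g,h).
\end{align*}
The semimetric axioms are also routine. Since $(x,y,z)^{-1}=(-x,-y,-z)$ and $N$ depends only on $x^2+y^2$ and $z^2$, we have $N(g^{-1})=N(g)$, which gives symmetry $d(g,h)=d(h,g)$. Positivity is immediate since $(x^2+y^2)^2+\eta z^2=0$ forces $x=y=z=0$.

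For the biLipschitz equivalence with $d_{cc}$, the key observation is that both $N$ and $g\mapsto d_{cc}(0,g)$ are $1$-homogeneous with respect to the Heisenberg dilations $\delta_s(x,y,z)=(sx,sy,s^2z)$: for $d_{cc}$ this follows from the definition via the subbundle $\Delta$, and for $N$ it is direct. Both functions are continuous on $\H$ and vanish only at the origin, so on the compact set $\{g:N(g)=1\}$ the continuous function $d_{cc}(0,g)$ is bounded above and below by positive constants. Combining homogeneity with left-invariance extends this to $d(g,h)\asymp d_{cc}(g,h)$ for all $g,h\in\H$.

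The main difficulty is the triangle inequality for $\eta\in(0,16]$; this is Cygan's contribution. Writing $g=(x,y,z)$ and $h=(x',y',z')$ and introducing complex coordinates $\zeta=x+iy,\zeta'=x'+iy'$, one computes
\begin{align*}
N(gh)^4 = |\zeta+\zeta'|^4 + \eta\bigl(z+z'+\tfrac{1}{2}\mathrm{Im}(\bar\zeta\zeta')\bigr)^2,
\end{align*}
and the task is to prove $N(gh)\le N(g)+N(h)$, equivalently $N(gh)^4\le (N(g)+N(h))^4$. The plan would be to expand the right-hand side by the binomial theorem as $\sum_{k=0}^{4}\binom{4}{k}N(g)^{k}N(h)^{4-k}$ and match each homogeneous piece against the corresponding piece of the left-hand side. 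The $|\zeta+\zeta'|^4$ term is controlled by the pure Euclidean triangle inequality in $\C$, while the term $\eta(z+z')^2$ is controlled by $\eta z^2+\eta z'^2$ combined with cross-terms of the form $N(g)^2 N(h)^2\ge \eta|zz'|$. The essential step is to absorb the mixed term $\tfrac{1}{2}\mathrm{Im}(\bar\zeta\zeta')$ into the product of the norms using Cauchy--Schwarz; the constant $\eta=16$ emerges as the sharp threshold at which this absorption just succeeds. I expect the bookkeeping for this last estimate to be the only real obstacle, and I would likely reduce to the extremal case by first fixing $|\zeta|,|\zeta'|$ and $|z|,|z'|$ and maximizing the left side over the phase of $\bar\zeta\zeta'$ and the sign of $z+z'$.
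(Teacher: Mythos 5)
The paper does not prove this proposition; it cites Cygan \cite{cygan} and moves on, so there is no paper proof to compare against line by line. Your verifications of left-invariance, symmetry, positivity, and the biLipschitz equivalence with $d_{cc}$ (via $1$-homogeneity under $\delta_s$ together with compactness of $\{N=1\}$, which uses $\eta>0$) are correct and are the standard arguments.

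The triangle-inequality plan, however, has a real gap: expanding $(N(g)+N(h))^4$ binomially and ``matching pieces'' discards exactly the information that makes the inequality true. The horizontal cross term $\mathrm{Re}(\bar\zeta\zeta')$ appearing in $|\zeta+\zeta'|^4$ and the vertical cross term $\mathrm{Im}(\bar\zeta\zeta')$ appearing in the $\eta(\cdot)^2$ factor are coupled by $\mathrm{Re}(\bar\zeta\zeta')^2+\mathrm{Im}(\bar\zeta\zeta')^2=|\zeta|^2|\zeta'|^2$; if you estimate each by $|\zeta||\zeta'|$ separately (Cauchy--Schwarz, as you suggest) you overshoot, and after the naive term-by-term split you end up needing things like $4|\mathrm{Re}(\bar\zeta\zeta')||\zeta|^2+\eta|\mathrm{Im}(\bar\zeta\zeta')||z|\le 4N(g)^3N(h)$, for which the independent bounds only give the constant $4+\sqrt\eta$. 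So this is not just bookkeeping. The clean argument --- and the one that actually produces the threshold $16$ --- is to set $w(g):=|\zeta|^2+i\sqrt{\eta}\,z\in\C$, so that $N(g)^2=|w(g)|$, and compute from the group law that
\begin{align*}
w(gh)=w(g)+w(h)+2\,\mathrm{Re}(\bar\zeta\zeta')+i\,\tfrac{\sqrt\eta}{2}\,\mathrm{Im}(\bar\zeta\zeta').
\end{align*}
The complex triangle inequality gives $N(gh)^2\le|w(g)|+|w(h)|+\bigl|2\,\mathrm{Re}(\bar\zeta\zeta')+i\tfrac{\sqrt\eta}{2}\mathrm{Im}(\bar\zeta\zeta')\bigr|$, and when $\tfrac{\sqrt\eta}{2}\le 2$, i.e.\ $\eta\le 16$, the last modulus is at most $2|\bar\zeta\zeta'|=2|\zeta||\zeta'|\le 2N(g)N(h)$, yielding $N(gh)^2\le (N(g)+N(h))^2$. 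One more caveat: the proposition asserts only sufficiency of $\eta\le 16$; your remark that $16$ is ``the sharp threshold'' is not claimed by the statement, is not what this argument shows (it only shows where this particular absorption stops working), and you give no example demonstrating failure for $\eta>16$.
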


We will require $\eta$ to be sufficiently small.
We will fix $\eta$ in Note \ref{n:n2.4}.

For every $\lambda > 0$, we have the automorphism
\begin{align*}
  \delta_\lambda : \H &\to \H \\
  (x,y,z) &\mapsto (\lambda x,\lambda y,\lambda^2 z).
\end{align*}
Note that $\delta_\lambda$ scales the Koranyi metric, {\it i.e.}
\begin{align*}
  d\left( \delta_\lambda(g),\delta_\lambda(h) \right) = \lambda d(g,h).
\end{align*}

An important feature of the Heisenberg group is that there is a family of lines, called the horizontal lines, that are isometric to $\R$.  Before we define a horizontal line, we first define the horizontal elements of $\H$ to be those that are in the $xy$-plane.  For horizontal elements $(x,y,0) \in \H$, we can extend the parameter range of $\delta_\lambda$ to get $\delta_\lambda : (x,y,0) \mapsto (\lambda x,\lambda y,0)$ for any $\lambda \in \R$.  Then the horizontal lines of $\H$ are simply sets of the form $L = \{g \cdot \delta_t(h) : t \in \R\}$ when $g,h \in \H$ and $h$ is horizontal.  We can similarly define horizontal line segments.

There exists a projection homomorphism
\begin{align*}
  \pi : \H &\to \R^2 \\
  (x,y,z) &\mapsto (x,y).
\end{align*}
One can easily verify using the definition of the Koranyi norm that $\pi$ is 1-Lipschitz and even isometric when restricted to any horizontal line of $\H$.

We will also define the following map, which maps an element to the horizontal element ``below'' it:
\begin{align*}
  \tilde{\pi} : \H &\to \H \\
  (x,y,z) &\mapsto (x,y,0).
\end{align*}
Note that this is {\it not} a homomorphism.  We easily get that $N(\tilde{\pi}(g)) \leq N(g)$.

We need to define the following notion of horizontal interpolation.  Given $a,b \in \H$, we let
\begin{align*}
  \overline{ab} := \{a \delta_t \tilde{\pi}(a^{-1}b) : t \in [0,1]\}.
\end{align*}
Thus, for a subarc $\tau$ we have that $L_\tau = \overline{a(\tau)b(\tau)}$.

\begin{remark}\label{r:doesnt-hit}
  We stress that $\overline{ab}$ may not necessarily contain $b$ although it always contains $a$.  Indeed, $\overline{ab}$ is a horizontal line segment that starts from $a$ and goes in the horizontal direction of $a^{-1}b$.  Thus, $\overline{ab} \neq \overline{ba}$ unless $a$ and $b$ are co-horizontal.
\end{remark}

The following proposition, Proposition \ref{p:prop-4}, can be thought of as an improvement over the triangle inequality of the Koranyi metric.  The proposition is proven in Part B of the paper.  Other than the definitions above, the proof of Proposition \ref{p:prop-4} is independent of Part A.
This proposition is a curvature inequality in the Heisenberg group and should be thought of as a lower bound on the excess of the triangle inequality.
\begin{proposition}\label{p:prop-4}
  Let $\epsilon \in (0,1/2)$.  If $\eta \in \left(0, \left(\frac{\epsilon}{10}\right)^{10} \right)$, then for every $p_1,p_2,p_3,p_4 \in \H$ such that
  \begin{align}
    d(p_2,\{p_1,p_4\}) \geq \epsilon d(p_1,p_4), \notag \\
    d(p_3,\{p_1,p_4\}) \geq \epsilon d(p_1,p_4), \label{eps-lower-bound}
  \end{align}
  we have
  \begin{multline}
    d(p_1,p_2) + d(p_2,p_3) + d(p_3,p_4) - d(p_1,p_4) \geq \frac{\epsilon^4 \eta^2}{10^{14}\diam(\{p_1,p_2,p_3,p_4\})^3} \times \\
    \left(\max_{i \in \{1,2,3\}} \sup_{a \in \overline{p_ip_{i+1}}} d(a, \overline{p_1p_4})^4 \right). \label{curvature-ineq}
  \end{multline}
\end{proposition}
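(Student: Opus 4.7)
My plan is to reduce to a normalized setting and then carry out an explicit calculation with the Koranyi norm. By left-invariance of the metric I translate so that $p_1 = 0$; by the dilation $\delta_\lambda$ I rescale so that $d(p_1, p_4) = 1$; and by rotating in the $xy$-plane (a group automorphism that preserves the Koranyi norm) I place $p_4$ in the $xz$-plane. Let $E := d(p_1, p_2) + d(p_2, p_3) + d(p_3, p_4) - 1$ be the excess. If $E$ exceeds a small absolute constant the inequality is immediate, since $\diam(\{p_1, p_2, p_3, p_4\}) \leq 1 + E$ and the supremum on the right is at most the diameter. So I may assume $E$ is small and, consequently, the diameter is bounded by an absolute constant.

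The heart of the proof is to show that when $E$ is small, each horizontal interpolation segment $\overline{p_i p_{i+1}}$ stays close to $\overline{p_1 p_4}$. Suppose the maximum on the right-hand side is attained on $\overline{p_i p_{i+1}}$; after left-translating by $p_i$ I may assume $i = 1$. Using the explicit formula $N(x, y, z) = ((x^2 + y^2)^2 + \eta z^2)^{1/4}$, I would compute $d(p_1, p_2)$ and the distance from a point of $\overline{p_1 p_2}$ to the horizontal line containing $\overline{p_1 p_4}$, expressing both in terms of the coordinates of $p_2$ and of the horizontal direction $\tilde{\pi}(p_1^{-1} p_4)$. The other two cases will follow by the same argument after a single left-translation.

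The principal obstacle is the ``$\sqrt{\epsilon}$ phenomenon'' emphasized in the introduction: even when $p_2$ is close to $\overline{p_1 p_4}$, the horizontal segment $\overline{p_1 p_2}$ can bow away from $\overline{p_1 p_4}$ in the middle because its direction $\tilde{\pi}(p_1^{-1} p_2)$ may differ appreciably from that of $p_1^{-1} p_4$. I would therefore decompose the height $h$ into a \emph{displacement} contribution (how far $p_2$ itself is from $\overline{p_1 p_4}$) and an \emph{angular} contribution (the disagreement between the two horizontal directions), and bound each by $E$ separately. The displacement contribution is Pythagorean in spirit but, because the Koranyi norm is quartic in the vertical variable, it produces $E \gtrsim h^4/\diam^3$ rather than the Euclidean $h^2/\diam$. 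The angular contribution uses the lower bound $d(p_2, \{p_1, p_4\}) \geq \epsilon$ to guarantee that $d(p_1, p_2)$ is large enough that a large angular mismatch forces a noticeable extra cost in the sum $d(p_1,p_2)+d(p_2,p_3)+d(p_3,p_4)$. Carefully tracking constants through the Koranyi expansions produces the advertised factor $\epsilon^4 \eta^2/10^{14}$; the $\eta^2$ appears because the vertical coordinate is weighted by $\eta$ and enters twice (once in $N(p_1^{-1} p_2)$, once in the height $d(a, \overline{p_1 p_4})$), and the smallness hypothesis $\eta \leq (\epsilon/10)^{10}$ is exactly what is needed to absorb all cross-terms and lower-order corrections into the main term with room to spare.
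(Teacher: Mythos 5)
Your high-level strategy (normalize by left-translation, rotation, dilation; invoke the explicit Koranyi formula; separate a ``large excess'' case from a ``small excess'' case) matches the skeleton of the paper's proof, but there are genuine gaps in what you propose, and the hard parts of the argument are left unaddressed.

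First, your normalization $d(p_1,p_4) = 1$ creates a problem you do not confront: after rotating $p_4$ into the $xz$-plane as $(a,0,c)$ with $(a^4+\eta c^2)^{1/4}=1$, the horizontal projection length $a$ can be arbitrarily small, so $\overline{p_1p_4}$ can be a nearly degenerate segment. In that regime the ``angular vs.\ displacement'' decomposition you describe loses its meaning, because there is no meaningful baseline horizontal direction to compare against. The paper instead normalizes the \emph{projection} so that $p_4 = (1,0,t)$ with $t$ unconstrained, and then splits explicitly into the ``very vertical'' case $d(p_1,p_4) > 100/\epsilon^2$ and its complement (Cases 2.1 and 2.2). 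The vertical case requires a separate argument exploiting the square-root behavior of the Koranyi metric in the $z$-direction; nothing in your sketch supplies this.

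Second, your claim that the case $i=2$ (the segment $\overline{p_2p_3}$ between the two \emph{middle} points) ``will follow by the same argument after a single left-translation'' does not hold up. Translating by $p_2^{-1}$ moves $\overline{p_1p_4}$ off the origin, so the structure you set up for $\overline{p_1p_2}$ does not carry over. The paper handles $\overline{p_2p_3}$ in a qualitatively different way: it bounds $\sup_{a\in\overline{p_2p_3}}d(a,\overline{p_1p_4})^4$ coordinatewise (its inequality \eqref{listed-bound}), then uses the elementary triangle-inequality reduction $d(p_1,p_2)+d(p_2,p_3)+d(p_3,p_4)-d(p_1,p_4) \geq \max_{i\in\{2,3\}}\bigl(d(p_1,p_i)+d(p_i,p_4)-d(p_1,p_4)\bigr)$ to collapse the four-point problem to two \emph{three-point} inequalities \eqref{q1-curvature-ineq}--\eqref{q2-curvature-ineq}. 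That reduction is a key structural step your outline is missing.

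Finally, even granting the setup, your proposal stops at ``carefully tracking constants.'' In the paper those ``constants'' are the entire content: the two reverse-Minkowski decompositions, the auxiliary Lemmas \ref{concave-power} and \ref{power-curvature}, and roughly a dozen subcases keyed to whether $p_2$ is vertically near $p_1$, vertically near $p_4$, or vertically far from both, and whether the planar projection of $\{p_1,p_2,p_4\}$ is nearly collinear. The smallness hypothesis $\eta < (\epsilon/10)^{10}$ is threaded through essentially every one of those subcases. As written, your proposal is a direction, not a proof, and its two concrete structural claims (that a single normalization $d(p_1,p_4)=1$ suffices, and that the middle segment is handled ``by the same argument'') are where it would fail if pushed further.
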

The condition \eqref{eps-lower-bound} says that the middle two points $p_2,p_3$ are not too close to the endpoints $p_1,p_4$.
We  remind the reader that the Koranyi metric $d$ in Proposition \ref{p:prop-4} above depends on the quantity $\eta$.  
\begin{note}\label{n:n2.4}
 From here on, we will fix $\eta=2^{-1200} < 2^{-1160}/10^{10}$, so that we may use the above proposition with $\epsilon=2^{-116}$, as is needed in its (only) application, which is to prove Lemma \ref{l:modified-prop-4}.
We remark that this choice of $\epsilon$ will be made in Lemma  \ref{l:modified-prop-4} when we set $\epsilon=\delta 2^{-J-6}$, where $\jump=100$ and $\delta = 2^{-10}$ are the parameters which  appear in Lemma \ref{l:build-filtration-arcs}.  We refer the reader to Note \ref{n:j-delta-const} for explanation of the setting of the $\jump$ and $\delta$ constants.  Note that having this fixed choice of $\eta$ also means of course that we have a fixed Koranyi metric $d$, which is the metric that will be used for the rest of Part A. In particular when we say `distance', `arc length' etc., these will be measured with respect to this Koranyi metric unless otherwise stated.  We remind the reader that proving Theorem I for this metric also proves it for the Carnot-Carath\'eodory metric, up to a constant that can be bounded by the biLipschitz distortion of the two metrics.
\end{note}


Another important feature of the Heisenberg group is that it is geometrically doubling.  Recall that a metric space $(X,d_X)$ is said to be geometrically doubling if there exists a constant $M \geq 1$ so that for every metric ball $B(x,r)$ can be covered by a set of no more than $M$ balls of half the radius
\begin{align*}
  B(x,r) \subseteq \bigcup_{i=1}^M B(y_i,r/2).
\end{align*}
Indeed, the Lebesgue measure of $\R^3$ is a Haar measure of $\H$.  This follows as group translations in $\H$ are affine transformations of $\R^3$ with determinant 1.  One can then see by looking at the anisotropic scaling of $\delta_\lambda$ that the volume of balls grows like $|B(0,r)| = cr^4$, which have polynomial growth.  A standard argument then shows that $\H$ must also be geometrically doubling.  It is well known that the Hausdorff dimension of $\H$ is 4 and that the 4-Hausdorff measure $\cH^4$ is also a Haar measure of $\H$.  Thus, $\cH^4$ is a constant multiple of the Lebesgue measure.

\subsection{Reduction to a special multiresolution}\label{s:special-multires}
We say that a set $\net$ is an {\it $\epsilon$ separated set} if whenever $x,y\in \net$ we have $\dist(x,y)\geq \epsilon$.
We say that a set $\net\subset K$ is an $\epsilon$ separated net {\it for $K$} if $\net$ is an $\epsilon$ separated set and for any $z\in K$ we have $x\in \net$ such that $\dist(x,z)<\epsilon$. 
For a given set $K$ and constant $A\geq 2$ we define a {\it multiresolution} $\allballs$  for $K$ as follows.  Let $\net_n$ be a $2^{-n}$ separated net for $K$ and assume that
$\net_{n+1}\supset\net_n$.  We then let
\begin{align*}
\allballs:=\{\ball(x,A2^{-n}):x\in\net_n,\ \ n \in \bZ\}\,.
\end{align*}
When it is important for us to emphasize $K$ we will write $\allballs^K$.  We will always omit $A$ from the notation, but remark that we will consider $A>2$ a fixed number (see Note \ref{n:fix-a}). We will refer to  $A$ as the {\it implied constant} of the multiresolution.
\begin{remark}
If the diameter of $K$ is, say, 1, we may construct a multiresolution for $K$ by choosing a single point for $\net_i$ where $i\leq 0$, and for $i>0$, choosing $\net_i$   inductively by taking  a max separated net. 
\end{remark}

We will show 
Theorem \ref{t:maintheorem-INT}  via the following lemma.

%
%
%
%
%
\begin{lemma}\label{l:equiv-thms}
Let $A>2$ be given.  
Let $K\subset\Gamma$ and let   $\allballs$ be defined using $K$ and implied constant $A$. 
If there is a constant $C_1<\infty$ such that 
\eqref{e:integral} holds, then  \eqref{e:allbals} holds, where $C_2<\infty$ depends only on $A$ and  $C_1$.
Conversly,
If there is a constant $C_2<\infty$ such that \eqref{e:allbals} holds for $K=\Gamma$, then
\eqref{e:integral} holds for a constant $C_1<\infty$ which depends only on $C_2$.

\begin{align}\label{e:integral}
\int\limits_{\H} \int\limits_0^{+\infty} \beta_\Gamma(B(x,t))^4\frac{dt}{t^4}d\cH^4(x) 
\leq C_1\cH^1(\Gamma).  
\end{align}

\begin{align}\label{e:allbals}
\sum\limits_{B\in \allballs} \beta_\Gamma(B)^4\diam(B)\leq C_2\cH^1(\Gamma). 
\end{align}

\end{lemma}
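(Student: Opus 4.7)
The lemma is the standard equivalence between the integral and sum formulations of the $\beta$-number inequality, and the proof is a discretization argument resting on one elementary monotonicity: if $B\subset B'$ are balls in $\H$, then
$$\beta_\Gamma(B)\diam(B) \leq \beta_\Gamma(B')\diam(B'),$$
since the infimum defining each side ranges over all horizontal lines while the supremum on the left is taken over the smaller set $\Gamma\cap B$. Equivalently $\beta_\Gamma(B) \leq (\diam(B')/\diam(B))\beta_\Gamma(B')$. The plan is to transfer this monotonicity back and forth between the measure $d\cH^4\,dt/t^4$ on $\H\times(0,\infty)$ and the counting measure on $\allballs$ weighted by diameter.

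For the first direction (integral $\Rightarrow$ sum, with $K\subset\Gamma$ arbitrary), I would associate to each $B_0=B(x_0,r_0)\in\allballs$ of scale $r_0=A2^{-n}$ the product region
$$R_{B_0} := B(x_0, 2^{-n-1}) \times [2r_0, 3r_0] \subset \H\times(0,\infty).$$
For every $(x,t)\in R_{B_0}$ one has $d(x,x_0)+r_0<2r_0\leq t$, hence $B_0\subset B(x,t)$ and the monotonicity gives $\beta_\Gamma(B(x,t))\geq \beta_\Gamma(B_0)/3$. The family $\{R_{B_0}\}$ is pairwise disjoint: within a fixed scale $n$, the spatial slabs have radius $2^{-n-1}$ while $\net_n$ is $2^{-n}$-separated; across consecutive scales the $t$-intervals $[2A2^{-n},3A2^{-n}]$ are disjoint. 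A direct computation gives $\cH^4(B(x_0,2^{-n-1}))\int_{2r_0}^{3r_0}t^{-4}\,dt\geq c(A)\diam(B_0)$, so integrating the pointwise lower bound on each $R_{B_0}$ and using disjointness yields
$$\sum_{B_0\in\allballs}\beta_\Gamma(B_0)^4\diam(B_0) \leq C(A)\int_\H\int_0^\infty \beta_\Gamma(B(x,t))^4\,\frac{dt}{t^4}\,d\cH^4(x)\leq C(A)C_1\cH^1(\Gamma).$$

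For the converse (where $K=\Gamma$, so $\net_n$ is a $2^{-n}$-net for all of $\Gamma$), I would reverse the assignment. Given $(x,t)$ with $B(x,t)\cap\Gamma\neq\emptyset$, fix $z\in\Gamma\cap B(x,t)$ and let $n$ be the integer with $2^{-n-1}<t\leq 2^{-n}$; there is $x_0\in\net_n$ with $d(z,x_0)<2^{-n}$, so $d(x,x_0)<2\cdot 2^{-n}$, and for $A\geq 3$ this forces $B(x,t)\subset B_0:=B(x_0,A2^{-n})$. Monotonicity then gives $\beta_\Gamma(B(x,t))\leq 2A\,\beta_\Gamma(B_0)$. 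Replacing the maximum over admissible centers $x_0\in\net_n\cap B(x,2\cdot 2^{-n})$ by a sum (the number of such centers is $O(1)$ by geometric doubling of $\H$), integrating over $t\in(2^{-n-1},2^{-n}]$ and $x\in\H$ (the integrand vanishes when $B(x,t)\cap\Gamma=\emptyset$), and summing over $n\in\Z$ produces
$$\int_\H\int_0^\infty \beta_\Gamma(B(x,t))^4\,\frac{dt}{t^4}\,d\cH^4(x) \leq C(A)\sum_{B\in\allballs}\beta_\Gamma(B)^4\diam(B) \leq C(A)C_2\cH^1(\Gamma).$$

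I do not anticipate any serious obstacle; the only care required is bookkeeping the implied constant $A$ and verifying the disjointness of the $R_{B_0}$ in the forward direction and bounded overlap in the converse (the latter coming directly from geometric doubling of $\H$). The hypothesis $K=\Gamma$ enters the converse only because otherwise a point $(x,t)$ with $B(x,t)$ meeting $\Gamma$ but far from $K$ cannot be matched with any nearby ball of $\allballs^K$.
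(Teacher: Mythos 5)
Your proof is correct and mirrors the paper's (very terse) argument: one discretizes the double integral using the monotonicity $\beta_\Gamma(B)\diam(B)\leq\beta_\Gamma(B')\diam(B')$ for $B\subset B'$, the geometric doubling of $\H$, and the growth $\cH^4(B(\cdot,r))\approx r^4$, exactly as you do with the disjoint product regions $R_{B_0}$ for one direction and the bounded-overlap assignment for the other. The only small blemish is the restriction $A\geq 3$ in your converse step, whereas the hypothesis allows any $A>2$; this is repaired by pairing $(x,t)$ with $2^{-n-1}<t\leq 2^{-n}$ to a net point $x_0\in\net_{n-1}$ rather than $\net_n$, which gives $B(x,t)\subset B(x_0,2\cdot 2^{-(n-1)})\subset B(x_0,A2^{-(n-1)})$ for every $A>2$ — and in any case the paper fixes $A=10$ immediately afterward, so the point is harmless.
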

\begin{remark}
Eq. \eqref{e:allbals} will be shown with $C_2$ depending on $A$.  As an application of the above lemma  we will get \eqref{e:integral}  with $C_1$ depending on $A$.
\end{remark}

\begin{proof}
Let $x\in\H$ and $t\in[2^{-n-1},2^{-n})$, where $n\in \bZ$. 
If $K\cap \ball(x,t)\neq\emptyset$, then there is $z\in \net_n$ with $\dist(z,x)\leq 2^{-n}+t\leq 2^{1-n}<4t$.
Thus $\ball(x,t)\subset \ball(z,2^{2-n})\subset \ball(x,16t)$.
The lemma now reduces to a discretization of the double integral; this follows from a standard argument and the fact that 
for $\alpha\geq1$, we have $\beta_K(B) \leq \alpha \beta_K(\delta_\alpha(B))$,
as well as that the measure $\cH^4(\ball(\cdot,r))$ grows like a fixed constant times $r^4$.

\end{proof}

\begin{note}\label{n:fix-a}
For concreteness, we now fix $A=10$. Any constant $>2$ would suffice.
\end{note}

The remainder of this paper will be devoted to showing that for any $K\subset \Gamma$, which gives rise to $\allballs$, we have
\begin{align}
\sum\limits_{B\in \allballs} \beta_\Gamma(B)^4\diam(B)\leq C\cH^1(\Gamma).  \label{e:sum-beta-upper-bound}
\end{align}
for $C$ depending only on $A=10$ (and not on $K$ or the choice of $\allballs$).

\subsection{Metric space preliminaries}

Our definition of  $\beta_\Gamma(B)$ is scale independent in the sense that $\beta_{\delta_\alpha(\Gamma)}(\delta_\alpha(B)) = \beta_\Gamma(B)$.  As a corollary we get that we may suppose
without loss of generality that $\diam(\Gamma) = 1$ and the following lemma.
Let 
$$\smallballs= \left\{B\in\allballs:\rad(B)<\frac1{100}\right\}.$$
\begin{lemma}
 There exists some constant $C > 0$ depending only on the ambient metric space so that
 \begin{align*}
\sum\limits_{B\in \allballs\setminus\smallballs} \beta_\Gamma(B)^4\diam(B)\leq C\cH^1(\Gamma). 
\end{align*}
\end{lemma}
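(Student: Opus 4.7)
Both sides of the desired inequality are invariant under the anisotropic dilations $\delta_\lambda$: indeed $\beta_{\delta_\lambda\Gamma}(\delta_\lambda B)=\beta_\Gamma(B)$, while $\diam(\delta_\lambda B)=\lambda\diam(B)$ and $\cH^1(\delta_\lambda\Gamma)=\lambda\cH^1(\Gamma)$. So my plan is to rescale to $\diam(\Gamma)=1$. After this normalization $\Gamma$ is connected (being the Lipschitz image of an interval), so $\cH^1(\Gamma)\ge\diam(\Gamma)=1$, and it suffices to bound the left side by an absolute constant (depending only on $A$ and the doubling constant of $\H$).

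The balls in $\allballs\setminus\smallballs$ have radius $r=A 2^{-n}\ge 1/100$, hence $n\le n_{\max}:=\lfloor\log_2(100A)\rfloor$. I would split this range into \emph{intermediate scales} $0<n\le n_{\max}$ and \emph{large scales} $n\le 0$. At intermediate scales there are only $O(1)$ values of $n$ and, since $\net_n\subset K$ is a $2^{-n}$-separated subset of a set of diameter at most $1$, the doubling of $\H$ bounds $|\net_n|$ by a constant. Combining this with the trivial bounds $\beta_\Gamma(B)\le 1$ and $\diam(B)\le 2A$ yields an absolute constant bound for this part of the sum.

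For large scales $n\le 0$ we have $2^{-n}\ge 1\ge\diam(K)$, which forces $|\net_n|=1$ (any two distinct $2^{-n}$-separated points of $K$ would overflow $K$), so there is exactly one ball per scale. Its radius $r=A 2^{-n}\ge A$ exceeds $\diam(\Gamma)$, so $\Gamma\subset B$. I would then pick any $p\in\Gamma$ and a horizontal line $L$ through $p$ (take $g=p$ and any nonzero horizontal $h$ in the description $L=\{g\cdot\delta_t(h):t\in\R\}$ from Section~2.1). Then $d(y,L)\le d(y,p)\le\diam(\Gamma)=1$ for every $y\in\Gamma$, so $\beta_\Gamma(B)\le 1/\diam(B)=1/(2r)$ and hence
$$\beta_\Gamma(B)^4\diam(B)\le \frac{1}{8r^3}.$$
Summing over $n\le 0$ gives the convergent geometric series $\tfrac{1}{8A^3}\sum_{n\le 0} 2^{3n}$, an absolute constant.

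Adding the two pieces and using $\cH^1(\Gamma)\ge 1$ completes the proof. I do not anticipate any genuine obstacle here: the statement amounts to a normalization plus doubling bookkeeping, made easy by the fact that a horizontal line through any point is readily available and that $\beta_\Gamma(B)$ is crudely controlled by $1/\diam(B)$ as soon as the ball engulfs the whole curve.
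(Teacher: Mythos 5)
Your proof is correct and is essentially the argument the paper delegates to Lemma 3.9 of \cite{Schul-TSP}: normalize $\diam(\Gamma)=1$, control the finitely many intermediate scales by doubling and the trivial bound $\beta_\Gamma(B)\le 1$, and at the scales where $\Gamma\subset B$ use $\beta_\Gamma(B)\le\diam(\Gamma)/\diam(B)$ so that $\beta_\Gamma(B)^4\diam(B)$ sums as a geometric series, closing with $\cH^1(\Gamma)\ge\diam(\Gamma)=1$. One small slip: at $n=0$ you cannot conclude $|\net_0|=1$ when $\diam(K)=1$ (two points of $K$ could lie at distance exactly $1$), but this is harmless since $n=0$ can simply be absorbed into your intermediate-scale bookkeeping where cardinality is bounded by doubling.
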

For a proof of this see the proof of Lemma 3.9 in \cite{Schul-TSP}, where this is shown with a power $2$ rather than a power $4$. Again, the proof there is for a Hilbert space, but holds for any other metric space.
 
The following preliminary remarks hold for any rectifiable curve $\Gamma$ in a metric space.
\begin{lemma} \label{ell-H1}
If $\Gamma$ has $\cH^1(\Gamma)<\infty$ and is connected, then there is a 1-Lipschitz function $\gamma:\bT\to \Gamma$ which is surjective.  Here, $\bT$ is a circle in $\R^2$ of circumference $32\cH^1(\Gamma)$.
\end{lemma}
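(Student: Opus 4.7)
The plan is to build $\gamma$ by first producing a $1$-Lipschitz closed-loop parametrization of $\Gamma$ on an interval of length at most $2\cH^1(\Gamma)$, and then embedding that loop into $\R^2$ as a Euclidean circle of circumference $32\cH^1(\Gamma)$.

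Since $\Gamma$ is a rectifiable curve in the paper's setting, it is in particular compact and path-connected with $\cH^1(\Gamma)<\infty$. The core of the argument is a classical approximation scheme. For each $n$, I would pick a maximal $2^{-n}$-separated net $\net_n\subset\Gamma$ (finite by compactness), form the graph on $\net_n$ joining pairs within distance $4\cdot 2^{-n}$ (which is connected because $\Gamma$ is path-connected), and extract a spanning tree $T_n$ whose total edge length is at most $\cH^1(\Gamma)$ by a standard covering estimate. An Eulerian double-traversal of $T_n$ uses each edge exactly twice and automatically closes up; linearly interpolating at unit speed along this walk yields a piecewise-linear $1$-Lipschitz closed loop $\phi_n:[0,L_n]\to\Gamma$ with $L_n\leq 2\cH^1(\Gamma)$, $\phi_n(0)=\phi_n(L_n)$, and image that is $O(2^{-n})$-dense in $\Gamma$. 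Rescaling each $\phi_n$ to the common domain $[0,2\cH^1(\Gamma)]$ and extracting a uniformly convergent subsequence via Arzel\`a--Ascoli then gives a $1$-Lipschitz closed loop $\phi:[0,2\cH^1(\Gamma)]\to\Gamma$, surjective because the nets densify and $\Gamma$ is closed.

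Finally, I would pass to a Euclidean circle. The map $\phi$ descends to a $1$-Lipschitz surjection from the intrinsic arc-length circle of circumference $2\cH^1(\Gamma)$ onto $\Gamma$. Taking $\bT\subset\R^2$ a Euclidean circle of circumference $C=32\cH^1(\Gamma)=16\cdot 2\cH^1(\Gamma)$ and defining $\gamma(t):=\phi(t/16)$, the inequality $d_{\mathrm{arc}}^{\bT}\leq (\pi/2)\,d_{\R^2}^{\bT}$, valid on any Euclidean circle, gives
\[
d(\gamma(t),\gamma(t'))\;\leq\;\tfrac{1}{16}\,|t-t'|_{\mathrm{arc}}^{\bT}\;\leq\;\tfrac{\pi}{32}\,|t-t'|_{\R^2}\;<\;|t-t'|_{\R^2},
\]
so $\gamma$ is the desired $1$-Lipschitz surjection from $\bT$ onto $\Gamma$.

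The main obstacle is the length bound on the spanning tree $T_n$ in the approximation step: establishing that its total edge length is $O(\cH^1(\Gamma))$ requires a careful covering argument that exploits both the connectedness of $\Gamma$ and the finiteness of $\cH^1(\Gamma)$. The factor $32$ in the statement is far more generous than needed for this construction and easily absorbs any non-sharp constants arising along the way.
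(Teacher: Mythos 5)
Your proposal is essentially correct and takes the same route as the paper: the paper does not prove the lemma itself but defers to the appendix of \cite{Schul-TSP}, which uses the same scheme of nets, a tree, an Euler double-traversal, and an Arzel\`a--Ascoli limit. The only point worth tightening is the interpolation step. Piecewise-linear (or even geodesic) interpolation between consecutive net points generally leaves $\Gamma$, so as written $\phi_n:[0,L_n]\to\Gamma$ is not quite correct; one must either (a) observe that the limit $\phi$ nevertheless has image in $\Gamma$ because the approximants are uniformly $O(2^{-n})$-close to the compact set $\Gamma$, or (b) --- the cleaner route, and the one in the cited reference --- join the net points by arcs lying \emph{inside} $\Gamma$ (a continuum of finite $\cH^1$ is arcwise connected). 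Option (b) also repairs your length accounting: with straight edges of length $\le 4\cdot 2^{-n}$ and $|\net_n|\lesssim 2^n\cH^1(\Gamma)$, the spanning tree has total edge length $\le 8\cH^1(\Gamma)$, not $\le\cH^1(\Gamma)$; using essentially disjoint arcs inside $\Gamma$ gives $\cH^1(T_n)\le\cH^1(\Gamma)$ outright and hence $L_n\le 2\cH^1(\Gamma)$ as you wanted. Either way the slack is swallowed by the factor $32$, as you note, and the rest of the argument (Arzel\`a--Ascoli, surjectivity from densifying nets plus closedness of $\Gamma$, and the chord-vs-arclength comparison on the Euclidean circle) is fine.
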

For a proof, see, for example, the appendix of \cite{Schul-TSP}, where results are stated for the case of a Hilbert space there, but are valid for a compact metric space.
We will fix one such parametrization and call it $\gamma$.  We will also fix a direction of flow along $\bT$ so that we can talk about a linear ordering for any proper subarc.  We will assume without loss of generality that this is an arclength parametrization, reducing the circumference of $\bT$ if needed.


\subsection{Balls, cubes, nesting}\label{s:balls-cubes}

For parameters $C>0$ and $n_0\geq 1$, 
let $\cB$ be  a collection of balls of the form
$$\{\ball(x,C2^{-n}):x\in Y_n,\ \ n \geq n_0\}$$
where $Y_n\subset \Gamma$ is  a $2^{-n}$ separated set, {\it i.e.} $\dist(x,y)\geq 2^{-n}$ for every two distinct points $x,y\in  Y_n$.
Let $J \geq 1$ be an integer and $\kappa > 0$ be given.

We may write 
$\cB=\bigcup_{i=1}^{D'} \cB_i$, so that the collections $\cB_i\cap \cB_j=\emptyset$ if $i\neq j$, and for any $i$ and any two distinct balls $B_1,B_2\in \cB_i$ of the same radius $r$, we have $\dist(B_1,B_2)>\kappa r$.  Furthermore, for any two $B_1,B_2\in \cB_i$, we have that $\rad(B_1)/\rad(B_2) \in 2^{\jump\Z}$.
\begin{lemma}\label{l:number-of-families}
For $\H$ (or any doubling metric space for that matter), we may take $D' = D(C,\kappa)J$ where $D$ is some finite number depending only on $C$ and $\kappa$.
\end{lemma}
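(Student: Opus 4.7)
The plan is a two-step partition, first separating by scale (to control radii ratios) and then separating at each scale by a greedy coloring (to enforce the same-radius separation).

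First I would partition $\cB$ by residue class of $n$ modulo $\jump$: for each $k \in \{0, 1, \ldots, \jump-1\}$, let
\[
\cB^{(k)} := \{\ball(x, C2^{-n}) \in \cB : n \equiv k \pmod{\jump}\}.
\]
Within any single $\cB^{(k)}$, any two radii $C 2^{-n_1}$ and $C 2^{-n_2}$ satisfy $n_1 - n_2 \in \jump\Z$, so $\rad(B_1)/\rad(B_2) \in 2^{\jump \Z}$. This handles the second requirement.

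Next, I would subdivide each $\cB^{(k)}$ further to enforce the separation. Fix a level $n$ (with $n \equiv k \pmod \jump$) and let $Y_n' \subset Y_n$ be the centers of balls of radius $C2^{-n}$ in $\cB^{(k)}$. Define a graph $G_n$ on $Y_n'$ by $x \sim y$ iff $0 < \dist(x,y) \leq \kappa C 2^{-n}$. Using the doubling property of $\H$, the ball $\ball(x, \kappa C 2^{-n})$ can be covered by at most $M_0^{\lceil \log_2(2\kappa C) \rceil}$ balls of radius $2^{-n-1}$, where $M_0$ is the doubling constant of $\H$. Each such ball of radius $2^{-n-1}$ meets $Y_n'$ in at most one point, since $Y_n'$ is $2^{-n}$-separated. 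Hence every vertex of $G_n$ has degree at most some $M = M(C,\kappa)$ depending only on $C$ and $\kappa$. A standard greedy coloring then produces a partition of $Y_n'$ into at most $M+1$ classes with the property that two points in the same class satisfy $\dist(x,y) > \kappa C 2^{-n} = \kappa r$. Performing this coloring independently at each level $n$ in the residue class and then grouping together balls whose centers received the same color gives a decomposition
\[
\cB^{(k)} = \bigsqcup_{i=1}^{M+1} \cB^{(k)}_i,
\]
in which same-radius balls within $\cB^{(k)}_i$ have centers $>\kappa r$ apart, so are at distance $>\kappa r$ apart as sets.

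Concatenating over $k$ produces $D' = \jump(M+1) = D(C,\kappa)\jump$ subfamilies, as required. There is no real obstacle: the only nontrivial ingredient is verifying the bounded-degree estimate from doubling, and the rest is bookkeeping between the residue-class partition (which handles the radii-ratio condition uniformly) and the greedy coloring (which handles the separation condition one scale at a time).
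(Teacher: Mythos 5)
Your two-step decomposition -- first partitioning by residue class of $n$ modulo $\jump$ to control the radii ratios, then using a doubling/bounded-degree greedy coloring at each scale to enforce separation -- is exactly the structure of the paper's (very terse) proof; you have simply filled in the second step explicitly. However, there is a small but genuine error at the end. You define adjacency in $G_n$ by $\dist(x,y) \leq \kappa C 2^{-n}$ and then conclude that two same-radius balls whose centers receive the same color satisfy $\dist(B_1,B_2) > \kappa r$ as sets. That inference is false: having centers more than $\kappa r$ apart only gives $\dist(B_1,B_2) > (\kappa - 2)r$ for balls of radius $r$. The $\dist$ in the paper's statement is the set distance (this is what is used when passing to the cubes $Q(B)$ in Lemma \ref{cube-properties-1}(3), where a further loss of order $2^{-\jump+2}r$ brings it down to $(\kappa-1)r$), so center separation of $\kappa r$ is not enough. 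The fix is trivial -- declare $x \sim y$ when $\dist(x,y) \leq (\kappa+2)C2^{-n}$, so that same-color same-radius balls satisfy $\dist(B_1,B_2) > \kappa r$; the degree bound from doubling still holds with a constant depending only on $C$ and $\kappa$. With that adjustment the argument is complete and matches the paper's intent.
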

\begin{proof}
First, write $\cB=\cB^1 \cup....\cup \cB^J$ where $\cB^i=\{B\in \cB: \rad(B) \in C2^{i+J\bZ}\}$.
Next, write for each $i\in \{1,...,J\}$, $\cB^i=\cB^i_1\cup...\cup\cB^i_D$, where $D<\infty$ depends on $\kappa$ and $C$  and exists since $\H$ is a  doubling metric space. Thus we may take $D'=D(C,\kappa) J$.
\end{proof}

Fix a $\cB_i$ as above, and call it $\cB'$.
We will now construct a set of dyadic-like ``cubes'', one for each $B \in \cB'$, in the spirit of Christ and David  \cite{Ch,Da}.  We give the construction for one such $B \in \cB'$.  First let
$\child_0 :=\{B\}$.  For $i\geq 0$, we then set $Q_i = \bigcup_{j=0}^i \left(\bigcup \child_j\right)$ as a subset of $\H$ and write
$$\child_{i+1}:=\{B'\in \cB': B'\cap Q_i \neq \emptyset, \ \ \rad(B')\leq\rad(B)\}.$$
We let
$$Q = \bigcup_{i \geq 0} Q_i.$$
We have the following properties.

\begin{lemma} \label{cube-properties-1}
For sufficiently large $\jump \geq 100$ we have the following
\begin{enumerate}
\item 
$B\subset Q\subset (1+2^{-\jump+2})B$.
\item 
Let $Q$ and $Q'$ be two cubes that are constructed from $B$ and $B'$ of $\cB'$, respectively, as above.  If $Q \cap Q' \neq \emptyset$ and $\rad(B) > \rad(B')$, then
$Q' \subseteq Q$. 
\item
If $B_1,B_2\in\cB'$, are of the same radius $r$, then 
$\dist(Q(B_1),Q(B_2))>(\kappa-1)r$.
\end{enumerate}
\end{lemma}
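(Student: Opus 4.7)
The plan is to treat the three parts sequentially: part (1) via a chain argument exploiting the structure of $\cB'$, part (2) via an absorption principle, and part (3) as a consequence of (1). The key structural observation underlying everything is that because balls in $\cB'$ have radii in $2^{\jump\Z}$ and distinct same-radius balls are $\kappa r$-separated, the only ball of radius $\rad(B)$ in $\cB'$ that intersects $B$ is $B$ itself; hence every $B' \in Q \setminus \{B\}$ has $\rad(B') \leq 2^{-\jump}\rad(B) =: r_1$.

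For part (1), the inclusion $B \subset Q$ is immediate from $\child_0 = \{B\}$. For the upper bound $Q \subset (1+2^{-\jump+2})B$, I would unfold the recursive definition of $\child_d$ backward: every $B' \in Q$ admits a chain $B = C_0, C_1, \ldots, C_m = B'$ in $\cB'$ with $C_i \cap C_{i-1} \neq \emptyset$ and each $\rad(C_i) \leq \rad(B)$. The triangle inequality along the chain gives that every point of $B'$ lies within distance $\rad(B) + 2\sum_{i=1}^m \rad(C_i)$ of the center of $B$, so the task reduces to showing one can choose a chain with $\sum_i \rad(C_i) \leq 2 r_1$. Writing $r_k = 2^{-k\jump}\rad(B)$, this bound matches the geometric series $\sum_{k \geq 1} r_k = r_1/(1-2^{-\jump})$. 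The combinatorial input is that consecutive chain balls intersect, and hence must have different radii (since same-radius balls are $\kappa r$-separated), while the scale gap $2^{\jump}$ prevents the chain from accumulating much total radius at any single scale; for $\jump$ sufficiently large relative to $\kappa$, the sum is then dominated by the geometric series.

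For part (2), given $Q \cap Q' \neq \emptyset$ with $\rad(B) > \rad(B')$, take any $B'' \in Q'$; then $\rad(B'') \leq \rad(B') < \rad(B)$, so $B''$ is eligible for the $Q$-construction. Concatenating a chain inside $Q$ from $B$ to a ball meeting a common point $p \in Q \cap Q'$, the one-step link to a ball of $Q'$ also containing $p$, and a chain inside $Q'$ from that ball (through $B'$) to $B''$, one obtains a chain of $\cB'$-balls of radii $\leq \rad(B)$ from $B$ to $B''$ with consecutive overlaps; inducting along this chain gives $B'' \in Q$, so $Q' \subset Q$. For part (3), applying (1) yields $Q(B_i) \subset (1+2^{-\jump+2})B_i$, and two such enlarged balls whose originals are $\kappa r$-separated have set-distance at least $\kappa r - 2 \cdot 2^{-\jump+2} r = (\kappa - 2^{-\jump+3})r \geq (\kappa - 1)r$ once $\jump \geq 3$.

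The main obstacle is the geometric-series estimate in part (1). A naive per-generation bound would contribute up to $2 r_1$ at each of $\child_1, \child_2, \ldots$, giving a total that grows unbounded with depth; the uniform bound $\sum_i \rad(C_i) \leq 2r_1$ along a well-chosen chain must instead be extracted by carefully exploiting both the lacunary scale structure (radii live on the geometric ladder $2^{-\jump\Z}$) and the $\kappa r$-separation within each scale, in order to rule out chains that drift outward through many balls at the same scale.
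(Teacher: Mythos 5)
Your parts (2) and (3) are correct and in the same spirit as the paper: the argument for (2) is exactly the ``absorption'' principle the paper invokes (every ball in $\cB'$ of radius $\leq\rad(B)$ that meets $Q(B)$ is swallowed by the construction, and one propagates this along a connecting chain of $Q'$), and (3) is the routine consequence of (1) and the $\kappa r$-separation of $\cB'$. For part (1) your overall strategy --- unfold the construction into a chain $B=C_0,\dots,C_m=B'$ of overlapping balls and bound $\sum_i\rad(C_i)$ --- is also the approach behind the result the paper delegates to (Lemma~2.16 of \cite{Schul-AR}).

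The gap is the one you flag yourself: you do not prove $\sum_i\rad(C_i)\leq 2r_1$, and the ingredients you invoke do not, as stated, deliver it. Knowing that consecutive chain balls have different radii and that radii live on the lacunary ladder $2^{\jump\Z}$ does not yet rule out a chain whose radii oscillate (e.g.\ $r_1,r_2,r_1',r_2',\dots$ visiting many distinct scale-$r_1$ balls), which would make the sum grow linearly in the chain length. The missing observation is sharper: along a chain of overlapping balls, the separation hypothesis forces the radii to be \emph{effectively strictly decreasing}. Concretely, if $C_i$ and $C_j$ with $i<j$ are two distinct $\cB'$-balls of the same radius $r$, then $d(C_i,C_j)>\kappa r$; but if every intermediate $C_l$ ($i<l<j$) has radius $<r$, hence $\leq 2^{-\jump}r$, the chain forces $d(C_i,C_j)\leq 2\sum_{l=i+1}^{j-1}\rad(C_l)$, and for $\jump$ large this is far below $\kappa r$ unless $j-i$ is enormous --- and one must then push this to an actual impossibility (e.g.\ by induction on the largest radius appearing between the two, or by the argument in \cite{Schul-AR}). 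Until that step is carried out, the claim ``a well-chosen chain has $\sum_i\rad(C_i)\leq 2r_1$'' is an assertion, not a proof, and (1) --- together with (3), which depends on it --- is not established. The paper sidesteps this by citing \cite{Schul-AR}; a self-contained proof must supply this monotonicity/non-return argument.
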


\begin{proof}
Property (1): See  Lemma 2.16 in \cite{Schul-AR}. 
Property (2):  
If $Q\cap Q'\neq\emptyset$ then  one of the balls making up $Q'$ intersects $Q$.  
It follows from the construction of $Q(B)$ that any balls of radius at most $\rad(B)$ that intersect $Q(B)$ will be contained in $Q(B)$.
As $\rad(B') < \rad(B)$, all the balls making up $Q'$ will be less than $\rad(B)$.  Thus, they will eventually be absorbed into $Q(B)$ during the construction.
Property (3): follows from the similar property of $\cB'$ together with (1).
\end{proof}

We will call the resulting family of ``cubes'' $Q$ associated to balls in $\cB'$ by the name $\Delta$.
When we need to be more specific we will write $\Delta(\cB,i)$ where $i$ ranges from 1 to $D'$.  
Thus every ball $B\in \cB$ has an $i\in\{1,...,D'\}$ and $Q\in \Delta(\cB,i)$ such that $B\subset Q\subset (1+2^{-\jump+2})B$.

We will also need  a similar construction for arcs in $\gamma$, except we will also take care to get all of $\gamma$ on every scale.
\begin{lemma}\label{l:build-filtration-arcs}
Suppose $J\geq 10$ is an integer, 
{$\delta\in(0,1)$ }
$L > 0$, and $\cF^0=\bigcup\limits_{n\geq m} \cF_n^0$ is a collection of arcs in $\gamma$ such that
\begin{enumerate}[(i)]
\item 
For $\tau\in\cF_n^0$, we have $L2^{-nJ} \leq \diam(\tau) < L2^{-nJ+3}$.
\item For $\tau_1,\tau_2\in\cF_n^0$, we have $\tau_1\cap \tau_2=\emptyset$.
\item
Let $k>0$. 
If  $\zeta\in\cF_{n+k}^0$, $\tau\in \cF_n^0$ and $\zeta\cap \tau\neq\emptyset$, then $\zeta\subset \tau$.
\end{enumerate}
Then there is a collection of arcs $\cF = \bigcup_{n \geq m} \cF_n$ with the following properties
\begin{enumerate}
\item 
For $\zeta\in\cF_{n+1}$, there is a unique element $\tau\in \cF_n$ such that $\zeta\subset \tau$.
\item
For $\tau\in\cF_n$, we have $\delta L2^{-nJ}\leq \diam(\tau) < L2^{-nJ+4}$.
\item
For $\tau_1,\tau_2\in\cF_n$ we have that they are either disjoint, identical, or intersect in (one or both of) their endpoints.
\item
For all $n$,
$\bigcup\cF_n = \bT$.
\item
For each element $\tau^0\in \cF_n^0$ there is an element $\tau\in\cF_n$ such that $\tau\supset\tau^0$.
We have that domain of  $\tau\setminus \tau^0$ has at most  two connected components, each of which with  image with diameter 
$< \delta L2^{-nJ}$.
\item If $\tau^0,\tau^1\in \cF_n^0$ then they give rise to two different arcs in $\cF_n$.
\end{enumerate}
\end{lemma}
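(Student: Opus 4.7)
The plan is to build $\cF_n$ by induction on $n$ starting from the coarsest level $n=m$, specifying at each step a finite set of cut points $E_n\subset\bT$ with $E_n\subset E_{n+1}$, so that the refinement property (1) is automatic. The arcs of $\cF_n$ are the closed sub-arcs of $\bT$ between consecutive points of $E_n$; $E_n$ will contain the endpoints of every $\tau^0\in\cF_n^0$ together with additional cut points used to subdivide the gaps between consecutive arcs of $\cF_n^0$.

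For the base case $n=m$: by hypothesis (ii) the arcs of $\cF_m^0$ are pairwise disjoint in $\bT$, so between any two consecutive ones in the cyclic order there is a gap arc $g$. Depending on $\diam(g)$ we do one of three things. If $\diam(g)<\delta L 2^{-mJ}$ we absorb $g$ entirely as a cap of one neighbor. If $\diam(g)$ is moderate, we split $g$ so that each neighbor picks up an extension of diameter less than $\delta L 2^{-mJ}$ with at most one filler arc between them. If $\diam(g)$ is large, we additionally subdivide the middle of $g$ greedily into filler arcs of diameter in $[\delta L 2^{-mJ}, L 2^{-mJ+4})$. The greedy subdivision is feasible because the diameter of a sub-arc is continuous and non-decreasing as one slides its endpoint along $\bT$, so one can always stop at exactly the target diameter. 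A direct check confirms conditions (2) and (5) at level $m$, using $\delta L2^{-mJ}+L2^{-mJ+3}<L2^{-mJ+4}$ and $\diam(\tau)\ge\diam(\tau^0)\geq L2^{-mJ}>\delta L2^{-mJ}$.

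The inductive step from $n$ to $n+1$ is the same construction carried out separately inside each $\tau\in\cF_n$: identify the arcs $\zeta^0\in\cF_{n+1}^0$ lying inside $\tau$, and apply the base-case construction with these as the seed arcs. Hypothesis (iii) tells us that every $\zeta^0\in\cF_{n+1}^0$ that meets some $\tau^{0}\in\cF_n^0$ is contained in $\tau^{0}\subset\tau$; the remaining $\zeta^0$ lie entirely in gaps of $\cF_n^0$, i.e. in the extension/filler portion of some $\tau$.

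The main technical obstacle is to guarantee that every cut point at level $n$ avoids the interior of every arc in $\cF_{n+1}^0$; otherwise some $\zeta^0\in\cF_{n+1}^0$ would straddle the boundary between two arcs of $\cF_n$ and admit no valid ancestor at level $n+1$. We handle this by perturbing each would-be cut point by at most $L 2^{-(n+1)J+3}$ (the maximum diameter of a single arc of $\cF_{n+1}^0$) so that it lands at an endpoint of the nearest $\zeta^0$ or in the complement of $\bigcup\cF_{n+1}^0$; such room always exists because the arcs of $\cF_{n+1}^0$ sitting inside a fixed level-$n$ region are pairwise disjoint. Since the perturbation is negligible compared to the level-$n$ resolution $L 2^{-nJ}$ (using $J\geq 10$), all the size and extension bounds of the previous paragraph are preserved. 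With cut points chosen this way, the six properties follow directly: (1) from $E_n\subset E_{n+1}$; (2) and (5) from the case analysis in the construction; (3) because distinct arcs of $\cF_n$ are consecutive intervals between cut points and meet at most at a shared endpoint; (4) because $E_n$ partitions $\bT$; and (6) because each seed $\tau^0\in\cF_n^0$ is extended into a unique new arc containing it, distinct seeds giving distinct extensions.
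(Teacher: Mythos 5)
Your argument follows the paper's construction essentially verbatim: build $\cF_n$ inductively by absorbing gaps of diameter below $\delta L2^{-nJ}$ into a neighboring seed arc, keeping moderate gaps as stand-alone filler arcs, greedily subdividing large gaps into pieces of diameter in $[L2^{-nJ},L2^{-nJ+4})$, and choosing new cut points so as not to split the finer-level seed arcs. One point to be a bit more careful about in your perturbation step: a cut point placed at level $n$ must avoid the interiors of $\cF^0_{n+k}$ for \emph{all} $k\geq 1$ (not just $k=1$), since it becomes a permanent boundary reused at every deeper level; landing it on an endpoint of a $\zeta^0\in\cF^0_{n+1}$ does achieve this automatically via the nesting hypothesis (iii), but your alternative of merely landing it in the complement of $\bigcup\cF^0_{n+1}$ does not by itself rule out landing inside some floating $\cF^0_{n+k}$ arc with $k\geq 2$.
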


We call the families of arcs that satisfy the hypothesis and conclusion of Lemma \ref{l:build-filtration-arcs} {\it prefiltrations} and {\it filtrations} of $\bT$, respectively.

\begin{note} \label{n:j-delta-const}
We will take $\delta=2^{-10}$, which we need for the proof of Lemma \ref{l:diam-large-NEW} (any sufficiently small value would work).  
For  the proof of Proposition \ref{l:modified-prop-4} we then need to set $J=100$ (smaller values of $\delta$ would yield larger values in $J$, with $J$ depending linearly on $\log(\delta)$.)
We will also take $L = A2^l$ where $l \in \{0,...,J-1\}$.  This $L$ comes from diameter bounds of the prefiltration as given in Lemma \ref{l:prefiltration}.  The discussion following Lemma \ref{l:prefiltration}  will  be the sole place we use  Lemma \ref{l:build-filtration-arcs} to construct filtrations; the properties these filtrations will be used later in the paper.
\end{note}

\begin{remark} \label{arc-clarification}
When discussing an arc $\tau$ in $\gamma$, we are really considering the function that is the restriction $\gamma|_{I_\tau}$, 
where $I_\tau = [a(\tau),b(\tau)]$ is a closed interval in $\bT$ compatible with the chosen direction of flow.  The quantity $\diam(\tau)$ is defined to be the diameter of the image of $\tau$.  On the other hand, if we say that $\tau_1$ and $\tau_2$ intersect, or have $\tau_1\subset \tau_2$, then we are referring to the domain of these functions, i.e to a subset of $\bT$. 

Note that one immediate consequence of the diameter bounds of subarcs in the filtrations is that, for a given arc $\tau \in \cF_n$, the number of arcs $\zeta \in \cF_{n+1}$ such that $\zeta \subseteq \tau$ is finite (although there is no {\it a priori} bound).  This is because we are supposing that $\gamma$ is arclength parameterized and so a lower bound for the diameter of the image of the arc translates to a lower bound for the diameter of the domain of the arc.  This also shows that the cardinality of the all the subarcs of a filtration is countable.
\end{remark}

\begin{proof}
We construct the collections $\cF_n$ by induction. All the properties will be immediately verifiable by the construction.  As $\gamma$ is fixed, we can refer to subarcs by their domain in $\bT$ as long as we make sure to remember that their diameter is taken with respect to the image.  We start with $n=m$.  We will assume that $\cF_m^0$ does not contain the subarc that is the entire $\bT$ as otherwise we can skip ahead in $n$ until we hit such an instance.

We first suppose that $\cF_m$ contains at least two subarcs.  Let
\begin{align*}
   \bigcup_j R_{m,j} = \bT \setminus \left( \bigcup \cF_m^0 \right),
\end{align*}
where $R_{m,j}$ are disjoint open intervals.  Note that each $R_{m,j}$ is surrounded by two arcs of $\cF_m^0$.  If we have that 
{
$\diam(R_{m,j}) < \delta L2^{-mJ}$
}
(remembering that this is diameter in the image of $\gamma$), then we merge it with one of the neighboring arcs of $\cF_m^0$, choosing arbitrarily between the two, and remove it from $\{R_{m,j}\}$.  We can see that elements of the modified $\cF_m^0$ will have diameter less than $L2^{-mJ+4}$.

We now go through the remaining subarcs of $\{R_{m,j}\}$, which now all have diameter at least 
{ 
$\delta L2^{-mJ}$
}.  
If $R_{m,j}$ is a subarc such that 
{
$\delta L2^{-mJ} \leq \diam(R_{m,j}) < L2^{-mJ + 4}$,
} 
then we leave it alone.  If we get a subarc so that $\diam(R_{m,j}) \geq L2^{-mJ + 4}$, then we can partition $R_{m,j}$ into intervals of diameter between $[L2^{-mJ},L2^{-mJ+4})$ such that each element of $\cF_{m+1}^0$ is contained in a single subarc (either in $\cF_m^0$ or in one of the partitions of $R_{m,j}$).  This can be done because we have a large enough $\jump \geq 10$.  
We then let $\cF_m$ be the set composed of (possibly) extended $\cF_m^0$ and closures of the partitions of $\{R_{m,j}\}$.

In the case that $\cF_m$ contains only one subarc which is not all of $\bT$ (which we will still refer to as $\cF_m^0$ by abuse of notation), we look at its complement $R_m = \bT \setminus \cF_m^0$.  If 
{
$\diam(R_m) < \delta L2^{-mJ}$,
} 
then we merge it with $\cF_m^0$ and so $\cF_m = \{\bT\}$.  If 
{
$\delta L2^{-mJ} \leq \diam(R_m) < L2^{-mJ + 4}$,
}
then we take $\cF_m = \{\cF_m^0, \overline{R}_m\}$.  If $\diam(R_m) \geq L2^{-mJ + 4}$, then we partition it as in the previous paragraph and take $\cF_m$ to be the closures of this collection of subarcs along with $\cF_m$.

We now continue inductively.  Let $n > m$.  Let
\begin{align*}
\bigcup_j R_{n,j}=   \bT\setminus \left[\left(\bigcup\cF_{n}^0\right) \cup \left(\bigcup_{\tau\in\cF_{n-1}} \partial\tau\right)\right] 
\end{align*}
where $R_{n,j}$ are disjoint open intervals.

If we have a subarc so that 
{$\diam(R_{n,j}) < \delta L2^{-nJ}$,} 
then $R_{n,j}$ must share a boundary point with some subarc of $\cF_n^0$.  Indeed, the only other possibility is that $R_{n,j}$ has as boundary points two points of $\bigcup_{\tau \in \cF_{n-1}} \partial \tau$.  However, as $\bigcup \cF_{n-1} = \bT$ this means that there is some $\tau \in \cF_{n-1}$ so that $\tau = R_{n,j}$ and so 
{$\diam(\tau) < \delta L2^{-nJ}$}.
  This is a contradiction of the diameter bound 
 { $\diam(\tau) \geq \delta L2^{-(n-1)J}$ }
  for all $\tau \in \cF_{n-1}$.

Thus, we may, as before, merge each $R_{n,j}$ with 
{$\diam(R_{n,j}) < \delta L2^{-nJ}$ }
with one of the arcs of $\cF_n^0$ that it borders, choosing arbitrarily if there are two, and then remove it from $\{R_{n,j}\}$.  We can see that elements of the modified $\cF_n^0$ will have diameter at most $L2^{-nJ+4}$.

The remaining steps are exactly the same as before.  We go through the remaining subarcs of $\{R_{n,j}\}$, which all have diameter at least 
{$\delta L2^{-nJ}$. }   
If a subarc such that $ \diam(R_{n,j}) < L2^{-nJ + 4}$, then we leave it alone.  If we get a subarc so that $\diam(R_{n,j}) \geq L2^{-nJ + 4}$, then we can partition $R_{n,j}$ into intervals of length between $[L2^{-nJ},L2^{-nJ+4})$ such that each element of $\cF_{n+1}^0$ has a single parent (either in $\cF_n^0$ or in one of the partitions of $R_{n,j}$).  This can be done because we have a large enough $\jump \geq 10$.  We then let $\cF_n$ be the set composed of (possibly) extended $\cF_n^0$ and the closures of the subarcs making up the partitions of $\{R_{n,j}\}$.

The collection $\cF = \bigcup_n \cF_n$ is the desired filtration.
\end{proof}

\subsection{Different types of balls: flat vs. non-flat}\label{s:types-of-balls}

In this section we divide the collection of balls $\smallballs$ into different types of balls, which we will later handle by independent techniques.  However, we first need to define several families of arcs associated to every ball.

\begin{note}
Recall that  we have set $A=10$ and $\jump=100$.  We now also set $\kappa=3$.  
This value for $\kappa$ will be used 
when invoking the construction of the ``cubes" $\Delta$ and the lemma that follows it, Lemma \ref{cube-properties-1}. 
The  value of $\kappa$ could have been taken to be any number $\geq 3$ .
\end{note}
Let $2\smallballs$ denote the doubles of balls in $\smallballs$, and let $\cB=2\smallballs$.
We apply Lemma \ref{l:number-of-families} to $\cB$ with $2A = 20$ the implied constant, $\jump = 100$, and $\kappa = 3$ to get well separated subfamilies $\{\cB_i\}_{i=1}^{D'}$, where $D'=D(2A,\kappa)J=D(20,3)100$.  We then apply the construction of Lemma \ref{cube-properties-1} to produce $\{\Delta(\cB,i)\}_{i=1}^{D'}$. For each ball $B\in \smallballs$, we have thus fixed a cube $Q=Q(B)$ with 
$2B\subset Q(B)\subset (1+2^{-\jump+2})2B$.
Given such a cube $Q(B) \subset \H$ we let
\begin{align*}
  \Lambda(Q(B)) := \left\{\tau = \gamma|_{[a,b]} : \right.&[a,b] \subset \T ~\text{is a connected component of} ~\gamma^{-1}(\Gamma \cap Q) \\
  &\left.~\text{and} ~\gamma([a,b]) \cap B \neq \emptyset \right\},
\end{align*}
that is, $\Lambda(Q(B))$ composes of all connected subarcs through $Q(B)$ that intersect $B$.  See the left hand side of Figure \ref{f:tau-and-tau-prime}.  For each $i \in \{1,...,D'\}$, let $\cF^{0,i} = \bigcup_{Q(B) \in \Delta(\cB,i)} \Lambda(Q(B))$.

\begin{lemma} \label{l:prefiltration}
  For each $i \in \{1,...,D'\}$, $\cF^{0,i}$ is a prefiltration and there exists some $l(i) \in \{0,...,J-1\}$ such that we have the diameter bounds
  \begin{align}
    A2^{l(i)} 2^{-kJ} \leq \diam(\tau) < A 2^{l(i)} 2^{-kJ + 3}, \qquad \forall \tau \in \cF^{0,i}_k. \label{e:prefiltration-diam-bound}
  \end{align}
\end{lemma}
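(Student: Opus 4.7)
The plan is to verify, for each fixed $i \in \{1, \ldots, D'\}$, the three defining conditions of a prefiltration (from Lemma \ref{l:build-filtration-arcs}) for the collection $\cF^{0,i}$, and to identify $l(i)$; the claimed diameter bound is precisely condition (i) with $L = A 2^{l(i)}$.  First I would set up the level indexing: by Lemma \ref{l:number-of-families}, radii within $\cB_i$ lie in ratio $2^{J\bZ}$, so there is a fixed residue $r_i \in \{0, \ldots, J-1\}$ such that every $B' \in \cB_i$ has $\rad(B') = 2A \cdot 2^{-n}$ with $n \equiv r_i \pmod J$.  Choosing $l(i) \equiv -r_i \pmod J$ with $l(i) \in \{0, \ldots, J-1\}$, I would write $n = kJ - l(i)$ for a suitable integer $k$, assign each $\tau \in \Lambda(Q(B'))$ to $\cF_k^{0,i}$, and take the minimum admissible $k$ as the starting level $m$ of the prefiltration.

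For the diameter bound (condition (i)), fix $\tau \in \Lambda(Q(B))$ for $B \in \smallballs$ with $2B \in \cB_i$ at level $k$, so that $\rad(B) = A \cdot 2^{l(i) - kJ}$.  The upper bound follows from $\tau \subseteq Q(B) \subseteq (1 + 2^{-J+2})(2B)$:
\begin{align*}
\diam(\tau) \leq \diam Q(B) \leq 4(1 + 2^{-J+2}) \rad(B) < 5 \rad(B) < A 2^{l(i)} 2^{-kJ + 3}.
\end{align*}
For the lower bound, by the reduction in Section \ref{s:special-multires} we may assume $\diam(\Gamma) = 1$; since $\rad(B) < \tfrac{1}{100}$ gives $\diam Q(B) < 1$, the curve $\gamma$ is not entirely contained in $Q(B)$, and so the two endpoints of $\tau$ must lie on $\partial Q(B)$.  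As $Q(B) \supseteq 2B$, the boundary $\partial Q(B)$ is at distance at least $\rad(B)$ from every point of $B$; since $\tau$ meets $B$ by definition of $\Lambda(Q(B))$, this forces $\diam(\tau) \geq \rad(B) = A 2^{l(i)} 2^{-kJ}$.

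For disjointness (condition (ii)), let $\tau_1, \tau_2 \in \cF_k^{0,i}$ be distinct: if they come from the same cube $Q(B)$ they are distinct connected components of $\gamma^{-1}(\Gamma \cap Q(B))$ and hence disjoint, while if they come from distinct cubes $Q(B_1), Q(B_2)$, Lemma \ref{cube-properties-1}(3) with $\kappa = 3$ gives $\dist(Q(B_1), Q(B_2)) > 2 \rad(2B_1) > 0$, so the cubes are disjoint and so are the preimages.  For nesting (condition (iii)), if $\zeta \in \cF_{k+k'}^{0,i}$ ($k' > 0$) from $Q(B_2)$ meets $\tau \in \cF_k^{0,i}$ from $Q(B_1)$, then any $t$ in the intersection puts $\gamma(t) \in Q(B_1) \cap Q(B_2)$, so by Lemma \ref{cube-properties-1}(2) $Q(B_2) \subseteq Q(B_1)$; hence $\zeta$, as a connected component of $\gamma^{-1}(\Gamma \cap Q(B_2))$, lies inside a connected component of $\gamma^{-1}(\Gamma \cap Q(B_1))$, which must be $\tau$.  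The only genuinely content-bearing step is the lower diameter bound, which truly relies on $\diam(\Gamma) = 1$ to exclude the degenerate possibility $\gamma \subseteq Q(B)$; everything else is bookkeeping that matches the residue class of radii in $\cB_i$ to an $l(i) \in \{0,\ldots,J-1\}$ and transcribes the separation and nesting of the cubes $Q(B)$ into separation and nesting of the arcs $\tau$.
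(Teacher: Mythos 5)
Your proof is correct and follows essentially the same route as the paper's: the diameter estimate $\rad(B)\leq\diam(\tau)\leq 5\rad(B)$ (derived from $2B\subseteq Q(B)\subseteq(1+2^{-J+2})2B$ and $\tau\cap B\neq\emptyset$), the residue-class bookkeeping to extract $l(i)$ from the $2^{J\Z}$ structure of radii in $\cB_i$, and Lemma~\ref{cube-properties-1}(2)--(3) for the nesting and disjointness properties. Your explicit observation that $\diam(\Gamma)=1$ and $\rad(B)<1/100$ rule out $\gamma\subseteq Q(B)$ (so that the endpoints of $\tau$ genuinely lie on $\partial Q(B)$) is a step the paper leaves implicit, and it is a useful clarification rather than a departure from the argument.
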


\begin{proof}
  Let $i \in \{1,...,D'\}$ be fixed, choose some $B \in \cB_i$, and let $\tau \in \Lambda(Q(B))$.  Remembering that $2B \subseteq Q(B) \subseteq (1+2^{-J+2})2B$ and that $\tau(I_\tau) \cap B \neq \emptyset$, we get that
  \begin{align}
  5\rad(B)\geq \diam(\tau)\geq \rad(B)\,.  \label{e:tauQ-rad-bound}
  \end{align}
  One of the properties of $\cB_i$ is that there exists some $l \in \{0,...,J-1\}$ so that $r(B) \in A2^{l + J\Z}$.  Thus, it is clear that $\cF^{0,i}$ can be decomposed as a collection of curves $\bigcup_j \cF^{0,i}_j$ that satisfies $\eqref{e:prefiltration-diam-bound}$.
 Thus we have property (i) of a prefiltration.

Since $\kappa=3$, we have property (ii) of a prefiltration from Lemma \ref{cube-properties-1} (3).

  Now suppose $k > 0$ and $\tau \in \cF_n^{0,i}$, $\tau' \in \cF_{n+k}^{0,i}$ such that $\tau \in \Lambda(Q(B))$, $\tau' \in \Lambda(Q(B'))$, and $\tau \cap \tau' \neq \emptyset$ (remembering how we defined two arcs intersecting in Remark \ref{arc-clarification}).  Thus, $Q(B) \cap Q(B') \neq \emptyset$.  As $\rad(B) > \rad(B')$, we get that $Q(B') \subset Q(B)$ and so $\tau' \subset \tau$.
   Thus we have property (iii) of a prefiltration.

\end{proof}

By Lemma \ref{l:build-filtration-arcs} applied with \eqref{e:prefiltration-diam-bound} and $L = A2^{l(i)}$, we can complete each $\cF^{0,i}$ to a filtration $\cF^i$.  Thus, for each $\tau \in \cF^{0,i}_k$, there exists some $\tau' \in \cF^i_k$ such that $\tau \subseteq \tau'$.  We then define for each $B \in \smallballs$
\begin{align*}
  \Lambda'(Q(B)) := \{ \tau' : \tau \in \Lambda(Q(B)) \}.
\end{align*}
See the right hand side of Figure \ref{f:tau-and-tau-prime}.

\begin{figure}
\scalebox{0.8}{
\scalebox{0.5}{\includegraphics*{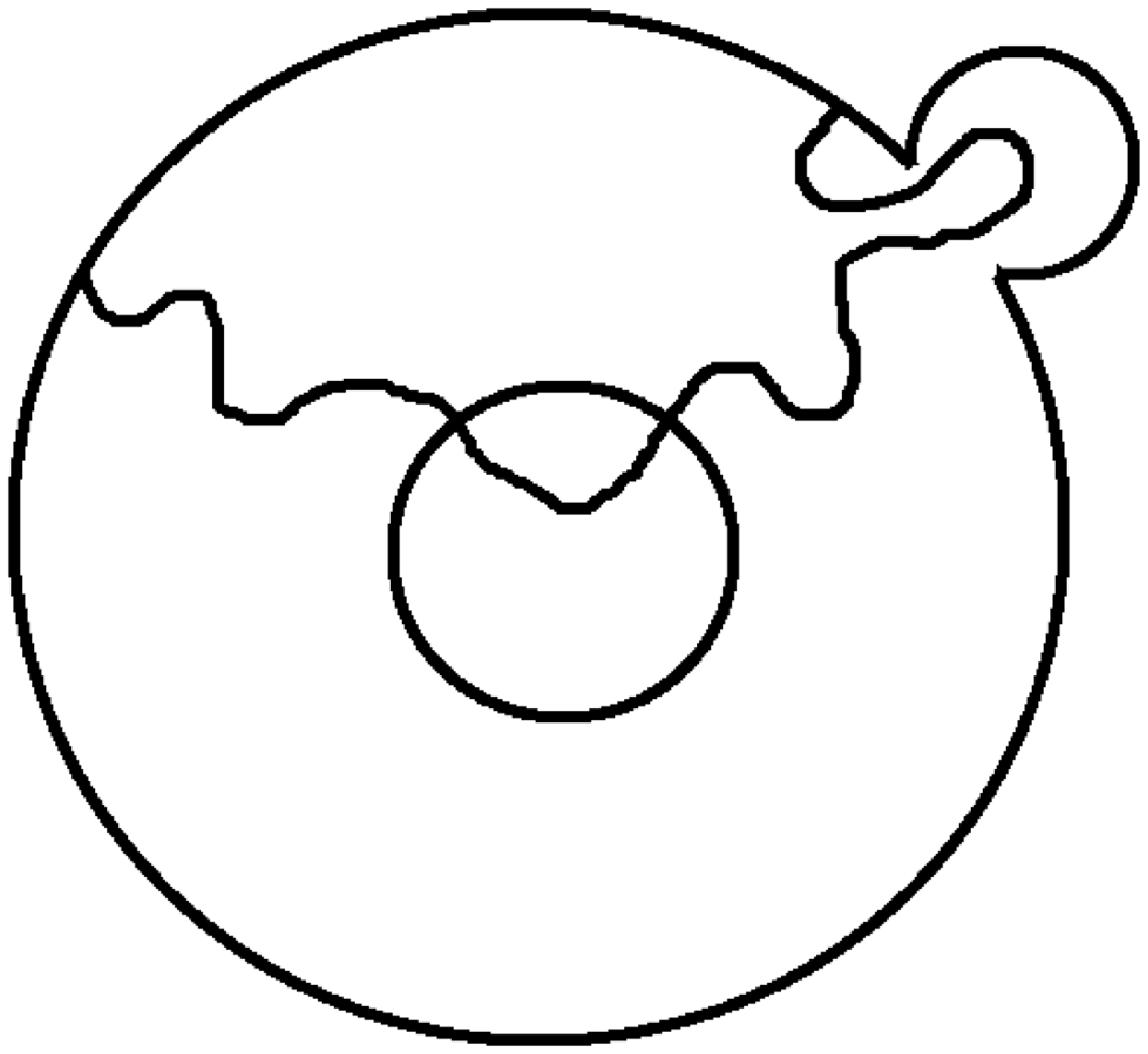}}
\setlength{\unitlength}{1mm}
\begin{picture}(0,0)
\put(-33,65){ $Q$}
\put(-65,42){ $2B$}
\put(-65,54){ $B$}
\put(-80,80){   $\tau$}
 \end{picture}
 
 \hspace{-1.5in}
 
\scalebox{0.5}{\includegraphics*{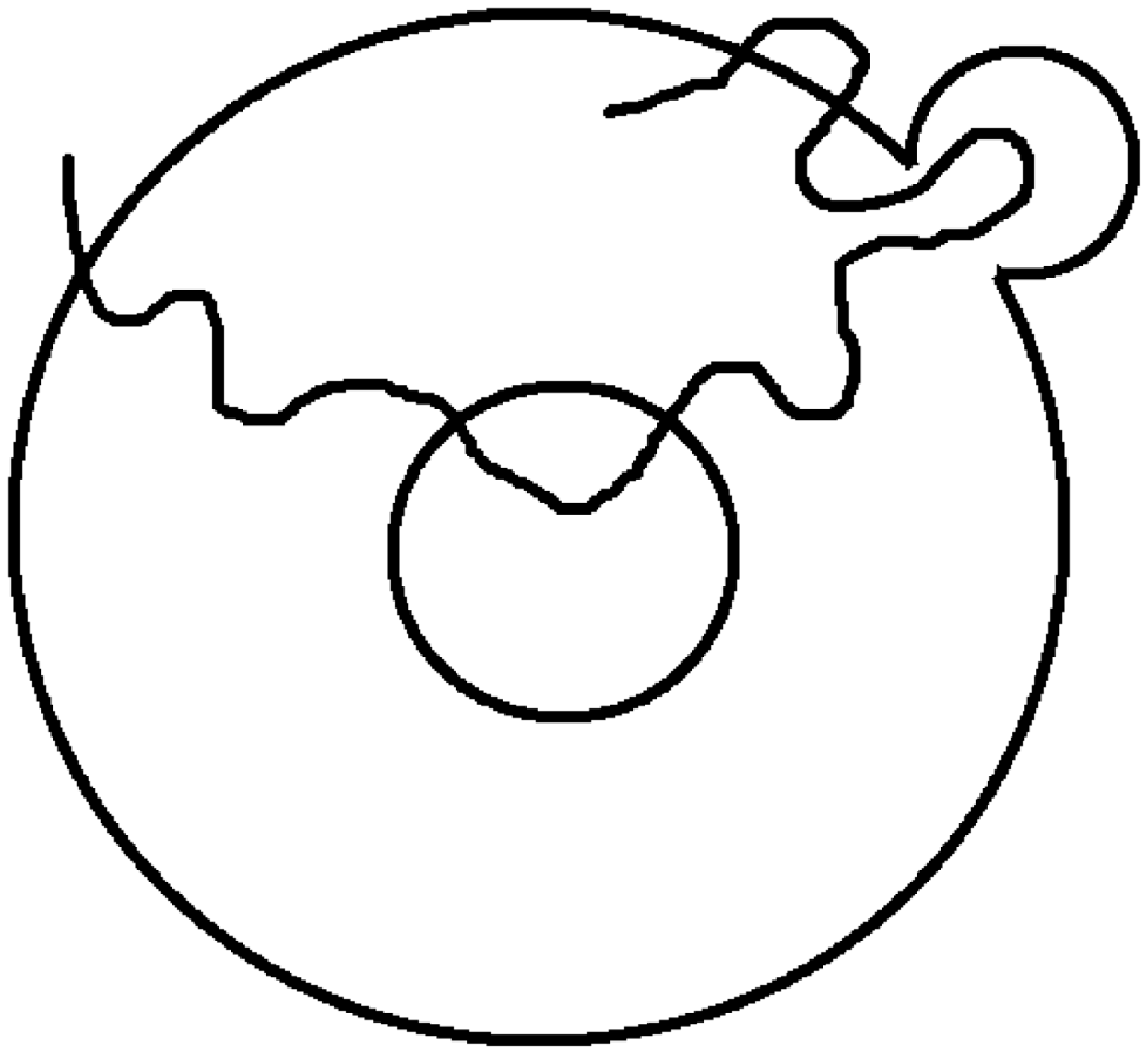}}
\setlength{\unitlength}{1mm}
\begin{picture}(0,0)
\put(-33,65){ $Q$}
\put(-65,42){ $2B$}
\put(-65,54){ $B$}
\put(-80,80){   $\tau'$}
 \end{picture}
 }
 
  \vspace{-1in}

\caption{Left:  $\tau\in \Lambda(Q)$. Right: $\tau'\in \Lambda'(Q)$ \label{f:tau-and-tau-prime}}
\end{figure}


Given some subarc $\tau$, we can define
\begin{align*}
  L_\tau := \{\gamma(a(\tau)) \delta_t \tilde{\pi}(\gamma(a(\tau))^{-1}\gamma(b(\tau))) : t \in [0,1]\},
\end{align*}
that is, $L_\tau$ is the horizontal line segment that starts from $\gamma(a(\tau))$ and goes horizontally towards $\gamma(b(\tau))$, possible, without hitting $\gamma(b(\tau))$; see Remark \ref{r:doesnt-hit}.  We can then define the quantity
\begin{align*}
  \beta(\tau) := \sup_{t \in I_\tau} \frac{d(\gamma(t),L_\tau)}{\diam(\tau)}.
\end{align*}
Thus, $\beta(\tau)$ evaluates how far $\tau$ can get from the specific horizontal line segment $L_\tau$.  Recall that $\diam(\tau)$ is measured with respect to its image.

\begin{note}\label{n:epsilon-s}
We fix $\epsilonSubS=10^{-10}$ (any sufficiently small constant would suffice).
The value of this constant will become apparent in Section \ref{s:flat-balls}; the first time its value is used is in Lemma \ref{l:large-subsarc}. 
\end{note}
We let 
\begin{align*}
\flatballs:=\{B\in\smallballs: \beta(\tau) < \epsilonSubS \beta_\Gamma(B) ~\forall \tau \in \Lambda'(Q(B)) \} 
\end{align*}
and let
\begin{align*}
  \curvyballs:=\smallballs\setminus\flatballs = \{ B \in \smallballs : \exists \tau \in \Lambda'(Q(B)) ~\text{such that} ~\beta(\tau) \geq \epsilonSubS \beta_\Gamma(B)\}.
\end{align*}
%
We will show in Section \ref{s:curvy-balls} that
 \begin{align}
\sum\limits_{B\in \curvyballs} \beta_\Gamma(B)^4\diam(B)\leq C\cH^1(\Gamma)\,, \label{e:G1-control}
\end{align}
and we will show in Section \ref{s:flat-balls} that
 \begin{align}\label{e:G2-control}
\sum\limits_{B\in \flatballs} \beta_\Gamma(B)^4\diam(B)\leq C\cH^1(\Gamma)\,.
\end{align}

\section{Non-flat balls}\label{s:curvy-balls}
In this section we prove \eqref{e:G1-control}.  Recall that we have a fixed parametrization $\gamma$ (see the discussion after Lemma \ref{ell-H1}). Also recall from Lemma \ref{l:build-filtration-arcs} that a filtration is constructed from a prefiltration with parameters $\jump$, $\delta$, $L$, and $m$.  The primary result that we will use to prove \eqref{e:G1-control} is the following proposition.

\begin{proposition} \label{p:beta-filtration}
  For any filtration $\cF$ constructed with $\jump=100$ and $\delta = 2^{-10}$ ($m$ and $L$ are allowed to be arbitrary), we have
  \begin{align}\label{e:prop-beta-filtration-statement}
    \sum\limits_{\tau\in\cF} \beta(\tau)^4\diam(\tau)\leq \frac{10^{14}2^{4\jump+66}}{\eta^2} \ell(\gamma).
  \end{align}
\end{proposition}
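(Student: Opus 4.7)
\medskip

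\noindent\emph{Proof plan.}  I will prove the proposition by a martingale-style telescoping over the levels of the filtration $\cF$.  For an arc $\sigma$, write $a_\sigma := \gamma(a(\sigma))$ and $b_\sigma := \gamma(b(\sigma))$, and define the polygonal length
\[
G_n \;:=\; \sum_{\tau \in \cF_n} d(a_\tau, b_\tau).
\]
Properties (3) and (4) of Lemma \ref{l:build-filtration-arcs} ensure that the arcs in $\cF_n$ tile $\bT$ and that their endpoints chain along $\gamma$, so $G_n \le \ell(\gamma)$ for every $n$.  For a fixed $\tau \in \cF_n$ with children $\tau'_1,\ldots,\tau'_k$ in $\cF_{n+1}$ (chained: $a_{\tau'_1}=a_\tau$, $b_{\tau'_i}=a_{\tau'_{i+1}}$, $b_{\tau'_k}=b_\tau$), the triangle inequality yields
\[
E_\tau \;:=\; \sum_{i=1}^k d(a_{\tau'_i}, b_{\tau'_i}) \,-\, d(a_\tau, b_\tau) \;\ge\; 0,
\]
so $G_n$ is non-decreasing in $n$ and $\sum_{n \ge m}(G_{n+1}-G_n) \le \ell(\gamma)$.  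The proposition therefore reduces to the pointwise inequality
\[
E_\tau \;\ge\; \frac{\eta^2}{10^{14}\,2^{4\jump+66}}\,\beta(\tau)^4\,\diam(\tau), \qquad \forall\,\tau \in \cF,
\]
from which summation over $\tau \in \cF_n$ and then over $n$ produces \eqref{e:prop-beta-filtration-statement}.

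To prove the pointwise inequality, fix $\tau \in \cF_n$ with $\beta(\tau) > 0$ and pick $t^\ast \in I_\tau$ with $d(\gamma(t^\ast), L_\tau) \ge \tfrac12 \beta(\tau) \diam(\tau)$.  I will select four endpoints of children of $\tau$, $p_1 < p_2 < p_3 < p_4$ (in the order along $\gamma$), arranged so that:
(i) $p_1 = a_\tau$ and $p_4 = b_\tau$, hence $\overline{p_1 p_4} = L_\tau$;
(ii) $p_2, p_3$ are the endpoints of a single child $\tau'$ containing $t^\ast$, so $\overline{p_2 p_3}$ passes within $O(\diam(\tau'))$ of $\gamma(t^\ast)$ and therefore attains distance $\gtrsim \beta(\tau)\diam(\tau)$ from $\overline{p_1 p_4}$;
(iii) the separation $d(p_i, \{p_1,p_4\}) \ge \epsilon\, d(p_1,p_4)$ holds for $i \in \{2,3\}$, with $\epsilon = \delta\, 2^{-\jump-6} = 2^{-116}$ from Note \ref{n:n2.4}.
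Feasibility rests on the child/parent diameter ratio $\diam(\tau')/\diam(\tau) \le 2^{-\jump+4}/\delta = 2^{-86}$ under the parameter choices $\jump=100$, $\delta=2^{-10}$ of Note \ref{n:j-delta-const}, which leaves ample room to arrange (ii) and (iii).  Once $p_1,\dots,p_4$ are chosen, Proposition \ref{p:prop-4} (invoked in the form of its filtration-adapted refinement Lemma \ref{l:modified-prop-4}) yields
\[
d(p_1,p_2) + d(p_2,p_3) + d(p_3,p_4) - d(p_1,p_4) \;\ge\; \frac{c\,\epsilon^4\,\eta^2}{10^{14}}\,\beta(\tau)^4\,\diam(\tau),
\]
after substituting $\diam(\{p_1,\dots,p_4\}) \le \diam(\tau)$ and the lower bound on $\sup_{a\in\overline{p_2 p_3}} d(a, \overline{p_1p_4})$ coming from (ii).  A telescoping triangle inequality along the chain of children between $p_1$ and $p_4$ bounds the left-hand side by $E_\tau$; substituting the explicit value of $\epsilon$ recovers the stated constant.

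The principal obstacle is the selection of $p_2, p_3$ when $t^\ast$ lies very close to $a(\tau)$ or $b(\tau)$: in that regime, a child containing $t^\ast$ may violate the $\epsilon$-separation~(iii) with respect to the endpoints of $\tau$.  The remedy, packaged in Lemma \ref{l:modified-prop-4}, is either to locate an alternative interior point of $\tau$ still witnessing a $\beta$-deviation comparable to $\beta(\tau)\diam(\tau)$, or to replace the outer pair $(p_1,p_4)$ by the endpoints of a deeper descendant strictly inside $\tau$, absorbing a bounded multiplicative loss into $2^{4\jump+66}$.  Carrying out this case analysis quantitatively, while maintaining the explicit dependence on $\eta$ and $\jump$, is the core technical task.
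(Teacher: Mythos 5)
Your reduction is fatally flawed at the step claiming the pointwise inequality $E_\tau \geq c\,\beta(\tau)^4\diam(\tau)$. This inequality is false in general: the excess $E_\tau$ measures only how far the \emph{endpoints} of the children of $\tau$ deviate from collinearity, while $\beta(\tau)$ records how far the \emph{arc itself} strays from $L_\tau$. It is perfectly possible for every child endpoint to lie exactly on $L_\tau$ (so $E_\tau = 0$), while a child arc bulges away from $L_\tau$ in between its endpoints (so $\beta(\tau)$ is large). Your step (ii), where you claim that $\overline{p_2 p_3}$ passes within $O(\diam(\tau'))$ of $\gamma(t^\ast)$, implicitly assumes $\diam(\tau') \lesssim \beta(\tau)\diam(\tau)$; but a single level of the filtration only shrinks diameters by a fixed factor $\approx 2^{-\jump}$, not by $\beta(\tau)$, so for small $\beta(\tau)$ the child arc containing $t^\ast$ will have diameter much larger than the deviation, and its horizontal segment need not be far from $L_\tau$ at all.

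The paper avoids this by working with a different quantity $d_\tau$, the maximal distance from the piecewise-horizontal polygon through the \emph{children's endpoints} to $L_\tau$, which is what Proposition~\ref{p:prop-4} (via Lemma~\ref{l:modified-prop-4}) genuinely controls by the two-level excess. One then needs a separate martingale-type lemma, $\beta(\tau)\diam(\tau) \leq \sum_{k\geq 0} d_{\tau_k}$ over a descending chain of subarcs, to pass from the polygonal quantity back to $\beta$; this lemma encapsulates exactly the observation above, that the deviation not seen at one level is inherited by a descendant. Finally, instead of a direct telescoping sum of linear quantities, the paper sums the $d_{\tau_k}$ via an $\ell^4$ Minkowski computation (using the geometric decay $\diam(\tau_k) \leq 2^{-(\jump+1)k}\diam(\tau)$ to absorb the infinite sum) and only then applies the bound $\sum_\tau d_\tau^4/\diam(\tau)^3 \lesssim \ell(\gamma)$. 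Your plan has the correct telescoping for the right-hand side and correctly identifies the roles of Proposition~\ref{p:prop-4} and the separation condition, but it is missing both the intermediate quantity $d_\tau$ and the $\ell^4$-type summation that recovers $\beta(\tau)$ from the $d_{\tau_k}$'s; without these, the argument does not close.
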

\begin{note}
One may consider stating the above proposition for  $J\geq 100$ and $\delta\in (2^{-10},1)$ which would suffice for Lemma \ref{l:modified-prop-4}.
One may also consider varying $\delta$ in the range $\delta\in (2^{-J-6},1)$, however then the constant on the right hand side of  \eqref{e:prop-beta-filtration-statement} would need to be modified.
 {\bf  An important point} is that  if one does any of these, then 
one would also need to modify the $\eta$ which has already been  fixed  after  Proposition \ref{p:prop-4} (see Note \ref{n:n2.4}).
 It is for this reason, that we fix specific values for $J$ and $\delta$.
\end{note}
Before we prove the proposition, we first use it to prove the following corollary, which proves \eqref{e:G1-control}.  Recall how $\curvyballs$ was constructed in Section \ref{s:types-of-balls}.

\begin{cor}
  With the choices of parameters $\kappa=3$, $\jump=100$, and $A=10$, $\eta=2^{-1200}$,  $\delta = 2^{-10}$ and  $\epsilonSubS=10^{-10}$, 
there exists some absolute constant $C > 0$ such that
  \begin{align*}
    \sum\limits_{B\in \curvyballs} \beta_\Gamma(B)^4\diam(B)\leq C\cH^1(\Gamma).
  \end{align*}
\end{cor}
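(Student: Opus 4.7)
The plan is to reduce the sum over $\curvyballs$ to a sum over filtrations and then apply Proposition \ref{p:beta-filtration}. The key idea is that the definition of $\curvyballs$ supplies, for each $B \in \curvyballs$, a ``witness'' arc $\tau'(B) \in \Lambda'(Q(B))$ with $\beta(\tau'(B)) \geq \epsilonSubS\, \beta_\Gamma(B)$. Since $B$ belongs to a unique $\cB_i$ (where $i \in \{1,\ldots,D'\}$ and $D' = D(20,3)\cdot 100$), we obtain a map $\Phi_i : \curvyballs \cap \cB_i \to \cF^i$, $B \mapsto \tau'(B)$. What remains is to compare $\beta_\Gamma(B)^4 \diam(B)$ with $\beta(\Phi_i(B))^4 \diam(\Phi_i(B))$ and to bound the multiplicity of $\Phi_i$.

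For the first comparison, fix $B \in \curvyballs \cap \cB_i$ at level $k$ (meaning $\rad(B) \asymp A 2^{l(i)-kJ}$). The associated $\tau \in \Lambda(Q(B))$ has $\rad(B) \leq \diam(\tau) \leq 5\rad(B)$ by \eqref{e:tauQ-rad-bound}, while its extension $\tau' = \Phi_i(B) \in \cF^i_k$ satisfies $\delta L 2^{-kJ} \leq \diam(\tau') < L 2^{-kJ+4}$ with $L = A 2^{l(i)}$. Consequently
\begin{equation*}
\tfrac{\delta}{16}\, \diam(B) \;\leq\; \tfrac{\delta}{8}\, \diam(\tau) \;\leq\; \diam(\tau') \;\leq\; 16\, \diam(\tau) \;\leq\; 80\, \diam(B),
\end{equation*}
so $\diam(\tau') \asymp \diam(B)$. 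Combined with $\beta(\tau') \geq \epsilonSubS\, \beta_\Gamma(B)$ we get
\begin{equation*}
\beta_\Gamma(B)^4\, \diam(B) \;\leq\; \frac{16}{\delta\, \epsilonSubS^4}\, \beta(\tau')^4\, \diam(\tau').
\end{equation*}

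For the multiplicity, suppose $\Phi_i(B_1) = \Phi_i(B_2) = \tau'$ with $B_1, B_2 \in \cB_i$ at level $k$; then their associated prefiltration arcs $\tau_1, \tau_2$ both satisfy $\tau_j \subseteq \tau'$ and $\tau_j \cap B_j \neq \varnothing$. In particular each $B_j$ lies within distance $\diam(\tau') \leq 80\, \rad(B_j)$ of any fixed point of $\tau'$. Since balls of $\cB_i$ of the same radius are $\kappa\, \rad(B) = 3\, \rad(B)$-separated (as in Lemma \ref{l:number-of-families}), geometric doubling of $\H$ bounds the number of such $B_j$ by some absolute constant $N$.

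Assembling these ingredients and applying Proposition \ref{p:beta-filtration} to each filtration $\cF^i$,
\begin{align*}
\sum_{B \in \curvyballs} \beta_\Gamma(B)^4\, \diam(B)
&= \sum_{i=1}^{D'} \sum_{B \in \curvyballs \cap \cB_i} \beta_\Gamma(B)^4\, \diam(B) \\
&\leq \frac{16\, N}{\delta\, \epsilonSubS^4} \sum_{i=1}^{D'} \sum_{\tau' \in \cF^i} \beta(\tau')^4\, \diam(\tau') \\
&\leq \frac{16\, N\, D'}{\delta\, \epsilonSubS^4} \cdot \frac{10^{14}\, 2^{4J + 66}}{\eta^2}\, \ell(\gamma) \;\leq\; C\, \cH^1(\Gamma),
\end{align*}
where at the last step we used Lemma \ref{ell-H1} ($\ell(\gamma) \leq 32\, \cH^1(\Gamma)$). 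The main obstacle in the plan is verifying the multiplicity bound: it is crucial that the separation built into $\cB_i$ together with the comparability $\diam(\tau') \asymp \diam(B)$ forces only boundedly many balls to share a given witness arc, which is precisely why the multiresolution was split into the well-separated subfamilies $\{\cB_i\}_{i=1}^{D'}$ at the outset.
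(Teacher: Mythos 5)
Your proof is correct and follows essentially the same strategy as the paper's: decompose $\curvyballs$ into the $D'$ well-separated subfamilies coming from Lemma \ref{l:number-of-families}, pick a witness arc $\tau_B\in\Lambda'(Q(B))$ for each $B$ using the definition of $\curvyballs$, compare $\beta_\Gamma(B)^4\diam(B)$ to $\beta(\tau_B)^4\diam(\tau_B)$, sum over the filtration $\cF^i$, and invoke Proposition \ref{p:beta-filtration}. The one place you deviate is in bounding the multiplicity of the map $B\mapsto\tau_B$: you argue via geometric doubling that at most $N$ balls of $\cB_i$ at a given level can share a witness arc. The paper does not need doubling at this point; it appeals to property (6) of Lemma \ref{l:build-filtration-arcs}, which was built into the filtration construction precisely so that distinct prefiltration arcs give rise to distinct filtration arcs. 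Combined with the fact that distinct balls of $\cB_i$ of the same radius have disjoint cubes (Lemma \ref{cube-properties-1}(3)), and hence disjoint arcs of $\Lambda(Q(B))$, this gives multiplicity exactly one. Your doubling argument is sound and a bit more robust, but it duplicates work that the construction already did for you, and it is worth noticing that property (6) was included exactly to make this step a free lunch. Also, your diameter comparison can be tightened: you do not need the lower bound $\diam(\tau')\geq\delta L 2^{-kJ}$ at all, since $\tau\subseteq\tau'$ directly gives $\diam(\tau')\geq\diam(\tau)\geq\rad(B)=\diam(B)/2$, yielding the factor $2/\epsilonSubS^4$ instead of your $16/(\delta\epsilonSubS^4)$; this of course does not affect the conclusion.
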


\begin{proof}[Proof of corollary]
  Note that the partition of $\cB(=2\smallballs)$ into $D'$ separated subfamilies $\{\cB_i\}_{i=1}^{D'}$ by Lemma \ref{l:number-of-families} also partitions $\curvyballs$ (really 2$\curvyballs$) into $D'$ separated subfamilies, which we will refer to as $\{\smallballs^i\}_{i=1}^{D'}$.  We remind the reader that $D'$ is a constant depending only on $\kappa=3$, $\jump=100$, and $2A=20$.

  By definition, for each $i$ and each $B \in \smallballs^i$ there exists some $\tau_B \in \Lambda'(Q(B))$ such that
  \begin{align}
    \beta_\Gamma(B) \leq \frac{1}{\epsilonSubS} \beta(\tau_B). \label{e:G1-redefn}
  \end{align}
  By construction, for each $i$, all elements of $\Lambda'(Q(B))$ for all $B \in \smallballs^i$ were subarcs taken from one specific filtration $\cF^i$ (out of $D'$ possible filtrations).  In addition, by Lemma \ref{l:build-filtration-arcs}
  we have that each  $\tau_B$ corresponds to a unique subarc of $\cF^i$. Thus, we have by Proposition \ref{p:beta-filtration} that
  \begin{multline*}
    \sum_{B \in \smallballs_1} \beta_\Gamma(B)^4 \diam(B) = \sum_{i=1}^{D'} \sum_{B \in \smallballs^i} \beta_\Gamma(B)^4 \diam(B) \overset{\eqref{e:tauQ-rad-bound} \wedge \eqref{e:G1-redefn}}{\leq} \frac{2}{\epsilonSubS^4} \sum_{i=1}^{D'} \sum\limits_{B\in \smallballs^i} \beta(\tau_B)^4\diam(\tau_B) \\
    \leq \frac{2}{\epsilonSubS^4} \sum_{i=1}^{D'} \sum_{\tau \in \cF^i} \beta(\tau)^4 \diam(\tau) \leq \frac{10^{14}2^{4\jump+67}D'}{\epsilonSubS^4\eta^2} \ell(\gamma) \leq \frac{10^{14}2^{4\jump+73}D'}{\epsilonSubS^4\eta^2} \cH^1(\Gamma).
  \end{multline*}
  In the last inequality, we used the fact that $\ell(\gamma) \leq 32\cH^1(\Gamma)$, which can be easily be seen from Lemma \ref{ell-H1}.
\end{proof}

Note that the proposition holds true {\it a posteriori} for any metric on $\H$ that is biLipschitz with $d$ (in particular, the Carnot-Carath\'{e}odory metric), although the multiplicative constant in the inequality will depend on the biLipschitz distortion.  Thus, so does the corollary.

Thus, it remains to prove the proposition.  We now let $\cF$ be some filtration satisfying the hypotheses of Proposition \ref{p:beta-filtration} that we fix for the rest of the section.  
We will need an improved  version of Proposition \ref{p:prop-4}.  Before we state it, we establish some notation.  For $\tau \in \cF_n$ and $k \in \N$, we let
\begin{align*}
  \cF_{\tau,k} := \{\tau' \in \cF_{n+k} : \tau' \subset \tau\}.
\end{align*}
We can now define
\begin{align*}
  d_\tau = \max_{\tau' \in \cF_{\tau,1}} \sup_{z \in L_{\tau'}} d\left(z,L_\tau\right)
\end{align*}
to be the maximal distance from the discontinuous piecewise-horizontal polygonal line determined by the endpoints of $\cF_{\tau,1}$ and $L_\tau$.  Keep in mind that we have fixed an orientation of $\bT$ so that $a$ and $b$, the endpoint functions, are uniquely determined.  We first prove the following lemma.

\begin{lemma} \label{l:dtau-diam-bound}
  $d_\tau \leq 2\diam(\tau)$.
\end{lemma}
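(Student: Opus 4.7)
The plan is a direct triangle-inequality argument: for a fixed $\tau' \in \cF_{\tau,1}$ and an arbitrary $z \in L_{\tau'}$, I would bound $d(z,L_\tau)$ by inserting the common vertex $\gamma(a(\tau'))$, writing
\[
d(z,L_\tau) \le d(z,\gamma(a(\tau'))) + d(\gamma(a(\tau')),L_\tau),
\]
and then showing each summand is at most $\diam(\tau)$. Taking the supremum over $z$ and the max over $\tau'$ then gives $d_\tau \le 2\diam(\tau)$.

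For the first summand, $z = \gamma(a(\tau')) \delta_t \tilde\pi(g)$ with $g = \gamma(a(\tau'))^{-1}\gamma(b(\tau'))$ and $t \in [0,1]$. By left-invariance of $d$ and the dilation scaling, $d(z,\gamma(a(\tau'))) = t\, N(\tilde\pi(g))$. The key observation is the inequality $N(\tilde\pi(h)) \le N(h)$ already noted in the preliminaries, which yields $N(\tilde\pi(g)) \le N(g) = d(\gamma(a(\tau')),\gamma(b(\tau')))$. Since $\tau' \in \cF_{\tau,1}$ means $I_{\tau'} \subseteq I_\tau$, both endpoints of $\tau'$ lie in $\gamma(I_\tau)$, so this distance is at most $\diam(\tau)$.

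For the second summand, I would use that $\gamma(a(\tau)) \in L_\tau$ (the line segment starts there), so $d(\gamma(a(\tau')),L_\tau) \le d(\gamma(a(\tau')),\gamma(a(\tau)))$. Again $\gamma(a(\tau'))$ and $\gamma(a(\tau))$ both belong to $\gamma(I_\tau)$, so this is bounded by $\diam(\tau)$.

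I do not anticipate any real obstacle here: the statement is essentially a sanity check that the piecewise-horizontal polygonal line does not wander too far from $L_\tau$ compared to the ambient diameter, and the only nontrivial ingredient is the contraction property $N(\tilde\pi(\cdot)) \le N(\cdot)$, which controls the length of a horizontal interpolation by the Koranyi distance between its defining endpoints.
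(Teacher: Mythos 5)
Your proposal is correct and is essentially the paper's own proof: the paper bounds $d(z,L_\tau)\le d(z,\gamma(a(\tau)))\le d(z,\gamma(a(\tau')))+d(\gamma(a(\tau')),\gamma(a(\tau)))\le\diam(\tau')+\diam(\tau)\le 2\diam(\tau)$, invoking $N(\tilde\pi(g))\le N(g)$ to control the first term, exactly as you do. The only cosmetic difference is the order in which you apply the triangle inequality (splitting at $\gamma(a(\tau'))$ before passing to the anchor point $\gamma(a(\tau))$), which changes nothing.
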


\begin{proof}
  Let $\tau' \in \cF_{\tau,1}$ and $z \in L_{\tau'}$.  As $N(\tilde{\pi}(g)) \leq N(g)$, we have that
  \begin{align*}
    d(z,\gamma(a(\tau'))) \leq \diam(\tau').
  \end{align*}
  Thus,
  \begin{multline*}
    d(z,L_\tau) \leq d(z,\gamma(a(\tau))) \leq d(z,\gamma(a(\tau'))) + d(\gamma(a(\tau')),\gamma(a(\tau))) \leq \diam(\tau') + \diam(\tau) \\
    \leq 2\diam(\tau).
  \end{multline*}
\end{proof}

We can now state our improved version of Proposition \ref{p:prop-4}.

\begin{lemma}\label{l:modified-prop-4}
  For any $\tau \in \cF$, we have that
  \begin{align}
    \frac{d_\tau^4}{\diam(\tau)^3} \leq \frac{10^{14}2^{4\jump+64}}{ \eta^2} \left(\left( \sum_{\tau' \in \cF_{\tau,2}} d(\gamma(a(\tau')),\gamma(b(\tau')))\right) - d(\gamma(a(\tau)),\gamma(b(\tau))) \right). \label{dtau-triangle-bound}
  \end{align}
\end{lemma}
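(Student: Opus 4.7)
The plan is to deduce Lemma \ref{l:modified-prop-4} from Proposition \ref{p:prop-4}, applying it to a quadruple of points built from the endpoints of $\tau$ and of the child realizing $d_\tau$; the triangle excess for this quadruple will be squeezed between Proposition \ref{p:prop-4} (from below) and a telescoping triangle inequality along the grandchild chain (from above).

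Concretely, I pick a child $\tau^* \in \cF_{\tau,1}$ and a point $z^* \in L_{\tau^*}$ realizing $d(z^*, L_\tau) = d_\tau$, and set $p_1 = \gamma(a(\tau))$, $p_2 = \gamma(a(\tau^*))$, $p_3 = \gamma(b(\tau^*))$, $p_4 = \gamma(b(\tau))$. These are in order along $\gamma$, with $\overline{p_1 p_4} = L_\tau$ and $\overline{p_2 p_3} = L_{\tau^*}$, so $z^* \in \overline{p_2 p_3}$ realizes $d(z^*, \overline{p_1 p_4}) = d_\tau$, and $\diam\{p_1,\dots,p_4\} \leq \diam(\tau)$. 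Since $\cF_{\tau,2}$ refines $\cF_{\tau,1}$, the intermediate points $a(\tau^*)$ and $b(\tau^*)$ are themselves grandchild endpoints; the triangle inequality applied separately along the grandchild chain on $[a(\tau), a(\tau^*)]$, $[a(\tau^*), b(\tau^*)]$, and $[b(\tau^*), b(\tau)]$ gives
\[
\sum_{i=1}^{3} d(p_i, p_{i+1}) \leq \sum_{\tau' \in \cF_{\tau,2}} d(\gamma(a(\tau')), \gamma(b(\tau'))),
\]
so the excess $E := \sum_{i=1}^3 d(p_i, p_{i+1}) - d(p_1, p_4)$ is bounded above by the bracketed factor on the right-hand side of \eqref{dtau-triangle-bound}.

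Next, I would apply Proposition \ref{p:prop-4} with $\epsilon := \delta 2^{-J-6} = 2^{-116}$; the choice $\eta = 2^{-1200}$ from Note \ref{n:n2.4} is precisely calibrated so that $\eta < (\epsilon/10)^{10}$. Under the distance hypotheses $d(p_i, \{p_1, p_4\}) \geq \epsilon d(p_1, p_4)$ for $i \in \{2,3\}$, Proposition \ref{p:prop-4} combined with $z^* \in \overline{p_2 p_3}$ gives
\[
E \geq \frac{\epsilon^4 \eta^2}{10^{14} \diam(\tau)^3}\, d_\tau^4.
\]
Chaining this with the upper bound on $E$ from the previous paragraph and computing $\epsilon^{-4} = \delta^{-4} 2^{4J+24} = 2^{4J+64}$ (using $\delta = 2^{-10}$) yields the inequality of Lemma \ref{l:modified-prop-4}.

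The main obstacle is that the distance conditions of Proposition \ref{p:prop-4} can fail when $p_2$ happens to be Koranyi-close to $p_1$ (or $p_3$ close to $p_4$). If, say, $d(p_2, p_1) < \epsilon d(p_1, p_4)$, then using $\diam(\tau^*) \leq 2^{-J+4}/\delta \cdot \diam(\tau) = 2^{-86}\diam(\tau)$ and the fact that $L_{\tau^*}$ starts at $p_2$ within $\epsilon \diam(\tau)$ of $p_1 \in L_\tau$, one immediately obtains the rigidity bound $d_\tau \leq (\epsilon + 2^{-86})\diam(\tau) \leq 2^{-85}\diam(\tau)$. The separation between $\epsilon = 2^{-116}$ and the child/grandchild scale ratios $2^{-86}$ and $2^{-186}$, arranged by the choices $J = 100$ and $\delta = 2^{-10}$ in Note \ref{n:j-delta-const}, provides the margin needed to shift $p_2$ to the nearest grandchild endpoint satisfying the $\epsilon$-condition (such an endpoint exists because consecutive grandchild endpoints are $d$-close, at distance at most $2^{-2J+4}/\delta \cdot \diam(\tau) \ll \epsilon \diam(\tau)$), while keeping the modified $\overline{p_2 p_3}$ a sufficiently good approximation of $L_{\tau^*}$ that Proposition \ref{p:prop-4} applied to the new quadruple still captures $d_\tau^4$ up to a constant absorbed into the coefficient.
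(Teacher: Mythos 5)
Your setup matches the paper's essential idea: build a quadruple $(p_1,p_2,p_3,p_4) = (\gamma(a(\tau)), \gamma(a(\tau^*)), \gamma(b(\tau^*)), \gamma(b(\tau)))$ so that $\overline{p_1p_4}=L_\tau$ and $\overline{p_2p_3}=L_{\tau^*}$, bound the triangle excess $E$ from above by telescoping across the grandchild chain, and from below by Proposition \ref{p:prop-4}. Your constant bookkeeping ($\epsilon = \delta 2^{-J-6}=2^{-116}$, $\epsilon^{-4}=2^{4J+64}$) is also correct. You are also right that the sole obstacle is the $\epsilon$-separation hypothesis \eqref{eps-lower-bound}.

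However, your proposed fix for the case where separation fails has a genuine gap. You suggest shifting $p_2$ to the nearest grandchild endpoint satisfying the $\epsilon$-condition and claim that the modified $\overline{p_2'p_3}$ still captures $d_\tau^4$ up to a constant. This does not follow. Once $p_2$ is replaced by $p_2'$, the segment $\overline{p_2'p_3}$ is no longer $L_{\tau^*}$: it starts at a different point and points in a different horizontal direction. Two horizontal segments of length $\sim\ell$ whose starting points are $\alpha$ apart can diverge by $\sim\sqrt{\alpha\ell}$ in the middle; this is a fixed quantity depending on the shift magnitude, not on $d_\tau$. Since there is \emph{no lower bound on $d_\tau$} in this regime, the approximation error can dominate $d_\tau$, and Proposition \ref{p:prop-4} applied to the new quadruple bounds a quantity that has nothing to do with $d_\tau$. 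Your ``rigidity bound'' $d_\tau\le 2^{-85}\diam(\tau)$ only gives an upper bound on $d_\tau$, which is not what you need --- it certainly does not let you absorb the shift error.

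What the paper actually does is different and closes this gap cleanly. It first splits into two cases according to whether the excess $E'$ exceeds $\delta L 2^{-J-3}2^{-kJ}$. If the excess is large, the trivial bound $d_\tau\le 2\diam(\tau)$ (Lemma \ref{l:dtau-diam-bound}) already gives the lemma because the coefficient $10^{14}2^{4J+64}/\eta^2$ is enormous. If the excess is small, a contradiction argument shows that \emph{every} intermediate endpoint of $\cF_{\tau,1}$ is at distance $\ge\delta L 2^{-(k+1)J-2}$ from $\{\gamma(a(\tau)),\gamma(b(\tau))\}$: otherwise a child arc would have to double back, producing a grandchild endpoint far from the chord's endpoints, which forces a large excess and contradicts the case hypothesis. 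This yields the needed $\epsilon$-separation for all \emph{middle} children at once, without moving any points; and for the first/last child (where $a(\tau_1)=a(\tau)$ or $b(\tau_m)=b(\tau)$), the paper uses the degenerate quadruple $p_2=p_3=\gamma(b(\tau_1))$ (resp.\ $p_2=p_3=\gamma(a(\tau_m))$), whose remaining free point is an intermediate endpoint and hence covered by the separation estimate. You would need to replace your shift heuristic by this two-case analysis and contradiction argument to complete the proof.
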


\begin{proof}
  We let $\tau \in \cF_k$.  We first suppose that 
  \begin{align*}
    \left( \sum_{\tau' \in \cF_{\tau,2}} d(\gamma(a(\tau')),\gamma(b(\tau')))\right) - d(\gamma(a(\tau)),\gamma(b(\tau)))  \geq \delta L 2^{-\jump-3} 2^{-k\jump}.
  \end{align*}
  By the properties of the filtration and Lemma \ref{l:dtau-diam-bound}, we have
  \begin{align*}
    \frac{d_\tau^4}{\diam(\tau)^3} \leq 16\diam(\tau) \leq L2^{-k\jump+6}.
  \end{align*}
  We then get \eqref{dtau-triangle-bound}.  Thus, we may assume that
  \begin{align}
    \left( \sum_{\tau' \in \cF_{\tau,2}} d(\gamma(a(\tau')),\gamma(b(\tau'))) \right) - d(\gamma(a(\tau)),\gamma(b(\tau))) < \delta L 2^{-\jump-3} 2^{-k\jump}. \label{excess-small}
  \end{align}
  Let $\{\tau_i\}_{i=1}^m$ denote the subarcs of $\cF_{\tau,1}$ in order as denoted by the flow along $\bT$ (thus, $\gamma(a(\tau_1)) = \gamma(a(\tau))$, $\gamma(a(\tau_{i+1})) = \gamma(b(\tau_i))$, and $\gamma(b(\tau_m)) = \gamma(b(\tau))$).
  
  We define
  \begin{align*}
    \mathcal{P} := \bigcup_{i=1}^{m-1} \{\gamma(b(\tau_i))\},
  \end{align*}
  and we claim that
  \begin{align}
    d(\mathcal{P},\{\gamma(a(\tau)),\gamma(b(\tau))\}) \geq \delta L2^{-J-2} 2^{-k\jump}. \label{children-well-separated}
  \end{align}
  Indeed, suppose not.  Then there exists some point $z \in \mathcal{P}$ so that, say, $d(z,\gamma(a(\tau))) < \delta L 2^{-(k+1)\jump-2}$.  Let $\xi$ denote the subarc with endpoints $\gamma(a(\tau))$ and $z$.  Then $\xi$ contains some subarc of $\cF_{\tau,1}$
  and by the property of filtrations, we must have that
  \begin{align*}
    \diam\left(\xi\right) \geq \delta L2^{-(k+1)\jump}.
  \end{align*}
  Thus, there exists a point $w \in \xi$ so that
  \begin{align*}
    d(w,\{\gamma(a(\tau)),z\}) \geq \delta L 2^{-(k+1)\jump-2}.
  \end{align*}
  As the filtration covers all of $\bT$, there must exist some $\tilde{\tau} \in \cF_{\tau,2}$ so that $w \in \tilde{\tau}$.  We get by the triangle inequality, and the fact that $\diam(\tilde{\tau}) \leq L 2^{-(k+2)\jump+4}$, that
  \begin{align*}
    d(\gamma(a(\tilde{\tau})),\{\gamma(a(\tau)),z\}) \geq \delta L 2^{-(k+1)\jump-2} - L 2^{-(k+2)\jump+4} \geq \delta L 2^{-(k+1)\jump-3}.
  \end{align*}
  In the last inequality, we used the fact that $\jump = 100$ and $\delta = 2^{-10}$.  Now we have by repeated use of the triangle inequality that
  \begin{align*}
    \sum_{\tau' \in \cF_{\tau,2}} &d(\gamma(a(\tau')),\gamma(b(\tau'))) - d(\gamma(a(\tau)),\gamma(b(\tau))) \\
    &\geq d(\gamma(a(\tau)),\gamma(a(\tilde{\tau}))) + d(\gamma(a(\tilde{\tau})),z) + d(z,\gamma(b(\tau))) - d(\gamma(a(\tau)),\gamma(b(\tau))) \\
    &\geq \delta L 2^{-(k+1)\jump-3} + d(\gamma(a(\tau)),z) + d(z,\gamma(b(\tau))) - d(\gamma(a(\tau)),\gamma(b(\tau))) \\
    &\geq \delta L 2^{-(k+1)\jump-3},
  \end{align*}
  which is a contradiction of \eqref{excess-small}.  Thus, we may now assume \eqref{children-well-separated}.  This then gives the inequality
  \begin{align}
    d(P,\{\gamma(a(\tau)),\gamma(b(\tau))\}) \geq \delta 2^{-\jump-6} L 2^{-k\jump+4} \geq 2^{-\jump-16} d(\gamma(a(\tau)),\gamma(b(\tau))). \label{separation-assumption}
  \end{align}

  Let $i \in \{2,...,m-1\}$.  Using \eqref{separation-assumption} and Proposition \ref{p:prop-4} with $\epsilon = 2^{-\jump-16}$ and our choice of  $\eta < 2^{-10\jump-160}/10^{10}$, 
   we get that
  \begin{align*}
    \sup_{z \in L_{\tau_i}} &\frac{d(z,L_\tau)^4}{\diam(\tau)^3} \\
    &\leq 
    	 \frac{10^{14}2^{4\jump+64}}{ \eta^2} \left( d(\gamma(a(\tau)),\gamma(a(\tau_i))) + d(\gamma(a(\tau_i)),\gamma(b(\tau_i))) \right.\\
    &\qquad \left.+ d(\gamma(b(\tau_i)),\gamma(b(\tau))) -   d(\gamma(a(\tau)),\gamma(b(\tau)))\right) \\
    &\leq \frac{10^{14}2^{4\jump+64}}{ \eta^2} \left( \left( \sum_{\tau' \in \cF_{\tau,1}} d(\gamma(a(\tau_i)),\gamma(b(\tau_i)))\right) - d(\gamma(a(\tau)),\gamma(b(\tau))) \right) \\
    &\leq \frac{10^{14}2^{4\jump+64}}{ \eta^2} \left(  \left(\sum_{\tau' \in \cF_{\tau,2}} d(\gamma(a(\tau_i)),\gamma(b(\tau_i)))\right) - d(\gamma(a(\tau)),\gamma(b(\tau))) \right).
  \end{align*}
 To get the same bounds for $\tau_1$, apply  Proposition \ref{p:prop-4}  with $p_1=\gamma(a(\tau))$, $p_2=p_3=\gamma(b(\tau_1))$, and $p_4=\gamma(b(\tau))$.
 Similarly, for $\tau_m$.
\end{proof}

Given any arc of a filtration $\tau \in \cF$, we can define a sequence of subarcs intervals $\{\tau_j\}_{j=0}^\infty$ so that $\tau_0 = \tau$ and $\tau_j \in \cF_{\tau,j}$ is chosen so that $d_{\tau_j}$ is maximal among all subintervals of $\cF_{\tau,j}$.

\begin{lemma}
  Let $\tau \in \cF$.  Then
  \begin{align}
    \beta(\tau)\diam(\tau) \leq \sum_{k=0}^\infty d_{\tau_k}. \label{sum-df-lower-bound}
  \end{align}
\end{lemma}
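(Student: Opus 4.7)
The plan is to reduce the lemma to a pointwise bound: for every $t \in I_\tau$ I will show
\begin{align*}
d(\gamma(t), L_\tau) \leq \sum_{k=0}^\infty d_{\tau_k},
\end{align*}
and then take the supremum over $t \in I_\tau$ to conclude (recall $\beta(\tau) \diam(\tau) = \sup_t d(\gamma(t),L_\tau)$).

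To prove the pointwise bound, I would fix $t \in I_\tau$ and build a descending chain of ancestors $\tau = \sigma_0 \supset \sigma_1 \supset \sigma_2 \supset \cdots$ with $\sigma_k \in \cF_{\tau,k}$ and $t \in I_{\sigma_k}$ for every $k$. Such a chain exists because property (4) in Lemma \ref{l:build-filtration-arcs} guarantees that each $\cF_n$ covers $\bT$, while property (3) ensures that distinct subarcs at the same level can only overlap at endpoints; hence the children in $\cF_{\tau,k+1}$ of an arc $\sigma_k \in \cF_{\tau,k}$ tile $\sigma_k$ up to endpoints, and one can inductively select a child that still contains $t$.

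The crux is a telescoping inequality along this chain. The point $y_k := \gamma(a(\sigma_k))$ lies on $L_{\sigma_k}$ by the very definition of the horizontal interpolation. Since $\sigma_{k+1} \in \cF_{\sigma_k,1}$, the definition of $d_{\sigma_k}$ gives the one-sided bound
\begin{align*}
\sup_{z \in L_{\sigma_{k+1}}} d(z, L_{\sigma_k}) \leq d_{\sigma_k} \leq d_{\tau_k},
\end{align*}
where the second inequality uses the level-wise maximality built into the definition of $\tau_k$. A straightforward induction on $k$ (at each step, pick a nearest point of $L_{\sigma_j}$ to $L_{\sigma_{j+1}}$ and invoke the triangle inequality) then yields
\begin{align*}
d(y_k, L_\tau) \leq \sum_{j=0}^{k-1} d_{\sigma_j} \leq \sum_{j=0}^{k-1} d_{\tau_j}.
\end{align*}

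Finally I would let $k \to \infty$. Because $y_k$ and $\gamma(t)$ both lie in $\gamma(I_{\sigma_k})$ and $\diam(\sigma_k) < L2^{-k\jump+4} \to 0$ by property (2) of the filtration, $y_k \to \gamma(t)$. Passing to the limit in the displayed inequality gives the desired pointwise bound, which upon taking the supremum over $t$ proves \eqref{sum-df-lower-bound}. I do not anticipate a serious obstacle; everything reduces to a standard telescoping argument along the filtration tree, with the choice of $\tau_k$ as a level-wise maximizer of $d_{(\cdot)}$ engineered precisely so that the telescoping collapses to the stated sum.
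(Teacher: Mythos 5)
Your argument is correct and rests on the same idea as the paper's: a telescoping estimate along a descending chain of nested filtration arcs, using the level-wise maximality of $d_{\tau_k}$ to dominate each increment. The only cosmetic difference is in the bookkeeping: the paper recursively selects $\zeta_{k+1}\in\cF_{\zeta_k,1}$ maximizing $\beta(\zeta_{k+1})\diam(\zeta_{k+1})$ and telescopes the supremum-quantity $\beta(\zeta_k)\diam(\zeta_k)$ via the one-step inequality $\beta(\zeta_k)\diam(\zeta_k)\le\beta(\zeta_{k+1})\diam(\zeta_{k+1})+d_{\tau_k}$, whereas you fix a (near-)extremal parameter $t$, follow the chain of arcs containing it, and telescope the pointwise distance from a chosen point of $L_{\sigma_k}$ back to $L_\tau$ before letting $k\to\infty$; the slight strengthening you need, namely that $\sup_{z\in L_{\sigma_k}}d(z,L_\tau)\le\sum_{j<k}d_{\sigma_j}$, is exactly what your parenthetical about picking nearest points delivers. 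Both routes are sound and essentially equivalent in content.
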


\begin{proof}
  We recursively choose a sequence of intervals $\zeta_0 = \tau$ and $\zeta_{k+1} \in \cF_{\zeta_k,1}$ so that $\beta(\zeta_{k+1}) \diam(\zeta_{k+1})$ is maximal of all possible values.  It suffices to prove that
  \begin{align*}
    \beta(\zeta_k) \diam(\zeta_k) \leq \beta(\zeta_{k+1}) \diam(\zeta_{k+1}) + d_{\tau_k}.
  \end{align*}
  Indeed, as $\beta(\zeta_k) \leq 2$ when $\gamma$ is 1-Lipschitz, we have that $\beta(\zeta_k) \diam(\zeta_k) \leq 2\diam(\zeta_k) \to 0$.  Thus,
  \begin{align*}
    \sum_{k=0}^\infty d_{\tau_k} \geq \sum_{k=0}^\infty (\beta(\zeta_k) \diam(\zeta_k) - \beta(\zeta_{k+1}) \diam(\zeta_{k+1})) = \beta(\tau) \diam(\tau).
  \end{align*}
  We can bound
  \begin{align*}
    \beta(\zeta_k) \diam(\zeta_k) &= \sup_{z \in \zeta_k} d\left(z,L_{\zeta_k}\right) \\
    &\leq \max_{\tau' \in \cF_{\zeta_k,1}} \sup_{z \in \tau'} d(z,L_{\tau'}) + \max_{\tau' \in \cF_{\zeta_k,1}} \sup_{z \in L_{\tau'}} d\left(z,L_\tau\right) \\
    &\leq \beta(\zeta_{k+1}) \diam(\zeta_{k+1}) + d_{\tau_k}.
  \end{align*}
\end{proof}

We can now prove Proposition \ref{p:beta-filtration}.

\begin{proof}[Proof of Proposition \ref{p:beta-filtration}]
  By our choice of $\eta$, Lemma \ref{l:modified-prop-4} shows that,
  \begin{align*}
    \sum_{\tau \in \cF_n} \frac{d_{\tau}^4}{\diam(\tau)^3} \leq \frac{10^{14}2^{4\jump+64}}{ \eta^2} \left( \sum_{\tau \in \cF_{n+2}} d(\gamma(a(\tau)),\gamma(b(\tau))) - \sum_{\tau \in \cF_n} d(\gamma(a(\tau)),\gamma(b(\tau)))\right).
  \end{align*}
  Summing over $n$ we get that
  \begin{align}
    \sum_{\tau \in \cF} \frac{d_{\tau}^4}{\diam(\tau)^3} \leq \frac{10^{14}2^{4\jump+65}}{ \eta^2}  \sup_{n \in \N} \sum_{\tau \in \cF_n} d(\gamma(a(\tau)),\gamma(b(\tau))) \leq \frac{10^{14}2^{4\jump+65}}{ \eta^2}  \ell(\gamma). \label{e:sum-dtau-bound}
  \end{align}
  We can now compute in an $\ell_4$ fashion:
  \begin{align*}
    \left(\sum_{\tau \in \cF} \beta(\tau)^4 \diam(\tau)\right)^{1/4} &\overset{\eqref{sum-df-lower-bound}}{\leq} \left(\sum_{\tau \in \cF} \frac{\left( \sum_{k=0}^\infty d_{\tau_k} \right)^4}{\diam(\tau)^3} \right)^{1/4} \\
    &\leq \sum_{k=0}^\infty \left( \sum_{\tau \in \cF} \frac{d_{\tau_k}^4}{\diam(\tau)^3} \right)^{1/4} \\
    &\leq \sum_{k=0}^\infty 2^{-3(\jump+1) k/4}\left( \sum_{\tau \in \cF} \frac{d_{\tau_k}^4}{2^{-3(\jump+1) k}\diam(\tau)^3} \right)^{1/4} \\
    &\leq \sum_{k=0}^\infty 2^{-3(\jump+1) k/4}\left( \sum_{\tau \in \cF} \frac{d_{\tau_k}^4}{\diam(\tau_k)^3} \right)^{1/4} \\
    &\overset{\eqref{e:sum-dtau-bound}}{\leq} \frac{10^{14}2^{4\jump+65}}{ \eta^2}  \sum_{k=0}^\infty 2^{-3(J+1)k/4} \ell(\gamma)^{1/4} \\
    &\leq \frac{10^{14}2^{4\jump+66}}{ \eta^2}  \ell(\gamma)^{1/4}.
  \end{align*}
  In the last inequality, we used the fact that $\jump = 100$ to show that $\sum 2^{-3(\jump+1)k/4} < 2$.
\end{proof}

\section{Flat balls}\label{s:flat-balls}

\subsection{Geometric lemmas about arcs}

The following lemma states that if an arc $\tau$
is close to the horizontal line segment interpolating its endpoints, then this horizontal line segment is also close to 
$\tau$
all throughout.

\begin{lemma}\label{l:lemma-8}
  Let $\tau$ be a connected subarc.  Then
  \begin{align}
    \sup_{x \in L_\tau} d(x,\tau) \leq \beta(\tau) \diam(\tau). \label{reverse-close}
  \end{align}
Moreover, we have that the start-point of $L_\tau$ is the same as $\gamma(a_\tau)$, and the end-point of $L_\tau$ has distance at most $ \beta(\tau) \diam(\tau)$ to 
$\gamma(b_\tau)$.
 \end{lemma}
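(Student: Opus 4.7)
My plan is as follows. The start-point identification is immediate: setting $t = 0$ in the parametric description $L_\tau(t) = \gamma(a(\tau)) \cdot \delta_t \tilde\pi(\gamma(a(\tau))^{-1}\gamma(b(\tau)))$ yields $L_\tau(0) = \gamma(a(\tau))$. For the other two claims, I will first reduce to a convenient normal form by using left-invariance to translate $\gamma(a(\tau))$ to the origin, and then rotating around the $z$-axis --- these rotations $(x,y,z) \mapsto (R(x,y), z)$ preserve both $x^2+y^2$ and the cross-product $xy'-x'y$ appearing in the group law, so they are Heisenberg group isomorphisms and hence Koranyi isometries --- so that $\gamma(b(\tau)) = (x_0, 0, z_0)$ with $x_0 \geq 0$. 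In this normal form, $L_\tau = \{(u, 0, 0) : u \in [0, x_0]\}$ and its end-point is $(x_0, 0, 0)$.

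For the end-point distance claim, a direct computation with the group law gives, for $u \in [0, x_0]$,
\[
d((u,0,0), (x_0, 0, z_0)) = \bigl((x_0-u)^4 + \eta z_0^2\bigr)^{1/4},
\]
which is strictly decreasing in $u$ and minimized at $u = x_0$. Hence the end-point of $L_\tau$ is the closest point on $L_\tau$ to $\gamma(b(\tau))$, so its distance to $\gamma(b(\tau))$ equals $d(\gamma(b(\tau)), L_\tau)$, which is at most $\beta(\tau)\diam(\tau)$ by the definition of $\beta(\tau)$.

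For the main inequality, the plan is to show that the closest-point projection $P : \H \to L_\tau$ is single-valued and continuous, and then use connectedness. For $p = (a,b,c) \in \H$ the function
\[
u \mapsto d(p, (u,0,0))^4 = \bigl((a-u)^2 + b^2\bigr)^2 + \eta(c - ub/2)^2
\]
has second derivative $12(a-u)^2 + 4b^2 + \eta b^2/2 > 0$, hence is strictly convex on $\R$. Its restriction to the compact interval $[0, x_0]$ therefore admits a unique minimizer, establishing single-valuedness of $P$, and continuity of $P$ then follows from the standard subsequence argument exploiting strict convexity together with continuous dependence of the objective on $p$.

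By the earlier computations, $P(\gamma(a(\tau))) = P(0) = (0,0,0)$ is the start-point of $L_\tau$, and $P(\gamma(b(\tau))) = (x_0, 0, 0)$ is its end-point. Since $P \circ \gamma : I_\tau \to L_\tau$ is continuous with connected image containing both endpoints of the arc $L_\tau$, the image must equal all of $L_\tau$. Hence for every $x \in L_\tau$ there is some $s \in I_\tau$ with $P(\gamma(s)) = x$, giving $d(x, \tau) \leq d(x, \gamma(s)) = d(\gamma(s), L_\tau) \leq \beta(\tau)\diam(\tau)$. The main conceptual step will be verifying that the Koranyi closest-point projection onto a horizontal line segment is single-valued and continuous; once that is in hand, the remainder is a routine connectedness reduction.
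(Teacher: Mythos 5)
Your proposal is correct, and it takes a genuinely different route from the paper's proof, though both begin with the same normalization by left translation and rotation so that $\gamma(a(\tau)) = 0$, $\gamma(b(\tau)) = (x_0,0,z_0)$ with $x_0 \geq 0$, and $L_\tau = \{(u,0,0) : u \in [0,x_0]\}$. The paper works with the closed set
\begin{align*}
F := \left\{ (t,z) \in [a(\tau),b(\tau)] \times L_\tau : d(\gamma(t),z) \leq \beta(\tau) \diam(\tau) \right\},
\end{align*}
shows that $F$ meets both ends $\{z=(0,0,0)\}$ and $\{z=(x_0,0,0)\}$, that each vertical slice $F \cap (\{t\}\times L_\tau)$ is a nonempty interval (using the fact that Koranyi balls are convex subsets of $\R^3$ and $L_\tau$ is affine), and then proves via a topological argument that $F$ is connected, so its projection onto the $L_\tau$-factor is all of $L_\tau$. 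Your approach instead constructs an explicit single-valued, continuous nearest-point projection $P : \H \to L_\tau$, identifies $P(\gamma(a(\tau)))$ and $P(\gamma(b(\tau)))$ as the two endpoints of $L_\tau$, and then deduces from continuity and connectedness of $I_\tau$ that $P\circ\gamma$ is onto $L_\tau$. Both strategies are intermediate-value-theorem arguments powered by a convexity property of the Koranyi metric; yours isolates the convexity step in the scalar function $u \mapsto d(p,(u,0,0))^4$, which is more explicit and arguably cleaner, while the paper uses convexity of Koranyi balls as sets.

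One small slip: the second derivative $f''(u) = 12(a-u)^2 + (4 + \eta/2)b^2$ vanishes at $u=a$ when $b=0$, so the assertion ``$f''>0$, hence strictly convex'' is not literally valid. The conclusion is still correct: if $b\neq 0$ then $f''>0$ everywhere, and if $b=0$ then $f(u) = (a-u)^4 + \eta c^2$ is strictly convex despite $f''(a)=0$. Since what you actually need is strict convexity of $f$ on $[0,x_0]$ (and hence uniqueness of the minimizer), the argument goes through after this bookkeeping correction.
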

\begin{proof}
  By translation and rotation, we may suppose without loss of generality that the endpoints of $L_\tau$ are $(0,0,0)$ and $(l,0,0)$ and such that $\gamma(a(\tau)) = (0,0,0)$.  Consider the closed set
  \begin{align*}
    F := \left\{ (t,z) \in [a(\tau),b(\tau)] \times L_\tau : d(\gamma(t),z) \leq \beta(\tau) \diam(\tau) \right\}.
  \end{align*}
  It suffices to show that the projection of $F$ to the second factor is all of $L_\tau$.

  As $\gamma(a(\tau))$ is an endpoint of $L_\tau$, it follows that $F \cap ([a(\tau),b(\tau)] \times \{(0,0,0)\})$ is nonempty.  The other endpoint $(l,0,0)$ also satisfies $F \cap ([a(\tau),b(\tau)] \times \{(l,0,0)\}) \neq \emptyset$.  Indeed, we must have that $\gamma(b(\tau)) = (l,0,z)$ for some $z \in \R$.  Thus, it follows that
  \begin{align*}
    \beta(\tau) \diam(\tau) \geq d(\gamma(b(\tau)),L_\tau) = d(\gamma(b(\tau)),(l,0,0)).
  \end{align*}
  
  As $d(\gamma(t),L_\tau) \leq \beta(\tau) \diam(\tau)$ for all $t \in [a(\tau),b(\tau)]$, we get for all $t \in [a(\tau),b(\tau)]$ that $F \cap (\{t\} \times L_\tau) \neq \emptyset$.  In addition, as balls of the Koranyi metric are convex subsets of $\R^3$ (balls at the origin are convex and Heisenberg translations are affine) and $L_\tau$ is also an affine line segment, we get that $F \cap (\{t\} \times L_\tau)$ is a connected interval.

  We are now in the following situation: $F$ is a closed subset of a rectangle that intersects each vertical slice in an interval as well as intersecting the top and bottom sides, and we would like to show that $F$ intersects each horizontal slice.  To do so, it clearly suffices to prove that $F$ is connected.

  Suppose $F$ is not connected.  Then there exists a continuous surjection $f : F \to \{0,1\}$.  As $F$ intersects each vertical slice in a conected set, we have that $f$ is constant on vertical slices.  Thus, we may define a function $g : [a(\tau),b(\tau)] \to \{0,1\}$ by $g(t) = f(t,z)$ for $(t,z) \in F$.  This function $g$ is continuous because $F$ is closed.  As $[a(\tau),b(\tau)]$ is connected, $g$ must be constant.  Then $f$ must be constant, which is a contradiction of its surjectivity.  Thus, $F$ is connected, which finishes the proof.
\end{proof}

\begin{remark}\label{r:chord-arc}
The remainder of this section relies on the above lemma and two facts.  The first fact is that, for any ball $B \subset \H$ and any $\lambda > 1$, we have
\begin{align}
  \diam(\lambda B) \leq \lambda \diam(B). \label{e:diam-scale}
\end{align}
The second fact is that for a horizontal line (segment) $L:[0,T]\to \H$ we have a constant $C_\H$ 
\begin{align}\label{e:chord-arc}
C_\H^{-1} |t_1-t_2|\leq \dist(L(t_1), L(t_2))\leq C_\H|t_1-t_2|\,. 
\end{align}
Indeed, this holds with $C_\H=1$ as $L$ is isometric to the Euclidean interval $[0,T]$.
The above lemma and these  facts will be the only properties of $\H$ that we will use.  Otherwise, it is a purely metric section i.e. the results within it hold in any metric space. Below, we make use of  the fact that $C_\H=1$ and omit the constant, otherwise $C_\H$ would have appeared in eq. \eqref{e:beta-small-for-chord-arc} and its derivatives.
\end{remark}

\begin{lemma}\label{l:diam-large-NEW}
Let $B\in\smallballs$ 
be a ball of radius $r$.
Let $Q=Q(B)$, and in particular
suppose $3B\supset Q \supset 2B$.
Suppose 
$\tau'\in \Lambda'(Q)$
and $\tau'\ni\cent(B)$.  
Suppose further that
\begin{align}\label{e:beta-small-for-chord-arc}
\beta(\tau')\diam(\tau')<h<\frac{1}{10}r 
\end{align}
Then 
there is an arc $\tilde\tau \subset \tau'$ with image in $2B$ such that $\diam(\tilde\tau)\geq 4r-10h$
\end{lemma}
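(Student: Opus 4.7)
The plan is to pass from $\tau'$ to the horizontal line segment $L_{\tau'}$---which is isometric to a real interval by Remark~\ref{r:chord-arc}---and use the two-sided approximation between them (the hypothesis $\beta(\tau')\diam(\tau')<h$ together with Lemma~\ref{l:lemma-8}) to first locate a long sub-chord of $L_{\tau'}$ inside $B(\cent(B),2r-h)\subset 2B$, and then transfer this to a long sub-arc of $\tau'$ inside $2B$.

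Set $x_0:=\cent(B)$ and fix $t_0\in I_{\tau'}$ with $\gamma(t_0)=x_0$.  Parametrize $L_{\tau'}\colon [0,T]\to\H$ by arclength with $L_{\tau'}(0)=\gamma(a(\tau'))$, and let $s_0\in[0,T]$ be such that $p_0:=L_{\tau'}(s_0)$ satisfies $d(p_0,x_0)\leq\beta(\tau')\diam(\tau')<h$ (possible since $x_0\in\tau'$).  The construction of $\tau'\in\Lambda'(Q)$ gives $\tau'\supseteq\tau$ for some $\tau\in\Lambda(Q)$ whose endpoints sit on $\partial Q\subset\H\setminus\mathrm{int}(2B)$, and each component of $\tau'\setminus\tau$ has diameter at most $\delta\diam(\tau)\leq 5r\delta$; combined with Lemma~\ref{l:lemma-8} (which puts $L_{\tau'}(T)$ within $h$ of $\gamma(b(\tau'))$), this forces $s_0,T-s_0\geq 2r-h-5r\delta$.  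By 1-Lipschitzness of $d(L_{\tau'}(\cdot),x_0)$ on the arclength parameter, whenever $|s-s_0|\leq 2r-2h$ one has $d(L_{\tau'}(s),x_0)\leq(2r-2h)+h=2r-h$; hence the interval $J:=[s_0-(2r-2h),s_0+(2r-2h)]\cap[0,T]$ is mapped by $L_{\tau'}$ into $B(x_0,2r-h)\subset 2B$, and (using $\delta=2^{-10}$ and $h<r/10$) has length at least $4r-10h$ with room to spare.

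Finally, let $\tilde\tau:=\gamma\vert_{[u,v]}$ where $[u,v]\subseteq I_{\tau'}$ is the maximal closed interval containing $t_0$ with $\gamma([u,v])\subseteq 2B$.  Whether $u=a(\tau')$ or $\gamma(u)\in\partial(2B)$ (and similarly for $v$), one has $d(\gamma(u),x_0),d(\gamma(v),x_0)\geq 2r-5r\delta$; approximating $\gamma(u),\gamma(v)$ by nearest points $\tilde u=L_{\tau'}(s_u), \tilde v=L_{\tau'}(s_v)\in L_{\tau'}$ yields $|s_u-s_0|,|s_v-s_0|\geq 2r-5r\delta-2h$.  The main obstacle will be to rule out the scenario in which $s_u$ and $s_v$ lie on the \emph{same} side of $s_0$, since otherwise $\tilde u,\tilde v$ could cluster near one end of $L_{\tau'}$ and $d(\tilde u,\tilde v)$ would only be of order $h$.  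To handle this I would invoke the connectedness of the closed set $F:=\{(t,z)\in I_{\tau'}\times L_{\tau'}:d(\gamma(t),z)\leq h\}$ from the proof of Lemma~\ref{l:lemma-8}: $F$ is connected, has connected vertical fibers, contains $(a(\tau'),L_{\tau'}(0))$, and contains a point with second coordinate within $h$ of $L_{\tau'}(T)$ over $t=b(\tau')$; this forces the $L_{\tau'}$-parameter of $\gamma(t)$ to sweep from near $0$ through $s_0$ to near $T$ as $t$ traverses $[u,v]$, placing $s_u$ and $s_v$ on opposite sides of $s_0$.  Combining these facts, $\diam(\tilde\tau)\geq d(\gamma(u),\gamma(v))\geq|s_u-s_v|-2h\geq(4r-10r\delta-4h)-2h\geq 4r-10h$, as required.
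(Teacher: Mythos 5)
Your plan is broadly aligned with the paper's---both arguments reduce to the chord-arc structure of $L_{\tau'}$, Lemma~\ref{l:lemma-8}, and a topological argument that identifies a long subarc inside $2B$---but the last step contains a genuine gap that breaks the proof.

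The problem is the definition of $\tilde\tau$ as the maximal closed interval \emph{containing} $t_0$ with image in $2B$. You correctly flag that the argument collapses unless $s_u$ and $s_v$ lie on opposite sides of $s_0$, but the connectedness of $F$ does not rule this out. Nothing prevents $\gamma$ from entering $2B$, touching $\cent(B)$ at $t_0$, and then \emph{exiting $2B$ back on the same end of the tube $C(L_{\tau'},h)$ it came in from}; the curve can then re-enter $2B$ later and traverse the full chord. In that configuration the connected component of $\gamma^{-1}(2B)$ containing $t_0$ has both $\gamma(u)$ and $\gamma(v)$ on the same side of $s_0$, both at parameter distance roughly $2r$ from $s_0$, and $\diam(\tilde\tau)$ is only on the order of $2r$ rather than $4r-10h$. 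Connectedness of $F$ (which concerns all of $I_{\tau'}\times L_{\tau'}$) implies the fiber parameter sweeps from near $0$ to near $T$ \emph{over all of} $I_{\tau'}$, not over $[u,v]$; the sweep may happen entirely during a later excursion into $2B$. Your appeal to ``$F$ contains $(a(\tau'),L_{\tau'}(0))$ and a point over $b(\tau')$ near $L_{\tau'}(T)$'' constrains the fibers over the endpoints of $\tau'$, not over $u$ and $v$.

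The paper's proof sidesteps this precisely by \emph{not} requiring the selected arc to contain $\cent(B)$. It lets $x$ and $z$ be the nearest points on $L_{\tau'}$ to $\gamma(a_\tau)$ and $\gamma(b_\tau)$, shows (via the fact that $\frac12 B$ disconnects $C(L_{\tau'},h)$ and that $\gamma(a_\tau),\gamma(b_\tau)\in\partial Q$ fall in different components) that $\dist(x,z)\ge 4r-4h$, and then extracts a subarc $\tau_1\subset\tau$ joining $B(x,\tfrac32 h)$ to $B(z,\tfrac32 h)$ that stays inside $2B$---the disconnecting spheres $\partial B(x,\tfrac32 h)$, $\partial B(z,\tfrac32 h)$ let one choose the ``last crossing,'' which need not pass through $\cent(B)$. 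To fix your argument you would have to abandon the maximal-interval-through-$t_0$ definition and instead select a subarc of $\tau'$ that joins the two ends of the chord of $L_{\tau'}\cap 2B$, which is essentially the paper's construction.
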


\begin{proof}
Let  $L=L_{\tau'}$ and 
$C(L,h)=\{P\in\H : \dist(P,L)<h\}$.
By our assumption, 
for all image points $t$ of $\tau'$ we have $t\in C(L,h)$.

By definition, we know that $\tau'$ is an extension of an arc $\tau\in \Lambda(Q)$.  
Since $\gamma(a_\tau), \gamma(b_\tau) \in \partial Q$, and using  Lemma \ref{l:build-filtration-arcs} (and say, $\delta<1/10$) we deduce that $\tau\ni\cent(B)$. The arc $\tilde\tau$ will eventually be a sub arc of $\tau$.  We argue its existence as follows.

First, note that 
$$\dist(\gamma(a_\tau),\cent(B))\geq 2r,\ \ \ \ \ \ \dist(\gamma(b_\tau),\cent(B))\geq 2r.$$   
This implies that $\diam(\tau)\geq 2r$, 
which by Lemma \ref{l:lemma-8} implies that 
$\diam(L_{\tau'})\geq 2r-2h$. 
Using \eqref{e:chord-arc}, the second part of the statement of   Lemma \ref{l:lemma-8}, 
and that $L_{\tau'}$ starts at $\gamma(a_{\tau'})$, 
we have that
$$\dist(\gamma(a_{\tau'}),\gamma(b_{\tau'}))\geq 2r-3h\,.$$
Using Lemma \ref{l:build-filtration-arcs},
we have
$$\dist(\gamma(a_{\tau}),\gamma(b_{\tau}))\geq (2r-3h)-2\delta\diam(\tau) = 2r-3h-12r\delta \,.$$
Using $\delta\leq 1/100$, we get 
$$\dist(\gamma(a_{\tau}),\gamma(b_{\tau}))> \frac32 r.$$
Let $x, z$ be the closest points on $L$ to $\gamma(a(\tau)), \gamma(b(\tau))$ respectively. 
Let $y$ be the closest point on $L$ to $\cent(B)$. 
We will show
\begin{equation}\label{e:x-and-y-far}
\dist(x,z) \geq  4r-4h. 
\end{equation}
Indeed, 
$\gamma(a_{\tau}),\gamma(b_{\tau})\in\partial Q\cap C(L,h)$
and so, using 
Remark \ref{r:chord-arc} we deduce that 
$\gamma(a_{\tau}),\gamma(b_{\tau})$ are in different components of $C(L,h)\setminus \frac12 B$.
Thus, if we consider the order given by $L$, we have $x<y<z$ and so
$\dist(x,z) =\dist(x,y)+ \dist(y,z) \geq 2r-2h + 2r-2h= 4r-4h$, giving \eqref{e:x-and-y-far} . 

We have that $\tau$ connects between the  balls $\ball(x,h)$ and $\ball(z,h)$.  
In particular, there is a subarc of $\tau$ connecting $\ball(x,\frac32 h)$ and $\ball(z, \frac32 h)$ which does not leave $2B$:  this follows from the fact that $C(L,h)$ contains the image of $\tau$ and each of the spheres 
$\partial \ball(x,\frac32 h)$ and $\partial \ball(z,\frac32 h)$ disconnects $C(L,h)$.
Call such  an arc $\tau_1$.

Then, 
$\dist(\gamma(a(\tau_1)),x)<\frac32 h$, and $\dist(\gamma(b(\tau_1)), z)<\frac32 h$. 
In total we have 
$$\diam(\tau_1)\geq 
\dist(\gamma(a(\tau_1)),\gamma(b(\tau_1)))
\geq \dist(x,z) -3h
\geq 4r-7h\,.$$
Take $\tilde\tau=\tau_1$.
\end{proof}
%


Recall that we have fixed  $\epsilonSubS=10^{-10}$ (see Note \ref{n:epsilon-s}). This part of the paper is where we start to use this value.  As is evident below, any sufficiently small constant would have sufficed.

\begin{lemma}\label{l:large-subsarc}
Let $B\in \flatballs$ be a ball of radius $r$ and $Q=Q(B)$.
If
$\xi,\tau\in \Lambda(Q)$, 
$\tau\ni\cent(B)$
  and $\tau$ has extension to $\tau'\in\Lambda'(Q)$ 
such that $\beta(\tau')< \epsilonSubS\beta_\Gamma(B)$,
and 
there is a point $x\in \xi\cap 2B$ such that
\begin{align}\label{e:comparing-L-to-beta-dist}
d(x,L_{\tau'})>100\epsilonSubS\beta_\Gamma(B)\diam(\tau')>100\beta(\tau')\diam(\tau')
\end{align}
 then 
 there is  a sub-arc $\check{\xi}\subset \xi$ with image inside $2B$  of diameter
 $$\diam(\check{\xi})>20 \epsilonSubS\beta_\Gamma(B)\diam(B)$$ 
 so that  
$$ d(\check{\xi},\tau')> 20 \epsilonSubS\beta_\Gamma(B)\diam(B)$$ 
\end{lemma}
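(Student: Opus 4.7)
My plan is to take $\check{\xi}$ to be a maximal sub-arc of $\xi$ around $x$ that stays in $2B$ while maintaining a quantitative distance from $\tau'$. First, I would unpack the hypothesis using Lemma \ref{l:lemma-8} applied to $\tau'$: since $\sup_{y \in \tau'} d(y, L_{\tau'}) \leq \beta(\tau')\diam(\tau') < \epsilonSubS\beta_\Gamma(B)\diam(\tau')$, the reverse triangle inequality converts $d(x,L_{\tau'}) > 100\epsilonSubS\beta_\Gamma(B)\diam(\tau')$ into $d(x,\tau') > 99\epsilonSubS\beta_\Gamma(B)\diam(\tau')$. Because $\tau \ni \cent(B)$ has endpoints on $\partial Q$ (which lies outside $2B$), we get $\diam(\tau') \geq \diam(\tau) \geq \diam(B)$, and hence $d(x,\tau') > 99\epsilonSubS\beta_\Gamma(B)\diam(B)$. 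I would then define $\check{\xi}$ as the closure of the connected component, containing the parameter of $x$, of
\[
\{s \in I_\xi : \gamma(s) \in 2B ~\text{and}~ d(\gamma(s), \tau') > 20\epsilonSubS\beta_\Gamma(B)\diam(B)\}.
\]

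By construction $\check{\xi} \subset 2B$ and $d(\check{\xi}, \tau') \geq 20\epsilonSubS\beta_\Gamma(B)\diam(B)$, so the two easier conclusions are immediate. The main task is the lower bound on $\diam(\check{\xi})$. Each endpoint $e_i$ of $\check{\xi}$ is of one of two types: either the $\tau'$-distance constraint becomes tight, so that $d(\gamma(e_i), \tau') = 20\epsilonSubS\beta_\Gamma(B)\diam(B)$ (\emph{interior type}), or $\gamma(e_i) \in \partial(2B)$ (\emph{boundary type}). If at least one endpoint is of interior type, the reverse triangle inequality gives
\[
d(x, \gamma(e_i)) \geq d(x, \tau') - d(\gamma(e_i), \tau') > 79\epsilonSubS\beta_\Gamma(B)\diam(B),
\]
so $\diam(\check{\xi}) \geq d(x,\gamma(e_i)) > 20\epsilonSubS\beta_\Gamma(B)\diam(B)$ and we are done.

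The substantive case is when both endpoints of $\check{\xi}$ are of boundary type, and this is where the flatness hypothesis $B \in \flatballs$ is used. The extension $\xi' \in \Lambda'(Q)$ then satisfies $\beta(\xi') < \epsilonSubS\beta_\Gamma(B)$, so by Lemma \ref{l:lemma-8} the image of $\xi$ lies in a tube of radius at most $\epsilonSubS\beta_\Gamma(B)\diam(\xi')$ around the horizontal segment $L_{\xi'}$. Since $\xi \in \Lambda(Q)$ meets $B$ at some point $p$, the line $L_{\xi'}$ must pass within this tube width of $p$, and so $L_{\xi'} \cap 2B$ is essentially a full chord of $2B$ of length comparable to $r$. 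An argument in the spirit of Lemma \ref{l:diam-large-NEW}---matching $\check{\xi}$ against this chord and noting that its boundary-type endpoints are forced to sit on $\partial(2B)$ at locations close to where $L_{\xi'}$ exits $2B$---then yields $\diam(\check{\xi}) \gtrsim r$, which vastly exceeds $40\epsilonSubS\beta_\Gamma(B) r = 20\epsilonSubS\beta_\Gamma(B)\diam(B)$ given $\epsilonSubS = 10^{-10}$. The principal obstacle is executing this boundary-type case cleanly in the Heisenberg geometry, where the variation of distance between two horizontal lines is governed by the Koranyi metric and is less straightforward than its Euclidean analogue.
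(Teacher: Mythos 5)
Your opening reduction matches the paper's: from $d(x,L_{\tau'})>100\epsilonSubS\beta_\Gamma(B)\diam(\tau')$ and $\sup_{z\in\tau'}d(z,L_{\tau'})\le\beta(\tau')\diam(\tau')<\epsilonSubS\beta_\Gamma(B)\diam(\tau')$ one gets $d(x,\tau')>99\epsilonSubS\beta_\Gamma(B)\diam(\tau')\ge 99\epsilonSubS\beta_\Gamma(B)\diam(B)$. (One quibble: the inequality you need is the \emph{definition} of $\beta(\tau')$, not Lemma~\ref{l:lemma-8}, which gives the reverse containment $\sup_{z\in L_{\tau'}}d(z,\tau')\le\beta(\tau')\diam(\tau')$; the paper makes the same imprecise citation, so this is cosmetic.) The paper's proof then ends almost immediately: having established the distance lower bound, it produces $\check{\xi}$ using only that $\diam(\xi)\ge r$, $\xi\cap B\ne\emptyset$, and that $\Theta:=20\epsilonSubS\beta_\Gamma(B)\diam(B)$ is minuscule compared with $r$. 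The intended construction is simply the subarc of $\xi$ starting at $x$ and stopping at the first exit from $\ball(x,2\Theta)$; such an exit occurs because $\diam(\xi)\ge r>4\Theta$, the resulting arc has diameter at least $2\Theta>\Theta$, and every point of it is within $2\Theta$ of $x$, hence at distance $>(99/20-2)\Theta>\Theta$ from $\tau'$.

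Your version diverges here, and this is where a genuine gap appears. You define $\check{\xi}$ as a connected component of $\{s\in I_\xi:\gamma(s)\in 2B,\ d(\gamma(s),\tau')>\Theta\}$ and classify endpoints as interior or boundary type. The interior-type bound is fine. But for the boundary-type case you invoke the flatness of the extension $\xi'$ and sketch a chord argument in the spirit of Lemma~\ref{l:diam-large-NEW}. This is not what the paper does (the flatness of $\xi'$ plays \emph{no} role in the paper's proof of Lemma~\ref{l:large-subsarc}; only $\beta(\tau')<\epsilonSubS\beta_\Gamma(B)$ for the central arc $\tau$ is used) and, as you yourself acknowledge, your sketch is not carried out: even granting that the image of $\xi$ lies in a thin tube around the segment $L_{\xi'}$, you have not ruled out that $\xi$ crosses $\partial(2B)$ twice near the same exit point of $L_{\xi'}$, which would make $\check{\xi}$ short. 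Beyond the incompleteness, the heavier machinery is unnecessary: in the only application of this lemma (Proposition~\ref{p:large-sum-over-kids}) the point $x$ is chosen in $B$, not merely in $2B$, so $\ball(x,2\Theta)\subset 2B$ outright and the boundary-type case never occurs; the paper's ``$\epsilonSubS$ is sufficiently small'' remark is tacitly relying on this. A clean repair of your argument is therefore to avoid the connected-component definition and instead take $\check{\xi}$ to be the subarc of $\xi$ through $x$ inside $\ball(x,2\Theta)$, noting $\ball(x,2\Theta)\subset 2B$, rather than to appeal to flatness of $\xi'$.
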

\begin{proof}
First recall  that $\diam(\xi)\geq r$  and $\xi\cap B\neq \emptyset$. Thus,  
as $\epsilonSubS$ is sufficiently small, 
it suffices to show that
$d(x,\tau')>40 \epsilonSubS\beta_\Gamma(B)\diam(B)$ to get the lemma. We now check this:

Equation \eqref{e:comparing-L-to-beta-dist} together with Lemma \ref{l:lemma-8} yield
\begin{align*}
d(x,\tau')>100\epsilonSubS\beta_\Gamma(B)\diam(\tau') - \beta(\tau')\diam(\tau')=99\epsilonSubS\beta_\Gamma(B)\diam(\tau')
\end{align*}
The lemma then follows as $\diam(\tau')\geq 2r\geq  \diam(B)$.

\end{proof}

\begin{lemma}\label{l:large-sum-1}
Suppose $\xi,\tau,B,Q,r$ are as in Lemma \ref{l:large-subsarc}. 
In addition suppose that 
$\tau\ni\cent(B)$, and that $\tilde{\tau}$ is as in Lemma \ref{l:diam-large-NEW}. 
Let $E$ be the parts of the images of $\xi$ and $\tilde{\tau}$ inside $2B$.
Then the following holds.
If we cover $E$ with balls $\{B_i\}$ such that $\diam(B_i)<10\epsilonSubS\beta_\Gamma(B)\diam(B)$ then 
\begin{align*}
\sum\limits_i \diam(B_i) \geq 4r + \epsilonSubS\beta_\Gamma(B)\diam(B)
\end{align*}
\end{lemma}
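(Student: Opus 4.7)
The plan is to split the finite cover $\{B_i\}$ into two disjoint subcollections—one covering the horizontal arc $\tilde\tau$ (inside $2B$, of diameter roughly $4r$) and one covering the subarc $\check\xi \subset \xi$ produced by Lemma \ref{l:large-subsarc}—and then estimate each group separately using the fact that a connected set covered by balls has diameter at most the sum of the diameters of the covering balls. First I would apply Lemma \ref{l:diam-large-NEW} with $h$ chosen just slightly larger than $\beta(\tau')\diam(\tau')$; this produces $\tilde\tau \subset \tau'$ whose image lies in $2B$ with $\diam(\tilde\tau) \geq 4r - 10h$. Because $\beta(\tau') < \epsilonSubS\beta_\Gamma(B)$, and because $\diam(\tau')$ is comparable to $r$ (from $\diam(\tau) \leq 5r$ in \eqref{e:tauQ-rad-bound} together with the fact that the extension from $\tau$ to $\tau'$ grows the diameter by at most a factor $1+2\delta$), we can guarantee that $10h$ is a harmlessly small multiple of $\epsilonSubS \beta_\Gamma(B) \diam(B)$, with smallness controlled by $\epsilonSubS = 10^{-10}$. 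In parallel, Lemma \ref{l:large-subsarc} gives $\check\xi \subset \xi$ inside $2B$ with $\diam(\check\xi) > 20\epsilonSubS\beta_\Gamma(B)\diam(B)$ and $d(\check\xi, \tau') > 20\epsilonSubS\beta_\Gamma(B)\diam(B)$.

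Next I would use these separation estimates to force a disjoint partition of the cover. Since $\tilde\tau \subset \tau'$, we have $d(\check\xi, \tilde\tau) > 20\epsilonSubS \beta_\Gamma(B)\diam(B)$, while each $B_i$ has $\diam(B_i) < 10\epsilonSubS \beta_\Gamma(B)\diam(B)$; hence no single $B_i$ can intersect both $\check\xi$ and $\tilde\tau$. Setting
\begin{align*}
\mathcal{C}_{\check\xi} := \{B_i : B_i \cap \check\xi \neq \emptyset\}, \qquad \mathcal{C}_{\tilde\tau} := \{B_i : B_i \cap \tilde\tau \neq \emptyset\},
\end{align*}
these collections are disjoint, and (after passing to a finite subcover using compactness of the images) they cover $\check\xi$ and $\tilde\tau$ respectively.

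The third step is to pass from cover to diameter via connectedness. For any connected set $X$ covered by finitely many balls $\{B_j\}$ each meeting $X$, the chain argument (two points in $X$ can always be joined by a chain of overlapping $B_j$'s, using that the chain-components of the cover are open in $X$) gives $\diam(X) \leq \sum_j \diam(B_j)$. Applying this to the connected arcs $\check\xi$ and $\tilde\tau$ separately yields
\begin{align*}
\sum_{B_i \in \mathcal{C}_{\check\xi}} \diam(B_i) \geq \diam(\check\xi) > 20\epsilonSubS\beta_\Gamma(B)\diam(B), \qquad \sum_{B_i \in \mathcal{C}_{\tilde\tau}} \diam(B_i) \geq \diam(\tilde\tau) \geq 4r - 10h.
\end{align*}
Adding and using disjointness of $\mathcal{C}_{\check\xi}$ and $\mathcal{C}_{\tilde\tau}$ gives $\sum_i \diam(B_i) \geq 4r - 10h + 20\epsilonSubS\beta_\Gamma(B)\diam(B)$, and by the smallness of $h$ from Step 1 this exceeds $4r + \epsilonSubS\beta_\Gamma(B)\diam(B)$.

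The main obstacle I anticipate is bookkeeping for the constants: one needs $10h < 19\epsilonSubS\beta_\Gamma(B)\diam(B)$ to close the inequality, and this forces a careful use of the bound $\diam(\tau') \lesssim r$ together with the extreme smallness of $\epsilonSubS$. The partition/separation argument and the connectedness chain argument are routine once this constant juggling is set up; everything else reduces to invoking Lemmas \ref{l:diam-large-NEW} and \ref{l:large-subsarc} and assembling the two contributions.
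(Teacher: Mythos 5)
Your approach matches the paper's: split the cover $\{B_i\}$ into two disjoint subcollections according to which of $\tilde\tau$ or $\check\xi$ each ball meets (disjointness forced by the separation $d(\check\xi,\tau')>20\epsilonSubS\beta_\Gamma(B)\diam(B)$ being larger than any $\diam(B_i)<10\epsilonSubS\beta_\Gamma(B)\diam(B)$), lower-bound each subsum by the diameter of the connected arc it covers, and add. The paper's proof is exactly this in two lines, so the strategy is right.

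However, the constant bookkeeping you explicitly defer to the end does not close with the mechanism you propose, and you have misidentified the source of slack. Tracking the constants: take $h$ just above $\beta(\tau')\diam(\tau')$. Since $B\in\flatballs$ gives $\beta(\tau')<\epsilonSubS\beta_\Gamma(B)$, this makes $10h$ slightly less than $10\epsilonSubS\beta_\Gamma(B)\diam(\tau')$. But $\diam(\tau')$ can be as large as roughly $5r=2.5\diam(B)$ (from $\diam(\tau)\le\diam(Q)<5r$ plus the extension by Lemma \ref{l:build-filtration-arcs}(5)), so $10h$ can be roughly $25\epsilonSubS\beta_\Gamma(B)\diam(B)$. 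Meanwhile Lemma \ref{l:large-subsarc} as stated only gives $\diam(\check\xi)>20\epsilonSubS\beta_\Gamma(B)\diam(B)$, so the sum $\diam(\tilde\tau)+\diam(\check\xi)$ may come out below $4r$. Your suggested remedies fail: the smallness of $\epsilonSubS$ is irrelevant because $\epsilonSubS$ appears as a common factor on both sides, and the bound $\diam(\tau')\lesssim r$ gives at best $\diam(\tau')/\diam(B)\lesssim 2.5$, which is still too large. The actual fix is to strengthen Lemma \ref{l:large-subsarc}: its proof in fact shows $d(x,\tau')>99\epsilonSubS\beta_\Gamma(B)\diam(\tau')$, so (since $\diam(\xi)\ge r$) one can take $\check\xi$ with $\diam(\check\xi)>20\epsilonSubS\beta_\Gamma(B)\diam(\tau')$, i.e.\ with $\diam(\tau')$ in place of $\diam(B)$. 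With that, $\diam(\tilde\tau)+\diam(\check\xi)>4r+(20-10)\epsilonSubS\beta_\Gamma(B)\diam(\tau')\ge 4r+\epsilonSubS\beta_\Gamma(B)\diam(B)$. This is implicitly what the paper's proof of Lemma \ref{l:large-sum-1} does when it writes $\diam(\check\xi)\ge 20\epsilonSubS\beta_\Gamma(B)\diam(\tau')$, a bound that is stronger than the stated lemma but available from its proof.
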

\begin{proof}
First note that a ball $B_i$ above can only intersect at most one of the images of $\tilde{\tau}$ or $\check{\xi}$.
We now use the conclusions of Lemmas  \ref{l:diam-large-NEW} and \ref{l:large-subsarc} as follows.

\begin{align*}
\sum\limits_i \diam(B_i) 
&=
\sum\limits_{B_i\cap\tilde{\tau}\neq\emptyset} \ \diam(B_i) + 
\sum\limits_{B_i\cap\check{\xi}\neq\emptyset} \ \diam(B_i) \\
&\geq \diam(\tilde{\tau}) + \diam(\check{\xi})\geq 
4r + (-10  +20)\epsilonSubS\beta_\Gamma(B)\diam(\tau')\\
&\geq
4r + \epsilonSubS\beta_\Gamma(B)\diam(B)
\end{align*}
In the last inequality we  used that $\diam(\tau')\geq 2r \geq \diam(B)$.
\end{proof}

The lemmas above combine together to give the following proposition.
\begin{proposition}\label{p:large-sum-over-kids}
Suppose $B\in \flatballs$ with radius $r$, and $Q=Q(B)$.
$\Lambda(Q)\ni\tau\ni\cent(B)$. 
Then there is a $\xi\in \Lambda(Q)$ such that 
if $\tilde{\tau}$ is  as in   Lemma \ref{l:diam-large-NEW} and
$E$ is  the parts of the images of $\xi \cup \tilde\tau$ inside $2B$ as in Lemma \ref{l:large-subsarc},
then, the following holds.
If 
we cover $E$ with balls $\{B_i\}$ such that 
$\diam(B_i)<10\epsilonSubS\beta_\Gamma(B)\diam(B)$, then
\begin{align}\label{e:1-large-sum-over-kids}
\sum\limits_i \diam(B_i) 
\geq 
4r + \epsilonSubS\beta_\Gamma(B)\diam(B).
\end{align}
\end{proposition}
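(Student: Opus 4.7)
The plan is to show that the proposition is essentially a repackaging of Lemmas \ref{l:diam-large-NEW}, \ref{l:large-subsarc}, and \ref{l:large-sum-1}. All three pieces slot together once the arc $\xi$ has been exhibited, after which the required conclusion \eqref{e:1-large-sum-over-kids} is literally the output of Lemma \ref{l:large-sum-1}. Thus the entire task reduces to producing $\xi \in \Lambda(Q)$ satisfying the distance hypothesis of Lemma \ref{l:large-subsarc}.

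To produce $\xi$, first let $\tau' \in \Lambda'(Q)$ be the extension of $\tau$ given by the construction after Lemma \ref{l:prefiltration}; note $\tau' \ni \cent(B)$ and, because $B \in \flatballs$, we have $\beta(\tau') < \epsilonSubS \beta_\Gamma(B)$. Applying the definition of $\beta_\Gamma(B)$ to the horizontal line $L_{\tau'}$ yields
\begin{align*}
  \sup_{y \in \Gamma \cap B} d(y, L_{\tau'}) \geq \beta_\Gamma(B) \diam(B),
\end{align*}
so I may select a point $x \in \Gamma \cap B$ with $d(x, L_{\tau'}) \geq \tfrac{1}{2}\beta_\Gamma(B) \diam(B) = \beta_\Gamma(B) r$. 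Since $x \in B \subset 2B \subset Q$, the point $x = \gamma(t)$ lies in some connected component of $\gamma^{-1}(\Gamma \cap Q)$ whose image meets $B$; let $\xi$ be the associated arc, so $\xi \in \Lambda(Q)$ by definition.

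Next, I verify the distance hypothesis of Lemma \ref{l:large-subsarc}, namely $d(x, L_{\tau'}) > 100\epsilonSubS \beta_\Gamma(B) \diam(\tau')$. From \eqref{e:tauQ-rad-bound} one has $\diam(\tau) \leq 5r$, and by combining the lower bound $A 2^{l(i)} 2^{-kJ} \leq \diam(\tau)$ of Lemma \ref{l:prefiltration} with the upper bound $\diam(\tau') < A 2^{l(i)} 2^{-kJ+4}$ from Lemma \ref{l:build-filtration-arcs}(2), I obtain $\diam(\tau') < 16\diam(\tau) \leq 80 r$. Plugging in $d(x, L_{\tau'}) \geq \beta_\Gamma(B) r$ reduces the desired inequality to $1 > 8000\epsilonSubS$, which holds comfortably since $\epsilonSubS = 10^{-10}$. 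In particular this also shows $x \notin \tau'$, so $\xi \neq \tau$.

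Finally, Lemma \ref{l:diam-large-NEW} applied to $\tau$ with, say, $h$ slightly larger than $\beta(\tau')\diam(\tau') \leq 80\epsilonSubS r \ll r/10$ produces the arc $\tilde{\tau}$, and Lemma \ref{l:large-subsarc} applied to the $\xi$ just constructed produces $\check{\xi}$. Lemma \ref{l:large-sum-1}, whose hypotheses are now fully verified, then yields \eqref{e:1-large-sum-over-kids}. I do not anticipate any genuine obstacle: the only delicate point is choosing $\epsilonSubS$ small enough that the point witnessing $\beta_\Gamma(B)$ in $B$ is still far from $L_{\tau'}$ on the possibly larger scale $\diam(\tau') \leq 80 r$, and this is exactly the role of Note \ref{n:epsilon-s}.
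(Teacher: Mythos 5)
Your proof is correct and follows essentially the same approach as the paper: both produce the witnessing point $x$ from the definition of $\beta_\Gamma(B)$, extract the arc $\xi \in \Lambda(Q)$ containing it, and verify the separation hypothesis of Lemma \ref{l:large-subsarc} via the estimate $\diam(\tau') \lesssim r$ and the smallness of $\epsilonSubS$, before concluding through Lemma \ref{l:large-sum-1}. You are slightly more explicit about the diameter bound $\diam(\tau') < 80r$ and more conservative (the harmless factor $\tfrac12$) in selecting $x$, but the argument is the same.
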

\begin{proof}
Let $\tau'$ and $\xi'$  denote the respective extensions of $\tau$ and $\xi$ to arcs in $\Lambda'(B)$. 
First, $B\in \flatballs$ implies that  $\beta(\tau')<\epsilonSubS\beta_\Gamma(B)$.
Since $\epsilonSubS\leq 10^{-10}< \frac{\diam(B)}{\diam(\tau')}$
we have
that $\Gamma\cap B$ contains something other than the image of $\tau$, and more specifically, there is an arc $\xi\in \Lambda(Q)$ and a point $x\in B$ which is in the image of $ \xi$ such that 
\begin{align*}
\dist(x,L_{\tau'})\geq \beta_\Gamma(B)\diam(B)
\end{align*}
and since $100\epsilonSubS$ is smaller than the ratio $\diam(B)/\diam(\tau')$ we have, 
\begin{align*}
\dist(x,L_{\tau'})\geq \beta_\Gamma(B)\diam(B)
>
100\epsilonSubS\beta_\Gamma(B)\diam(\tau')
\end{align*}
Thus,  we may apply 
Lemma \ref{l:large-subsarc} and Lemma \ref{l:large-sum-1}, to get the proposition.
\end{proof}

\subsection{A geometric martingale}\label{s:martingale}
Fix an integer $M\geq 0$.
We will set $\cB^M$ to be balls for which we have control over $\beta_\Gamma(B)$ and that we can apply Proposition \ref{p:large-sum-over-kids} to, i.e
$$\cB^M:=\{2B\in \flatballs:  
	\beta_\Gamma(B)\in[2^{-M-1},2^{-M}]\}\,.$$
We also set $J_M$ to be the smallest integer larger than $M-\log(10\epsilonSubS)+10$, and apply Lemma \ref{l:number-of-families} to $\cB^M$ with $J=J_M$ and $\kappa=3$ (the constant $C$ for that lemma will be $2A=20$).
We thus have $\cB^M=\cB^M_1\cup...\cup\cB^M_{D_M}$, where 
$D_M=D(C=2A=20,\kappa=3) \cdot J_M$, which grows linearly in $M$.
Fix $\cB'=\cB^M_i$ for some $i\in \{1,...,D_M\}$ and
apply the construction following Lemma \ref{l:number-of-families}. We call the resulting dyadic-like cubes $\Delta=\Delta(\cB^M,i)$.
We will use the properties of  Lemma \ref{cube-properties-1} below.

Below we denote $\cH^1_\Gamma(F):=\cH^1(F\cap \Gamma)$.
The following proposition is as easy consequence of Proposition \ref{p:large-sum-over-kids} above.

\begin{proposition}\label{prop:large-sum-redone}
Let $2B\in 2\flatballs$ be given.  Suppose $Q=Q(B)\in \Delta$, is written as
\begin{align}\label{e:chop-up}
Q=(\cup_i Q^i)\cup R_Q\,, 
\end{align}
where $Q^i=Q(B^i)\in \Delta$ are maximal such that $Q^i\subsetneq Q$,
and $R_Q$ is chosen so that the union above is disjoint.
Then,
$$\sum_i \diam(Q^i) + \cH^1_\Gamma(R_Q)\geq    \diam(Q)\left(1+  \frac1{10}\epsilonSubS\beta_\Gamma(B)\right)$$
\end{proposition}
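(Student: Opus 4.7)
The plan is to apply Proposition \ref{p:large-sum-over-kids} to the ball $B$ and feed the decomposition $Q = (\cup_i Q^i) \cup R_Q$ into its covering conclusion. First, pick any $\tau \in \Lambda(Q)$ passing through $\cent(B)$: such a $\tau$ exists because $\cent(B) \in K \cap B \subset \Gamma \cap Q$, so the connected component of $\gamma^{-1}(\Gamma \cap Q)$ at a preimage of $\cent(B)$ supplies one. Proposition \ref{p:large-sum-over-kids} then produces an arc $\xi \in \Lambda(Q)$, the auxiliary arc $\tilde\tau$ from Lemma \ref{l:diam-large-NEW}, and the set $E \subset 2B$ such that every cover of $E$ by balls of diameter less than $10\epsilonSubS \beta_\Gamma(B) \diam(B)$ has total diameter at least $4r + \epsilonSubS \beta_\Gamma(B) \diam(B)$. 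An inspection of the proof of Lemma \ref{l:large-sum-1} shows the same conclusion in fact holds for covers by arbitrary sets, since the argument only uses that each covering set is small enough to meet at most one of the two connected pieces $\tilde\tau$ and $\check\xi$, together with the general fact that any cover of a connected set has total diameter at least the diameter of the set.

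Next I would construct such a cover of $E$ directly from the decomposition. Since $E \subset 2B \subset Q$, the subcubes $\{Q^i : Q^i \cap E \neq \emptyset\}$ together with any cover of $E \cap R_Q \subset \Gamma \cap R_Q$ cover $E$. For the latter piece, the definition of $\cH^1$ provides, for arbitrary $\epsilon, \delta > 0$, a cover $\{S_j\}$ of $E \cap R_Q$ by sets of diameter less than $\delta$ with $\sum_j \diam(S_j) \leq \cH^1_\Gamma(R_Q) + \epsilon$. The required diameter bound $< 10\epsilonSubS \beta_\Gamma(B) \diam(B)$ is imposed on the $S_j$'s via the choice of $\delta$, and for each $Q^i$ it follows from Lemma \ref{cube-properties-1}(1) and the scale-gap $\rad(B^i) \leq 2^{-J_M}\rad(B)$ forced by the maximality of $Q^i$ together with the fact that radii in $\cB^M_i$ scale in $2^{J_M\Z}$. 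The specific choice $J_M > M - \log(10\epsilonSubS) + 10$, paired with $\beta_\Gamma(B) \geq 2^{-M-1}$, is exactly what is needed to guarantee this bound for the $Q^i$'s.

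Invoking the covering inequality and sending $\epsilon \to 0$ yields $\sum_i \diam(Q^i) + \cH^1_\Gamma(R_Q) \geq 4r + \epsilonSubS \beta_\Gamma(B) \diam(B)$. The final step is to convert the right-hand side into $\diam(Q)(1 + \frac{1}{10}\epsilonSubS \beta_\Gamma(B))$. Using $\diam(2B) = 4r$, $\diam(B) = 2r$, and the enclosure $2B \subset Q \subset (1 + 2^{-J_M+2})2B$ from Lemma \ref{cube-properties-1}(1), one has $4r \geq \diam(Q)/(1 + 2^{-J_M+2})$ and $\diam(B) \geq \diam(Q)/(2(1 + 2^{-J_M+2}))$; a short arithmetic computation, which uses the definition of $J_M$ to bound $2^{-J_M+2}$ by a small multiple of $\epsilonSubS \beta_\Gamma(B)$, absorbs the resulting slack into the $\frac{1}{10}$-factor. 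The only real obstacle is this bookkeeping: the form of $J_M$ is chosen precisely so that (a) each subcube $Q^i$ is an admissible covering set, and (b) the replacement of $4r$ by $\diam(Q)$ consumes less than $\frac{9}{10}$ of the gain $\epsilonSubS \beta_\Gamma(B) \diam(Q)$.
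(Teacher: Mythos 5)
Your proposal is correct and takes essentially the same route as the paper: apply Proposition~\ref{p:large-sum-over-kids} via the decomposition $Q=(\cup Q^i)\cup R_Q$, then do arithmetic with the cube-to-ball enclosure from Lemma~\ref{cube-properties-1}(1) and the choice of $J_M$. The one genuine departure is in how the cover of $E$ is produced. The paper enlarges each $B^i$ to $2(1+\alpha)B^i\supset Q^i$ with $\alpha=40\epsilonSubS2^{-M-10}$ so that the covering sets remain literal balls, at the cost of carrying the $(1+\alpha)$ slack through the chain of inequalities; you instead observe that the proof of Lemma~\ref{l:large-sum-1} only uses two facts about the cover — each covering set is small enough to touch at most one of $\tilde\tau$, $\check\xi$, and a cover of a connected compact set has total diameter at least the diameter of the set — so the conclusion holds verbatim for covers by arbitrary sets, and then you can use the $Q^i$ directly. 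This is a mild simplification, and indeed it is needed in either version for the $E\cap R_Q$ piece, since the near-optimal Hausdorff cover of $E\cap R_Q$ is by sets, not balls; the paper tacitly uses this extension there, whereas you flag it explicitly. Your check that the admissibility $\diam(Q^i)<10\epsilonSubS\beta_\Gamma(B)\diam(B)$ is forced by the maximality of $Q^i$, the $2^{J_M\Z}$ scale gap in $\cB^M_i$, $\beta_\Gamma(B)\geq 2^{-M-1}$, and the definition of $J_M$ is exactly the bookkeeping the paper's choice of $\alpha$ encodes, so the two proofs are arithmetically equivalent.
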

\begin{proof}
Let $\alpha=40\epsilonSubS2^{-M-10}$.
Using Lemma \ref{cube-properties-1} we have  that 
$Q\subset 2(1+\alpha)B$ as well as $Q^i\subset 2(1+\alpha)B^i$.
Now, recalling that we also have $\beta_\Gamma(B)\in[2^{-M-1},2^{-M}]$, we have
\begin{align*}
\sum_i \diam(Q^i) + \cH^1_\Gamma(R_Q)
&\geq
\sum_{2(1+\alpha)B^i\cap E\neq \emptyset} \diam(2B^i)  + \cH^1_\Gamma(R_Q) \\
&\overset{\eqref{e:diam-scale}}{\geq}
\frac1{1+\alpha}
\sum_{2(1+\alpha)B^i\cap E\neq \emptyset} \diam(2(1+\alpha)B^i)  + \cH^1_\Gamma(R_Q) \\
&\overset{\eqref{e:1-large-sum-over-kids}} \geq
\frac1{1+\alpha}
(4r + \epsilonSubS\beta_\Gamma(B)\diam(B))\\
&\geq
4r(1-\alpha) + (1-\alpha)\epsilonSubS\beta_\Gamma(B)\diam(B)\\
&\geq 
4r + \frac12 \epsilonSubS\beta_\Gamma(B)\diam(B) \\
&\geq \diam(Q)\left(1 + \frac1{10} \epsilonSubS\beta_\Gamma(B)\right)
\end{align*}
\end{proof}

We can now show the main proposition for this section.

\begin{proposition}\label{p:martingale-prop}
$$\sum\limits_{Q\in\Delta} \diam(Q) \leq \frac{10}{\epsilonSubS} 2^M \cH^1(\Gamma)$$
\end{proposition}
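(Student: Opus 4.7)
The plan is to view Proposition \ref{prop:large-sum-redone} as a telescoping/martingale inequality and sum it over all cubes in $\Delta$. Rewriting the proposition's conclusion, and using $\beta_\Gamma(B) \geq 2^{-M-1}$ (which holds by the definition of $\cB^M$), we obtain for every $Q \in \Delta$ with maximal proper subcubes $\{Q^i\}$ and residual set $R_Q$:
\begin{align*}
  c\,\diam(Q) \leq \left(\sum_i \diam(Q^i) - \diam(Q)\right) + \cH^1_\Gamma(R_Q),
\end{align*}
where $c := \tfrac{1}{10}\epsilonSubS \cdot 2^{-M-1} = \tfrac{\epsilonSubS}{20 \cdot 2^M}$.

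Next I would sum this inequality over all $Q \in \Delta$. Two structural facts drive the proof. First, by Lemma \ref{cube-properties-1}(2), the cubes in $\Delta$ form a forest under inclusion, so every non-maximal cube appears as a ``child'' $Q^i$ of a unique parent. Consequently, the difference $\sum_Q\bigl(\sum_i \diam(Q^i) - \diam(Q)\bigr)$ telescopes and equals $-\sum_{Q \in \Delta_{\text{top}}} \diam(Q) \leq 0$, where $\Delta_{\text{top}}$ denotes the maximal cubes. Second, the sets $R_Q$ are pairwise disjoint subsets of $\H$: two cubes are either nested or disjoint, and in the nested case $Q' \subsetneq Q$ forces $Q' \subseteq Q^i$ for some child, while $R_Q \cap Q^i = \emptyset$ by the disjointness in \eqref{e:chop-up}. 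Therefore $\sum_Q \cH^1_\Gamma(R_Q) \leq \cH^1(\Gamma)$. Combining these bounds yields
\begin{align*}
  c \sum_{Q \in \Delta} \diam(Q) \leq \cH^1(\Gamma),
\end{align*}
and rearranging gives the required $\sum_{Q\in\Delta} \diam(Q) \leq \tfrac{20 \cdot 2^M}{\epsilonSubS}\cH^1(\Gamma)$, which is the claimed bound up to a harmless constant.

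The only subtlety I anticipate is handling the a priori possible infinitude of $\sum_Q \diam(Q)$ before the estimate is proved. To make the rearrangement rigorous, I would truncate $\Delta$ at depth $N$ (considering only cubes of radius $\geq 2^{-N}$), apply the summation above to the finite truncated forest so that the telescoping identity is justified term-by-term, obtain the bound $c S_N \leq \cH^1(\Gamma)$ uniformly in $N$, and then pass to the limit by monotone convergence. This is the main (and essentially the only) technical obstacle; the rest is bookkeeping using the tree structure provided by Lemma \ref{cube-properties-1} together with the amplification factor $(1+c)$ supplied by Proposition \ref{prop:large-sum-redone}.
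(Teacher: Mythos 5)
Your approach is genuinely different from the paper's. The paper constructs martingale weight functions $w_Q$ that redistribute mass $\diam(Q)$ from each cube to its children and residual set with a multiplicative loss $q = (1+c_0 2^{-M})^{-1}$ at each generation; the geometric decay of $w_Q(x)$ then gives the pointwise bound $\sum_Q w_Q(x) \leq \frac{10}{\epsilonSubS}2^M$ and the result follows by Fubini. That machinery sidesteps any convergence question. You instead telescope the amplification inequality directly, which is conceptually leaner and does not require defining the weights, but in exchange you must confront the possible divergence of $\sum_Q \diam(Q)$ head on.

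There is, however, a gap in the truncation step, which is precisely where the two approaches differ most. You claim that after restricting to cubes of radius $\geq 2^{-N}$, the telescoping yields $cS_N \leq \cH^1(\Gamma)$. That is not what the truncated sum gives. When you sum the inequality $c\,\diam(Q) \leq \bigl(\sum_i \diam(Q^i) - \diam(Q)\bigr) + \cH^1_\Gamma(R_Q)$ only over cubes $Q$ with $\rad(B(Q)) \geq 2^{-N}$, the children $Q^i$ with $\rad(B(Q^i)) < 2^{-N}$ appear with a $+$ sign but are never cancelled, since they are not themselves summed over. So the telescoping portion equals
\begin{align*}
\sum_{\text{boundary}} \diam(Q') \;-\; \sum_{\substack{Q \text{ top}\\\rad \geq 2^{-N}}} \diam(Q),
\end{align*}
where the ``boundary'' cubes are those whose parent has radius $\geq 2^{-N}$ but which themselves have radius $< 2^{-N}$. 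This is not $\leq 0$, so the bound $cS_N \leq \cH^1(\Gamma)$ does not follow ``term-by-term'' as you assert. The fix is to observe that the boundary cubes are pairwise disjoint: if one were strictly contained in another, the parent of the inner one (radius $\geq 2^{-N}$) would be a strict subcube of the outer one (radius $< 2^{-N}$), contradicting the monotonicity of radius under strict inclusion from Lemma \ref{cube-properties-1}. Combining disjointness with $\cH^1_\Gamma(Q') \geq \tfrac{1}{5}\diam(Q')$ gives $\sum_{\text{boundary}} \diam(Q') \leq 5\cH^1(\Gamma)$, hence $cS_N \leq 6\cH^1(\Gamma)$ uniformly in $N$, and the conclusion follows after letting $N \to \infty$, with a somewhat worse but still harmless constant. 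Without this boundary-term estimate, the argument is incomplete; with it, your route works and is arguably more elementary than the paper's weight construction.
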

\begin{proof}
In the same manner as \cite{Schul-TSP,Schul-AR} 
we define positive function $w_Q:\H\to \R$ such that
\begin{enumerate}[(i)]
\item
$\int_Q w_Qd\cH^1_\Gamma \geq \diam(Q)$
\item
For almost all $x\in \Gamma$, 
$$\sum\limits_{Q\in \Delta} w_Q(x)\leq \frac{10}{\epsilonSubS} 2^M$$
\item $w_Q$ is supported inside $Q$
\end{enumerate}
The functions $w_Q$ will be constructed as a martingale.
Denote $w_Q(Z)=\int_Z w_Qd\cH^1_\Gamma$.
Set
$$
w_Q(Q)=\diam(Q) .
$$
Assume now that $w_Q(Q')$ is defined.  We define $w_Q(Q'^i)$ and $w_Q(R_{Q'})$,
where
$$ 
Q'=(\cup  Q'^i) \cup R_{Q'},
$$
a decomposition as given by equation \eqref{e:chop-up}.

Take
$$
w_Q(R_{Q'})=\frac{w_Q( Q')}{s'} \cH^1_\Gamma(R_{Q'}) 
$$
(uniformly distributed)
and
$$
w_Q( Q'^i)=\frac{w_Q(Q')}{s'}\diam(Q'^i),
$$
where 
$$
s'=\cH^1_\Gamma(R_{Q'})+\sum_i \diam(Q'^i).
$$
This will give us $w_Q$.
Note that 
$s'\leq 2 \cH^1(\Gamma\cap Q')$.
Clearly (i) and (iii) are satisfied.
Furthermore, 
If $x\in R_{Q'}$, we have from (a rather weak use of) Proposition \ref{prop:large-sum-redone} that
\begin{equation}\label{e:r-Q-estimate}
w_Q(x)\leq \frac{w_Q(Q')}{s'}\leq \frac{w_Q(Q')}{\diam(Q')}\,. 
\end{equation}
To see (ii), note that for any $j$ we may write:
\begin{eqnarray*}
\frac{w_Q( Q'^{j})}{\diam(Q'^{j}) }
&=&
\frac{w_Q( Q')}{s'}\\
&=&
\frac{w_Q( Q')}{\diam(Q') }
\frac{\diam(Q' )}{s'}\\
&=&
\frac{w_Q( Q')}{\diam(Q') }
\frac{\diam(Q') }
	{\cH^1_\Gamma(R_{Q'}) + 
		\sum\limits_{i} \diam(Q'^i) }\\
&\leq&
\frac{w_Q( Q')}{\diam(Q') }
\frac{1}
	{1+ c_02^{-M}}\,,\\
\end{eqnarray*}
where $c_0=\frac1{10}\epsilonSubS$ is obtained from  Proposition \ref{prop:large-sum-redone}.

And so,
\begin{eqnarray*}
\frac{w_Q( Q'^{j})}{\diam(Q'^{j})} \leq 
 	q   \frac{w_Q( Q')}{\diam(Q')}
\end{eqnarray*}
with $q=\frac{1}
{1+ c_02^{-M}}$.  Now, suppose 
that  $x\in Q_N \subset ...\subset Q_1$.
we  get:
\begin{eqnarray*}
\frac{w_{Q_1}(Q_N)}{\diam(Q_N)} &\leq& 
  q\frac{w_{Q_1}(Q_{N-1})}{\diam(Q_{N-1})} \\
  &\leq&...\\
  &\leq&
  q^{N-1}\frac{w_{Q_1}(Q_{1})}{\diam(Q_1)}=q^{N-1}.
\end{eqnarray*}
We have using \eqref{e:r-Q-estimate} that for 
$x\in R_{Q_{N}}$
\begin{equation}
w_{Q_1}(x) \leq   \frac{w_{Q_1}(Q_N)}{\diam(Q_N)} \leq  q^{N-1}. 
\end{equation}
Let $E$ denote the collection of all  elements $x$ which are in an infinite sequence of $\Delta$ i.e. can be written as elements $x\in .... \subset Q_N \subset ...\subset Q_1$,  for any positive integer $N$.
Then,
as $\cH^1_\Gamma(Q)\geq r(B(Q))\geq \frac15\diam(Q)$,
we have that for any $N$
\begin{equation}
w_{Q_1}(Q_N) \leq   \diam(Q_N) q^{N-1} \leq  5q^N\cH^1_\Gamma(Q_N)
\end{equation}
which yields that for $\cH^1_\Gamma$-almost-every $x\in E$ we have that $w_{Q_1}(x)=0$.

This will give us (ii) as a sum of a geometric series  since
$$\sum q^n = \frac1{1-q}
\leq  \frac1{c_0 2^{-M}}=\frac{10}{\epsilonSubS}2^{M}.$$

Now,
\begin{eqnarray*}
\sum\limits_{Q \in \Delta}\diam(Q)
&=&
\sum\limits_{Q \in \Delta}\int w_{Q}(x)d\cH^1_\Gamma(x)\\
&=&
\int \sum\limits_{Q \in \Delta} w_{Q}(x)d\cH^1_\Gamma(x)\\
&\leq&
\frac{10}{\epsilonSubS}
\int  2^{M} d\cH^1_\Gamma(x)
=
\frac{10}{\epsilonSubS}
2^{M}\cH^1(\Gamma).
\end{eqnarray*}

\end{proof}

\begin{proof}[Proof of inequality \eqref{e:G2-control}]

We will  show the stronger inequality
\begin{align*}
\sum\limits_{B\in \flatballs} \beta_\Gamma(B)^2\diam(B)
\leq C\cH^1(\Gamma)\,.
\end{align*}
Recall the discussion at the start of Section \ref{s:martingale}.  There, for an integer $M\geq 0$, we get (using Lemma  \ref{l:number-of-families}) for  
$i\in\{1,...,D_M\}$ a subset $\cB_i^M\subset 2\flatballs$.  We apply the construction  which follows  Lemma  \ref{l:number-of-families} to $\cB_i^M$, and get $\Delta(\cB^M,i)$. 
Then
\begin{eqnarray*}
\sum\limits_{B\in \flatballs} \beta_\Gamma(B)^2\diam(B)
&\leq& 
\sum\limits_{M\geq 0} 
	\sum\limits_{\ 2B\in \cB^M}
		 (2^{-M})^2\diam(B)\\
&\leq& 
\sum\limits_{M\geq 0} 
	\sum\limits_{i=1}^{D_M}
	\sum\limits_{2B\in \cB_i^M }
		 2^{-2M}\diam(B)\\
&\leq& 
\sum\limits_{M\geq 0} 2^{-2M} 
	\sum\limits_{i=1}^{D_M}
	\sum\limits_{Q\in \Delta(\cB^M,i)}
		 \diam(Q)\\
&\leq& 
\sum\limits_{M\geq 0} 2^{-2M} 
	\sum\limits_{i=1}^{D_M}
	\frac{10}{\epsilonSubS}2^M\cH^1(\Gamma)	
\end{eqnarray*}
where for the last inequality, we used  Proposition \ref{p:martingale-prop}.
Thus, we reduce to the calculation 
\begin{align*}
\sum_{M\geq 0} D_M  2^{-2M} 2^M 
\leq 
\sum_{M\geq 0} D J_M 2^{-M} 
\leq
\sum_{M\geq 0} D \cdot (1+M-\log(10\epsilonSubS)+10) 2^{-M} 
<\infty
\end{align*}
where the last finite bound  is independent of $\Gamma$.
\begin{note}
Recall that as per the start of Section \ref{s:martingale}, 
$D$ is a constant that depends on $\kappa=3$ as well as the constant $A=10$ (fixed in Note \ref{n:fix-a}). 
The constant $\epsilonSubS$ is fixed in Note \ref{n:epsilon-s} to be $10^{-10}$.
\end{note}

\end{proof}

\section*{PART B}
\section{Curvature estimates for the Heisenberg group}\label{s:curvature-in-Heisenberg}

The purpose of this section is to prove Proposition \ref{p:prop-4}.  It is independent from the rest of the paper.  The only properties of the Heisenberg group we will use is the exact formula for the Koranyi metric, the invariance of the Koranyi metric under group multiplication, rotation about the $z$-axis, and that the Koranyi metric scales under the dilation automorphisms.  All of these properties hold no matter what $\eta$ is.
We will need the following simple numerical inequality.

\begin{lemma} \label{concave-power}
  Let $p \geq 1$ and $a,b > 0$.  If $b \geq 2^p a$ then
  \begin{align*}
    (a + b)^{1/p} \geq a^{1/p} + \frac{1}{2} b^{1/p}.
  \end{align*}
\end{lemma}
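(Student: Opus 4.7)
The plan is to rewrite the condition $b \geq 2^p a$ in the form $a^{1/p} \leq \tfrac{1}{2} b^{1/p}$, and then use the obvious inequality $(a+b)^{1/p} \geq b^{1/p}$. No serious obstacle is expected; the inequality is essentially a one-line observation.

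More precisely, first I would raise both sides of $b \geq 2^p a$ to the power $1/p$ (which preserves the inequality since $1/p > 0$) to get
\begin{equation*}
b^{1/p} \geq 2 a^{1/p}, \qquad \text{i.e.,} \qquad a^{1/p} \leq \tfrac{1}{2} b^{1/p}.
\end{equation*}
Next, since $a > 0$ I have $a+b \geq b$ and monotonicity of $x \mapsto x^{1/p}$ gives
\begin{equation*}
(a+b)^{1/p} \geq b^{1/p}.
\end{equation*}

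Combining these two inequalities,
\begin{equation*}
a^{1/p} + \tfrac{1}{2} b^{1/p} \leq \tfrac{1}{2} b^{1/p} + \tfrac{1}{2} b^{1/p} = b^{1/p} \leq (a+b)^{1/p},
\end{equation*}
which is exactly the claim. The hypothesis $p \geq 1$ is only used implicitly in the statement (to ensure $1/p \in (0,1]$, so that all roots are defined for positive arguments); the proof itself does not require any concavity argument, only monotonicity of the $p$-th root.
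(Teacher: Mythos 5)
Your proof is correct and is essentially the same argument the paper gives: both use $(a+b)^{1/p} \geq b^{1/p}$ together with $a^{1/p} \leq \tfrac{1}{2}b^{1/p}$ (which follows by taking $p$-th roots of the hypothesis $b \geq 2^p a$). You have simply spelled out the steps the paper compresses into one display.
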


\begin{proof}
  \begin{align*}
    (a+b)^{1/p} \geq \frac{b^{1/p}}{2} + \frac{b^{1/p}}{2} \geq a^{1/p} + \frac{b^{1/p}}{2}.
  \end{align*}
\end{proof}

We also will need a lemma that allows us to reduce finding a lower bound of the triangle inequality to finding the lower bound of a power of the triangle inequality.

\begin{lemma} \label{power-curvature}
  Let $a,b,c \in \H$ so that
  \begin{align*}
    \max\{d(a,b),d(b,c)\} \leq \alpha d(a,c),
  \end{align*}
  for some $\alpha \geq 1/2$.  Then
  \begin{align*}
    d(a,b) + d(b,c) - d(a,c) \geq \frac{1}{100\alpha^3 d(a,c)^3} \left[(d(a,b) + d(b,c))^4 - d(a,c)^4\right].
  \end{align*}
\end{lemma}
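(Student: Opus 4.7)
The plan is to reduce the claimed inequality to a straightforward algebraic identity and then bound a few quantities using the hypothesis $\max\{d(a,b),d(b,c)\} \leq \alpha d(a,c)$.

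First I would apply the factorization $x^4 - y^4 = (x-y)(x+y)(x^2+y^2)$ with $x = d(a,b)+d(b,c)$ and $y = d(a,c)$. By the ordinary triangle inequality, $x \geq y \geq 0$, so every factor is nonnegative and
\begin{equation*}
(d(a,b)+d(b,c))^4 - d(a,c)^4 = \bigl(d(a,b)+d(b,c)-d(a,c)\bigr)\bigl(x+y\bigr)\bigl(x^2+y^2\bigr).
\end{equation*}
Thus the asserted inequality is equivalent to showing $(x+y)(x^2+y^2) \leq 100\alpha^3 d(a,c)^3$.

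Next I would use the hypothesis. Since $d(a,b), d(b,c) \leq \alpha d(a,c)$, we get $x \leq 2\alpha d(a,c)$, so $x+y \leq (2\alpha+1)d(a,c)$ and $x^2+y^2 \leq (4\alpha^2+1)d(a,c)^2$. Using $\alpha \geq 1/2$ to absorb the additive $1$'s (so $2\alpha+1 \leq 4\alpha$ and $4\alpha^2+1 \leq 8\alpha^2$), I obtain
\begin{equation*}
(x+y)(x^2+y^2) \leq 4\alpha \cdot 8\alpha^2 \cdot d(a,c)^3 = 32\alpha^3 d(a,c)^3 \leq 100\alpha^3 d(a,c)^3,
\end{equation*}
which completes the proof.

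There is no real obstacle here: the lemma is purely an algebraic bookkeeping device that converts a lower bound on $x^4-y^4$ into a lower bound on $x-y$, with a loss that is controlled whenever $x$ and $y$ are comparable (which is precisely what the hypothesis $x \leq 2\alpha d(a,c)$ guarantees). The role of the lemma in the surrounding argument is clearly to let one derive the triangle-excess bound in Proposition~\ref{p:prop-4} from a $4$th-power version that will presumably be proved by a direct Heisenberg computation in Koranyi coordinates.
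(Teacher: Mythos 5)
Your proof is correct and follows essentially the same route as the paper: the paper sets $t=(d(a,b)+d(b,c))/d(a,c)$ and writes $t-1=\frac{t^4-1}{(t+1)(t^2+1)}$, which after clearing the normalization by $d(a,c)$ is exactly your factorization $x^4-y^4=(x-y)(x+y)(x^2+y^2)$, and both proofs then bound $(t+1)(t^2+1)\leq 100\alpha^3$ (equivalently your $(x+y)(x^2+y^2)\leq 100\alpha^3 d(a,c)^3$) using $t\leq 2\alpha$ and $\alpha\geq 1/2$.
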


\begin{proof}

Let $t=\frac{d(a,b) + d(b,c)}{d(a,c)}$ and $M=\frac{1}{ d(a,c)^4}\left((d(a,b) + d(b,c))^4 - d(a,c)^4\right)$.  The lemma will follow if we show that
if $t\leq 2\alpha$ and
$$t^4-1\geq M$$
then
$$t-1\geq \frac{M}{100\alpha^3}.$$
Indeed, 
$$t-1=\frac{t^4-1}{(t+1)(t^2+1)}\geq \frac{M}{(1+2\alpha)(1+4\alpha^2)} \geq \frac{M}{100\alpha^3}\,.$$
In the last inequality, we used the fact that $\alpha \geq 1/2$.
\end{proof}

We can now prove  Proposition \ref{p:prop-4}.

\begin{proof}[Proof of Proposition \ref{p:prop-4}]
  For convenience, we set $D = \diam(\{p_1,p_2,p_3,p_4\})$.  The proof will consist of many case analyses of the four points depending on their configuration.  We will use decimals to demarcate subcases, so case 2.1.2 is a subcase of 2.1 is a subcase of case 2.

  Before we start the case analyses, we first prove the general fact that
  \begin{align}
    \frac{1}{6^4 \cdot (3D)^3} \max_{i \in \{1,2,3\}} \sup_{a \in \overline{p_ip_{i+1}}} d(a, \overline{p_1p_4})^4 \leq d(p_1,p_2) + d(p_2,p_3) + d(p_3,p_4). \label{segments-dist-upper-bound}
  \end{align}
  Indeed, as $d(p_1,p_2) + d(p_2,p_3) + d(p_3,p_4) \leq 3D$, it further reduces to showing when $i \in \{1,2,3\}$ that
  \begin{align*}
    \sup_{a \in \overline{p_ip_{i+1}}}d(a,\overline{p_1p_4}) \leq 6\left(d(p_1,p_2) + d(p_2,p_3) + d(p_3,p_4)\right).
  \end{align*}
  This is straightforward as for all $t \in [0,1]$ we have
  \begin{multline*}
    d(p_i \delta_t \tilde{\pi}(p_i^{-1}p_{i+1}), \overline{p_1p_4}) \leq d(p_i \delta_t \tilde{\pi}(p_i^{-1}p_{i+1}), p_1) \leq d(p_i \delta_t \tilde{\pi}(p_i^{-1}p_{i+1}), p_i) + d(p_i,p_1) \\
    \leq d(p_i,p_{i+1}) + d(p_i,p_1).
  \end{multline*}
  Here, we've used the fact that $N(\tilde{\pi}(g)) \leq N(g)$ for all $g \in \H$.  We now proceed case by case.

  {\bf Case 1:} $d(p_1,p_2) + d(p_2,p_3) + d(p_3,p_4) > \frac{3}{2} d(p_1,p_4)$.

  We then have that
  \begin{align}
    d(p_1,p_2) + d(p_2,p_3) + d(p_3,p_4) - d(p_1,p_4) \geq \frac{1}{3} (d(p_1,p_2) + d(p_2,p_3) + d(p_3,p_4)). \label{large-broken-line}
  \end{align}
  Equations \eqref{large-broken-line} and \eqref{segments-dist-upper-bound} give \eqref{curvature-ineq} as $\epsilon < 1$ and $\eta < 1$, which finishes the proof of this case.

  {\bf Case 2:} We can now suppose
  \begin{align}
    d(p_1,p_2) + d(p_2,p_3) + d(p_3,p_4) \leq \frac{3}{2} d(p_1,p_4). \label{bounded-excess}
  \end{align}
  {
  Note that the inequality we are trying to prove is invariant with respect to isometries and scales with dilation.  Indeed, the terms in \eqref{curvature-ineq} are all stated in terms of relative distance and both sides are 1-homogeneous with respect to dilation.  One just has to verify that the horizontal line segment interpolants $\overline{p_ip_{i+1}}$ behave well under these operations.  Verifying that they behave well under translation and rotation is trivial (that is, $g\overline{p_ip_{i+1}} = \overline{(gp_i)(gp_{i+1})}$ and $R_\theta \overline{p_ip_{i+1}} = \overline{R_\theta(p_i) R_\theta(p_{i+1})}$).  It is also easy to prove that they scale properly with dilation.  Indeed, for $s \in [0,1]$ and $\lambda > 0$, we have
  \begin{align*}
    \delta_\lambda (g \delta_s \tilde{\pi}(g^{-1}h)) = \delta_\lambda(g) \delta_\lambda \delta_s\tilde{\pi}(g^{-1}h) = \delta_\lambda(g) \delta_s \delta_\lambda \tilde{\pi}(g^{-1}h) = \delta_\lambda(g) \delta_s \tilde{\pi}(\delta_\lambda(g)^{-1} \delta_\lambda(h)).
  \end{align*}
  
  Thus, having proven that \eqref{curvature-ineq} is invariant under isometries and scales with dilation, we are free to normalize $p_1,p_2,p_3,p_4$ using these operations.  We will suppose that $p_1 = (0,0,0)$ by translation.  We may suppose that that $p_1$ and $p_4$ do not project to the same point under $\pi$ as we could have perturbed the points initially by an infinitesimally small amount to put them in general position without affecting the bound by too much.  Thus, we may suppose that $p_4 = (1,0,t)$ by rotation and dilation.  We cannot apply any more operations without changing either $p_1$ or $p_4$ so we will have to write $p_2 = (x,y,z)$, $p_3 = (u,v,w)$.  Note that under this normalization, we have $d(p_1,p_4) = (1+ \eta t^2)^{1/4}$.}

  {\bf Case 2.1:} $d(p_1,p_4) > 100/\epsilon^2$.

  We first state the intuition for this subcase.  Because we have fixed the projection of $p_4$ to $\R^2$ as $(1,0)$, saying that $d(p_1,p_4)$ is large is saying $p_1$ and $p_4$ are very vertical with respect to each other.  Note that the Koranyi metric behaves like the square root metric for such points.  We will seek to obtain the needed excess from the fact that the triangle inequality is very generous for the square root metric when points are spread out.  The case when two points are very close together requires a separate analysis.

  Let $R = d(p_1,p_4)$.  We then have that
  \begin{align}
    \frac{1}{18^4D^3} \max_{i \in \{1,2,3\}} \sup_{a \in \overline{p_ip_{i+1}}} d(a,\overline{p_1p_4})^4 \overset{\eqref{segments-dist-upper-bound} \wedge \eqref{bounded-excess}}{\leq} R. \label{segments-dist-upper-bound-2}
  \end{align}
  
  We have that
  \begin{align}
    |t| = \left( \frac{R^4-1}{\eta} \right)^{1/2} = \left( \frac{R^4 - 1}{R^4} \right)^{1/2} \frac{R^2}{\eta^{1/2}} \geq \left(1 - \frac{\epsilon^2}{1000}\right) \frac{R^2}{\eta^{1/2}}. \label{suppose-large-t}
  \end{align}
  Here, we've used the hypothesis of case 2.1 (in a very non-sharp manner).

  {\bf Case 2.1.1:} $\min \left\{ |z|, \left| t + \frac{y}{2} - z \right| \right\} > \frac{\epsilon^2}{16} \frac{R^2}{\eta^{1/2}}$.  This is the case when $p_2$ is vertically far from both $p_1$ and $p_4$.

  Then
  \begin{align*}
    d(p_1,p_2) + d(p_2,p_3) + d(p_3,p_4) &\geq d(p_1,p_2) + d(p_2,p_4) \\
    &\geq \eta^{1/4} |z|^{1/2} + \eta^{1/4} \left| t + \frac{y}{2} - z \right|^{1/2} \\
    &\geq \eta^{1/4} \left( \left|t + \frac{y}{2} \right| + 2 |z|^{1/2} \left| t + \frac{y}{2} - z \right|^{1/2} \right)^{1/2} \\
    &\geq \eta^{1/4} \left( |t| - \frac{|y|}{2} + \frac{\epsilon^2}{8} \frac{R^2}{\eta^{1/2}} \right)^{1/2} = (*).
  \end{align*}
  Here, we've used the triangle inequality along with the hypothesis of case 2.1.1.  As $((x^2+y^2)^2 + \eta z^2)^{1/4} < 3R/2$ by \eqref{bounded-excess}, we must have that $|y| < 3R/2$.  We then get
  \begin{align*}
    (*) \overset{\eqref{suppose-large-t}}{\geq} \eta^{1/4} \left[ \left(1 - \frac{\epsilon^2}{1000} \right) \frac{R^2}{\eta^{1/2}} - \frac{3\eta^{1/2}}{4R} \frac{R^2}{\eta^{1/2}} + \frac{\epsilon^2}{8} \frac{R^2}{\eta^{1/2}} \right]^{1/2} \geq \left(1 + \frac{\epsilon^2}{16} \right) R.
  \end{align*}
  Here, we used the fact that $\eta < 1$ and $R > 100/\epsilon^2$.  This proves the proposition as the right hand side of \eqref{curvature-ineq} is bounded by a multiple of $R$, as we proved in \eqref{segments-dist-upper-bound-2}.

  {\bf Case 2.1.2:} $\min \left\{ |z|, \left| t + \frac{y}{2} - z \right| \right\} < \frac{\epsilon^2}{16} \frac{R^2}{\eta^{1/2}}$.  This is now the case when $p_2$ is vertically close to one of $p_1$ and $p_4$.

  We first suppose that $|z| < \frac{\epsilon^2}{16} \frac{R^2}{\eta^{1/2}}$, that is $p_2$ is vertically close to $p_1$ and so the horizontal component of $p^{-1}p_2$ must be dominant.  Indeed, as $((x^2 + y^2)^2 + \eta z^2)^{1/4} > \epsilon R$ by \eqref{eps-lower-bound}, we must have that
  \begin{align}
    (x^2 + y^2)^{1/2} \geq \frac{\epsilon}{2} R, \label{large-horizontal-1}
  \end{align}
  and so $(x^2 + y^2)^2 \geq 2^4 \eta z^2$ by our upper bound on $|z|$.  By an application of Lemma \ref{concave-power}, we have
  \begin{align*}
    d(p_1,p_2) + &d(p_2,p_3) + d(p_3,p_4) \\
    &\geq d(p_1,p_2) + d(p_2,p_4) \\
    &= ((x^2 + y^2)^2 + \eta z^2)^{1/4} + \left(((1-x)^2 + y^2)^2 + \eta \left(t + \frac{y}{2} - z\right)^2 \right)^{1/4} \\
    &\geq \frac{1}{2} (x^2 + y^2)^{1/2} + \eta^{1/4} \left( |z|^{1/2} + \left| t + \frac{y}{2} - z \right|^{1/2} \right) = (*).
  \end{align*}
  Remembering that $|y| \leq 3R/2$, we can continue
  \begin{align*}
  (*) \overset{\eqref{large-horizontal-1}}{\geq} \frac{\epsilon}{4} R + \eta^{1/4} \left| t + \frac{y}{2} \right|^{1/2} \overset{\eqref{suppose-large-t}}{\geq} \frac{\epsilon}{4}R + \left( 1 - \frac{\epsilon^2}{1000} - \frac{3\eta^{1/2}}{4R} \right)^{1/2} R \geq \left( 1 + \frac{\epsilon}{8} \right) R.
  \end{align*}
  In the last inequality, we needed to use the fact that $R > 100/\epsilon^2$ and $\eta < 1$.  As before, this proves the proposition as the right hand side of \eqref{curvature-ineq} is bounded by a multiple of $R$, as we proved in \eqref{segments-dist-upper-bound-2}.

  The case when $\left|t + \frac{y}{2} - z \right| < \frac{\epsilon^2}{16} \frac{R^2}{\eta^{1/2}}$ is treated in a similar manner.  This would represent the case when $p_2$ is vertically close to $p_4$.  This finishes the analysis of case 2.1.

  {\bf Case 2.2:} $d(p_1,p_4) \leq 100/\epsilon^2$.

  In particular, we have that
  \begin{align}
    \eta t^2 \leq d(p_1,p_4)^4 \leq \left(\frac{10}{\epsilon}\right)^8. \label{eta-t-bound}
  \end{align}
  Note that as $d(p_1,p_2) \leq \frac{3}{2} d(p_1,p_4)$ and $d(p_3,p_1) \leq d(p_3,p_2) + d(p_2,p_1) \leq \frac{3}{2} d(p_1,p_4)$ by \eqref{bounded-excess}, we get the following bounds:
  \begin{align}
    |x| &\leq \frac{150}{\epsilon^2}, \label{small-x-1} \\
    |u| &\leq \frac{150}{\epsilon^2}. \label{small-u-1}
  \end{align}

  Recall our normalization that $p_1 = (0,0,0)$, $p_4 = (1,0,t)$, $p_2 = (x,y,z)$, and $p_3 = (u,v,w)$.  For $(a,b,c) \in \H$, let $(a,b,c)_x = (a,0,0)$ denote the projection onto the $x$-axis.  The triangle inequality then gives that
  \begin{align*}
   & \sup_{s \in [0,1]} d(p_2 \delta_s \tilde{\pi}(p_2^{-1}p_3), \overline{p_1p_4})^4 \\
   &\leq \left(\sup_{s \in [0,1]} d(p_2 \delta_s \tilde{\pi}(p_2^{-1}p_3), (p_2 \delta_s \tilde{\pi}(p_2^{-1}p_3))_x) + d((p_2 \delta_s \tilde{\pi}(p_2^{-1}p_3))_x,\overline{p_1p_4})\right)^4 \\
   &\leq 8\left(\sup_{s \in [0,1]} d(p_2 \delta_s \tilde{\pi}(p_2^{-1}p_3), (p_2 \delta_s \tilde{\pi}(p_2^{-1}p_3))_x)^4 + d((p_2 \delta_s \tilde{\pi}(p_2^{-1}p_3))_x,\overline{p_1p_4})^4\right) \\
    &\leq \sup_{s \in [0,1]} \left[ 8\left( y + (v-y) s \right)^4 + 8 \eta \left( z - \frac{xy}{2} - (uy + 2xy)s - \frac{1}{2} (uv + xy - uy - xv) s^2 \right)^2 \right] \\
    &\qquad + 8\max\{(x-1)_+,(-x)_+,(u-1)_+,(-u)_+ \}^4.
  \end{align*}
  Here, we have the function $r_+ = \max\{r,0\}$.  Using the fact that $(u+v)^p \leq 2^{p-1} (|u|^p + |v|^p)$, $\eta < 1$, and inequalities \eqref{small-x-1} and \eqref{small-u-1}, we get (by an overestimation) that
  \begin{align}
    \sup_{a \in \overline{p_2p_3}} d(a,\overline{p_1p_4})^4 \leq \frac{10^{10}}{\epsilon^4} \max\{y^4,v^4,z^2,y^2,v^2, ((x-1)_+)^4,((-x)_+)^4,((u-1)_+)^4,((-u)_+)^4\}. \label{listed-bound}
  \end{align}
  In the same way, we also have that
  \begin{align}
    \sup_{a \in \overline{p_1p_2}} d(a,\overline{p_1p_4})^4 &\leq \frac{10^{10}}{\epsilon^4} \max\{ y^4, y^2, ((-x)_+)^4, ((x-1)_+)^4 \}, \label{listed-bound-1} \\
    \sup_{a \in \overline{p_3p_4}} d(a,\overline{p_1p_4})^4 &\leq \frac{10^{10}}{\epsilon^4} \max\{ v^4, v^2, w^2, ((-u)_+)^4, ((u-1)_+)^4 \}. \label{listed-bound-2}
  \end{align}
  
  We now claim that, to prove the proposition under the current case hypotheses, we can reduce to proving that for any $\eta < (\epsilon/10)^{10}$, we get that
  \begin{align}
    (d(p_1,p_2) + d(p_2,p_4))^4 - d(p_1,p_4)^4 &\geq \frac{1}{4} \eta^2 \max\{y^4+y^2+((x-1)_+)^4+((-x)_+)^4,z^2\}, \label{q1-curvature-ineq} \\
    (d(p_1,p_3) + d(p_3,p_4))^4 - d(p_1,p_4)^4 &\geq \frac{1}{4} \eta^2 \max\{v^4+v^2+((u-1)_+)^4+((-u)_+)^4,w^2\}. \label{q2-curvature-ineq}
  \end{align}
  Indeed, by \eqref{bounded-excess} and the triangle inequality, we get that
  \begin{align*}
    \max\{d(p_1,p_2),d(p_2,p_4)\} \leq d(p_1,p_2) + d(p_2,p_4) \leq \frac{3}{2} d(p_1,p_4), \\
    \max\{d(p_1,p_3),d(p_3,p_4)\} \leq d(p_1,p_3) + d(p_3,p_4) \leq \frac{3}{2} d(p_1,p_4).
  \end{align*}
  As $d(p_1,p_4) \leq D$, by an application of Lemma \ref{power-curvature} with $\alpha = \frac{3}{2}$, we get that proving \eqref{q1-curvature-ineq} and \eqref{q2-curvature-ineq} would give (after overestimation)
  \begin{align}
    d(p_1,p_2) + d(p_2,p_4) - d(p_1,p_4) &\geq \frac{\eta^2}{10000D^3}\max\{y^4,y^2,((x-1)_+)^4,((-x)_+)^4,z^2\}, \label{q1-curvature-ineq'} \\
    d(p_1,p_3) + d(p_3,p_4) - d(p_1,p_4) &\geq \frac{\eta^2}{10000D^3}\max\{v^4,v^2,((u-1)_+)^4,((-u)_+)^4,w^2\}. \label{q2-curvature-ineq'}
  \end{align}
  Here, we've also used the fact that $\max\{a_1,...,a_n\} \leq a_1 + ... + a_n$ for nonnegative $a_i$.  A simple application of the triangle inequality gives
  \begin{multline*}
    d(p_1,p_2) + d(p_2,p_3) + d(p_3,p_4) - d(p_1,p_4) \geq \max_{i \in \{2,3\}} (d(p_1,p_i) + d(p_i,p_4) - d(p_1,p_4)) \\
    \overset{\eqref{q1-curvature-ineq'} \wedge \eqref{q2-curvature-ineq'}}{\geq} \frac{\eta^2}{10000D^3} \max\{y^4,v^4,z^2,y^2,v^2,z^2,w^2, ((x-1)_+)^4,((-x)_+)^4,((u-1)_+)^4,((-u)_+)^4\}.
  \end{multline*}
  Appealing to \eqref{listed-bound}, \eqref{listed-bound-1}, and \eqref{listed-bound-2} now proves the proposition.

  {Note that the inequalities \eqref{q1-curvature-ineq} and \eqref{q2-curvature-ineq} should not be viewed as ``general inequalities'' as the terms on the right hand side are reflecting our normalization of $p_1,p_2,p_3,p_4$.  }
  
  Thus, it suffices to prove \eqref{q1-curvature-ineq} and \eqref{q2-curvature-ineq}.  
  We will only prove \eqref{q1-curvature-ineq}, which comes in two steps: one lower bounding the left hand side by $\frac{1}{2} \eta^2(y^4+y^2+((x-1)_+)^4+((-x)_+)^4)$ and one lower bounding by $\frac{1}{4}\eta^2 z^2$.  The proof of \eqref{q2-curvature-ineq} follows the exact same structure with only $p_3$ replacing $p_2$.

  Before we start the proof, let us describe the intuition behind the proof.  As before, there will be many case analyses (although some cases will resemble others).  Our first case to rule out is when the $y$-component of $p_2$ is large (Cases 2.2.1A and 2.2.2B.1).  As $p_4 = (1,0,t)$, this would mean that the three point configuration, $\{p_1,p_2,p_4\}$ is highly unaffine when projected onto $\R^2$.  Then, assuming $|y|$ is large enough, the normal Euclidean curvature inequality would give the needed lower bounds.  Thus, we may assume that $p_2$ lies close to the $xz$-plane.  We now use the reasoning behind case 2.1.  If $p_2$ is vertically far from $p_1$ and $p_4$, then we hope to gain our lower bound from the excess of the triangle inequality in the square root metric.  These two cases are in Cases 2.2.2A.1 and 2.2.2B.2.1 and will be handled in a similar manner that Case 2.1.1 was handled.  Otherwise, $p_2$ is vertically close to one of the points $p_1$ or $p_4$, say $p_1$, and so \eqref{eps-lower-bound} says that the horizontal component of $p_1^{-1}p_2$ must be large.  We then use Lemma \ref{concave-power} to derive our lower bound.
  
  We remind the reader of the reverse Minkowski inequality, which we will use many times to group the inequalities by components:
  \begin{align*}
    \left( \sum_i a_i^{1/q} \right)^q + \left( \sum_i b_i^{1/q} \right)^q \leq \left(\sum_i (a_i + b_i)^{1/q}\right)^q.
  \end{align*}
  This inequality holds whenever $a_i$ and $b_i$ are nonnegative numbers and $q \geq 1$.

  We will use A and B to denote the subcases is the two lower bounds that we need.  Note that A and B are not meant to be seen as mutually exclusive.  So 2.2.1A is disjoint from 2.2.2A, but has no relation to 2.2.1B.
  
  
{\bf A}:  $\mathbf{\frac{1}{4} \eta^2(y^4 + y^2 + ((x-1)_+)^4 + ((-x)_+)^4)}$ {\bf lower bound}.
   By expanding the $(x^2+y^2)^2$ and $((1-x)^2+y^2)^2$ terms and using the reverse Minkowski's inequality, we have
  \begin{align}
    &(d(p_1,p_2)+d(p_2,p_4))^4 \\
    &\qquad = \left[\left((x^2+y^2)^2 + \eta z^2\right)^{1/4} + \left(((1-x)^2+y^2)^2 + \eta\left( t + \frac{y}{2} - z \right)^2\right)^{1/4} \right]^4 \notag \\
    &\qquad = \left[\left(x^4 + x^2y^2 +y^4 + (x^2y^2 + \eta z^2)\right)^{1/4} \right. \notag \\
    &\qquad\qquad + \left.\left((1-x)^4+(1-x)^2y^2 + y^4 + \left((1-x)^2y^2+ \eta\left( t + \frac{y}{2} - z \right)^2 \right) \right)^{1/4} \right]^4 \notag \\
    &\qquad\geq \left(|x| + |1-x|\right)^4 + \left(|xy|^{1/2} + |(1-x)y|^{1/2}\right)^4 + 16 y^4 \notag \\
    &\qquad\qquad  + \left[(x^2y^2 + \eta z^2)^{1/4} + \left((1-x)^2y^2 + \eta \left( t + \frac{y}{2} - z \right)^2 \right)^{1/4} \right]^4. \label{reverse-minkowski-2}
  \end{align}
  We can easily calculate
  \begin{align}
    (|x| + |1-x|)^4 = (1 + 2(-x)_+ + 2(x-1)_+)^4 \geq 1 + 16((-x)_+)^4 + 16((x-1)_+)^4. \label{x-excess}
  \end{align}
  Note that
  \begin{align}
    \left( |xy|^{1/2} + |(1-x)y|^{1/2} \right)^4 \geq x^2y^2 + (1-x)^2y^2 \geq  \frac{1}{2} y^2. \label{always-y}
  \end{align}
  Indeed, this follows from the fact that $x^2 + (1-x)^2 \geq \frac{1}{2}$ always.  We therefore get
  \begin{multline*}
    (d(p_q,p_2)+d(p_2,p_4))^4 \overset{\eqref{reverse-minkowski-2} \wedge \eqref{x-excess} \wedge \eqref{always-y}}{\geq} 1 + 16((-x)_+)^4 + 16((x-1)_+)^4 + \frac{1}{2} y^2 + y^4 \\
    + \left[(x^2y^2 + \eta z^2)^{1/4} + \left((1-x)^2y^2 + \eta \left( t + \frac{y}{2} - z \right)^2 \right)^{1/4} \right]^4,
  \end{multline*}
  and as $\eta < 1$, it then suffices to prove that
  \begin{align*}
    \left[(x^2y^2 + \eta z^2)^{1/4} + \left((1-x)^2y^2 + \eta \left( t + \frac{y}{2} - z \right)^2 \right)^{1/4} \right]^4 \geq \eta t^2.
  \end{align*}
  By another application of the reverse Minkowski's inequality, we have
  \begin{align}
    &\left[(x^2y^2 + \eta z^2)^{1/4} + \left((1-x)^2y^2 + \eta \left( t + \frac{y}{2} - z \right)^2 \right)^{1/4} \right]^4 \notag \\
    &\qquad \geq \left(|xy|^{1/2} + |(1-x)y|^{1/2}\right)^4 + \eta \left( |z|^{1/2} + \left| t + \frac{y}{2} - z \right|^{1/2} \right)^4 \label{second-minkowski} \\
    &\qquad \geq y^2 + \eta \left( t + \frac{y}{2} \right)^2. \notag
  \end{align}
  
  {\bf Case 2.2.1A:} $|y| > \eta |t|$.  This is the case when the projection of $\{p_1,p_2,p_4\}$ is highly unaffine.
  
  We have that
  \begin{align*}
    y^2 + \eta\left( t + \frac{y}{2} \right)^2 \geq \eta t^2 + y^2 + \eta ty \geq \eta t^2.
  \end{align*}
  This gives the lower bound needed and finishes this case.

  {\bf Case 2.2.2A:} We can now suppose that
  \begin{align}
    |y| \leq \eta |t|. \label{small-y-1}
  \end{align}
  Then we also have
  \begin{align}
    \left(|z|^{1/2} + \left| t + \frac{y}{2} - z \right|^{1/2}\right)^2 &= |z| + \left| t + \frac{y}{2} - z\right| + 2|z|^{1/2} \left| t + \frac{y}{2} -z\right|^{1/2} \notag \\
    &\geq |t| - \frac{|y|}{2} + 2|z|^{1/2} \left| t + \frac{y}{2} - z\right|^{1/2}. \label{square-out-sqrt}
  \end{align}

  {\bf Case 2.2.2A.1:} $z \notin \left[ -\frac{y^2}{4|t|}, \frac{y^2}{4|t|} \right] \cup \left[ t + \frac{y}{2} - \frac{y^2}{4|t|}, t + \frac{y}{2} + \frac{y^2}{4|t|} \right]$.  This is the case when $p_2$ is vertically far from both $p_1$ and $p_4$.

  Suppose first that $|z| \leq \frac{1}{2} \left| t + \frac{y}{2} \right|$.  Then
  \begin{align*}
    |z|^{1/2} \left| t + \frac{y}{2} - z \right|^{1/2} \geq \left( \frac{y^2}{4|t|} \right)^{1/2} \left| \frac{t}{2} + \frac{y}{4} \right|^{1/2} \overset{\eqref{small-y-1}}{\geq} \left( \frac{y^2}{4|t|} \right)^{1/2} \left| \frac{t}{4} \right|^{1/2} \geq \frac{|y|}{4}.
  \end{align*}
  In the penultimate inequality, we used the fact that $\eta < 1$ to get that $|y| < |t|$ from \eqref{small-y-1}.  This together with \eqref{second-minkowski} and \eqref{square-out-sqrt} gives our needed lower bound.
  
  For the case when $|z| \geq \frac{1}{2} \left| t + \frac{y}{2} \right|$, the same proof works with the roles of $|z|^{1/2}$ and $\left| t + \frac{y}{2} - z \right|^{1/2}$ reversed.

  {\bf Case 2.2.2A.2:} $z \in \left[ -\frac{y^2}{4|t|}, \frac{y^2}{4|t|} \right] \cup \left[ t + \frac{y}{2} - \frac{y^2}{4|t|}, t + \frac{y}{2} + \frac{y^2}{4|t|} \right]$.

  Suppose first that $|z| \leq \frac{y^2}{4|t|}$, that is, $p_2$ is vertically close to $p_1$.  We then have
  \begin{align}
    |y| &\overset{\eqref{small-y-1}}{\leq} \eta |t| = \left( \eta^2 t^2 \right)^{1/2} \overset{\eqref{eta-t-bound}}{\leq} \eta^{1/2} \left( \frac{10}{\epsilon} \right)^4, \label{small-y-2} \\
    |z| &\leq \frac{y^2}{4|t|} \overset{\eqref{small-y-1}}{\leq} \frac{1}{4} \eta^2 |t| \overset{\eqref{eta-t-bound}}{\leq} \frac{1}{4} \eta^{3/2} \left( \frac{10}{\epsilon} \right)^4. \label{small-z}
  \end{align}
  Thus, since we have chosen $\eta < (\epsilon/10)^{10}$ and remembering that $d(p_1,p_4)^4 = 1 + \eta t^2 \geq 1$, we get that
  \begin{align*}
    |x| = \left( \left( d(p_1,p_2)^4 - \eta z^2 \right)^{1/2} - y^2 \right)^{1/2} \overset{\eqref{eps-lower-bound}}{\geq} \left( \left( \epsilon^4 - \eta z^2 \right)^{1/2} - y^2 \right)^{1/2} \overset{\eqref{small-y-2} \wedge \eqref{small-z}}{\geq} \frac{\epsilon}{2}.
  \end{align*}
  As $|z| \leq \frac{y^2}{4|t|} \leq \frac{1}{4} \eta |y|$, we have by our choice of $\eta$ that
  \begin{align*}
    2^4 \eta z^2 \leq 2^4 \frac{1}{16} \eta^3 y^2 \leq \frac{1}{4} \epsilon^2 y^2 \leq x^2 y^2.
  \end{align*}
  Then we can use Lemma \ref{concave-power} to show that
  \begin{align}
    (x^2y^2 + \eta z^2)^{1/4} \geq \frac{1}{8} \epsilon^{1/2} |y|^{1/2} + \eta^{1/4} |z|^{1/2}. \label{large-xy-z-gap}
  \end{align}
  Now we have that
  \begin{align*}
    &\left[(x^2y^2 + \eta z^2)^{1/4} + \left((1-x)^2y^2 + \eta \left( t + \frac{y}{2} - z \right)^2 \right)^{1/4} \right]^4 \\
    &\qquad \overset{\eqref{large-xy-z-gap}}{\geq} \left[\frac{1}{8} \epsilon^{1/2} |y|^{1/2} + \eta^{1/4} |z|^{1/2} + \eta^{1/4} \left| t + \frac{y}{2} - z \right|^{1/2} \right]^4 \\
    &\qquad \geq \left[\frac{1}{4} \epsilon^{1/2} |y|^{1/2} + \eta^{1/4} \left| t + \frac{y}{2} \right|^{1/2} \right]^4 \\
    &\qquad > \left[\eta^{1/4} |y|^{1/2} + \eta^{1/4} \left| t + \frac{y}{2} \right|^{1/2} \right]^4 \\
    &\qquad \geq \eta t^2.
  \end{align*}
  In the penultimate inequality, we used the fact that we have chosen $\eta < (\epsilon/10)^{10}$.

  Thus, we may suppose $\left| t + \frac{y}{2} - z \right| \leq\frac{y^2}{4|t|}$.  We can then simply repeat the argument with $|z|$ in place of $\left| t+ \frac{y}{2} - z\right|$ and $(1-x)$ in place of $x$.  This is the case when $p_2$ is vertically close to $p_4$.  The only problem will be to show that $1-x$ is sufficently large.  To do this, we will use the fact that we have supposed
  \begin{align}
    |y| &\overset{\eqref{small-y-1}}{\leq} \eta |t| \overset{\eqref{eta-t-bound}}{\leq} \eta^{1/2} \left(\frac{10}{\epsilon}\right)^4, \label{e:abs-y-bnd} \\
    \left| t + \frac{y}{2} - z \right| &\leq \frac{y^2}{4|t|} \overset{\eqref{eta-t-bound} \wedge \eqref{small-y-1}}{\leq} \frac{1}{4} \eta^{3/2}\left( \frac{10}{\epsilon} \right)^4. \label{e:t+y/2-bnd}
  \end{align}
  in conjunction with
  \begin{align}
    \left((1-x)^2 + y^2\right)^2 + \eta \left( t + \frac{y}{2} - z \right)^2 \overset{\eqref{eps-lower-bound}}{\geq} \epsilon^4(1 + \eta t^2) \geq \epsilon^4 \label{e:large-1-x}
  \end{align}
  and $\eta < (\epsilon/10)^{10}$ to get that
  \begin{multline*}
    |1-x| \overset{\eqref{e:abs-y-bnd} \wedge \eqref{e:t+y/2-bnd} \wedge \eqref{e:large-1-x}}{\geq} \left[ \left( \epsilon^4 - \frac{1}{16} \eta^4 \left( \frac{10}{\epsilon} \right)^8 \right)^{1/2} - \eta \left( \frac{10}{\epsilon} \right)^8 \right]^{1/2} \\
    \geq \left[ \left( \epsilon^4 - \frac{1}{16} \left( \frac{\epsilon}{10} \right)^{32} \right)^{1/2} - \frac{\epsilon^2}{100} \right]^{1/2} \geq \frac{\epsilon}{2}.
  \end{multline*}
  This allows us to continue as was done previously.  This finishes case 2.2.2A, which finishes the lower bound associated with $y^4 + y^2 + ((x-1)_+)^4 + ((-x)_+)^4$.

  {\bf B:} $\mathbf{\frac{1}{4} \eta^2 z^2}$ {\bf lower bound}.
  
  {\bf Case 2.2.1B:} $t^2 \leq \frac{1}{2} z^2$.
  
  As $d(p_1,p_2) \geq |x|$ and $d(p_2,p_4) \geq |1-x|$, we then have
  \begin{multline*}
    (d(p_1,p_2) +d(p_2,p_4))^4 \geq d(p_1,p_2)^4 + 4|x|^3 |1-x| + 6 |x|^2 |1-x|^2 + 4 |x| |1-x|^3 + |1-x|^4 \\
    \geq 1 + \eta z^2 \geq 1 + \eta t^2 + \frac{\eta}{2} z^2 > 1 + \eta t^2 + \frac{1}{4} \eta^2 z^2.
  \end{multline*}
  In the last inequality, we used the fact that $\eta < 1$.

  {\bf Case 2.2.2B:} $t^2 > \frac{1}{2} z^2$.

  We will prove instead that when $\eta < (\epsilon/10)^{10}$, we have that
  \begin{align}
    (d(p_1,p_2) + d(p_2,p_4))^4 - d(p_1,p_4)^4 \geq \frac{1}{2} \eta^2 t^2. \label{t2-lower-bound}
  \end{align}
  By the hypothesis of the current subcase, this clearly suffices.

  As in the calculations that led up to \eqref{reverse-minkowski-2}, an application of the reverse Minkowski's inequality gives us
  \begin{align}
    (d(p_1,p_2) + &d(p_2,p_4))^4 \notag \\
    &\geq \left( |x|+ |1-x| \right)^4 + 2\left( |xy|^{1/2} + |(1-x)y|^{1/2} \right)^4 + \eta \left[ |z|^{1/2} + \left| t + \frac{y}{2} - z \right|^{1/2} \right]^4 \notag \\
    &\geq 1 + 2\left( |xy|^{1/2} + |(1-x)y|^{1/2} \right)^4 + \eta \left[ |z|^{1/2} + \left| t + \frac{y}{2} - z \right|^{1/2} \right]^4 \label{fourth-minkowski} \\
    &\geq 1 + 2y^2 + \eta \left( t + \frac{y}{2} \right)^2. \notag
  \end{align}

  {\bf Case 2.2.2B.1:} $|y| > \eta |t|$.  This is the case when the projection of $\{p_1,p_2,p_4\}$ to $\R^2$ is highly unaffine.

  An easy calculation gives
  \begin{align*}
    2y^2 + \eta \left( t + \frac{y}{2}\right)^2 \geq \eta t^2 + 2y^2 + \eta t y > \eta t^2 + 2 \eta |t y| + \eta t y > (\eta + \eta^2) t^2,
  \end{align*}
  which proves the needed inequality.
  
  {\bf Case 2.2.2B.2:} We may now suppose
  \begin{align}
    |y| \leq \eta |t|. \label{small-y-3}
  \end{align}
  As before, we have that
  \begin{align}
    \left( |z|^{1/2} + \left| t + \frac{y}{2} - z \right|^{1/2} \right)^2 &\geq |z| + \left| t + \frac{y}{2} - z \right| + 2 |z|^{1/2} \left| t + \frac{y}{2} - z \right|^{1/2} \notag \\
    &\overset{\eqref{small-y-3}}{\geq} \left(1 - \frac{\eta}{2} \right)|t| + 2 |z|^{1/2} \left| t + \frac{y}{2} - z \right|^{1/2} \label{square-out-sqrt-2} \\
  \end{align}

  {\bf Case 2.2.2B.2.1:} $z \notin \left[ -\eta^2|t|, \eta^2|t| \right] \cup \left[ t + \frac{y}{2} - \eta^2|t|, t + \frac{y}{2} + \eta^2|t| \right]$.  This is the case when $p_2$ is vertically far from $p_1$ and $p_4$.

  Suppose first that $|z| \leq \frac{1}{2} \left| t + \frac{y}{2} \right|$.  Then
  \begin{align*}
    |z|^{1/2} \left| t + \frac{y}{2} - z \right|^{1/2} \geq \eta |t|^{1/2} \left| \frac{t}{2} + \frac{y}{4} \right|^{1/2} \overset{\eqref{small-y-3}}{\geq} \eta |t|^{1/2} \left| \frac{t}{4} \right|^{1/2} \geq \frac{\eta}{2} |t|.
  \end{align*}
  As before, we used the fact that $\eta < 1$ to get that $|y| < |t|$ from \eqref{small-y-3}.  This together with \eqref{fourth-minkowski} and \eqref{square-out-sqrt-2} gives our needed lower bound.

  For the case when $|z| \geq \frac{1}{2} \left| t + \frac{y}{2} \right|$, the same proof works with the roles of $|z|^{1/2}$ and $\left| t + \frac{y}{2} - z \right|^{1/2}$ reversed.  This completes the lower bound in this subcase.

  {\bf Case 2.2.2B.2.2:} $z \in \left[ -\eta^2|t|, \eta^2|t| \right] \cup \left[ t + \frac{y}{2} - \eta^2|t|, t + \frac{y}{2} + \eta^2|t| \right]$.

  We will first suppose that $p_2$ is vertically close to $p_1$:
  \begin{align}
    |z| \leq \eta^2|t|. \label{small-z-1}
  \end{align}
  Then, as before, we have
  \begin{align*}
    |y| &\overset{\eqref{small-y-3}}{\leq} \eta |t| \overset{\eqref{eta-t-bound}}{\leq} \eta^{1/2} \left( \frac{10}{\epsilon} \right)^4,\\
    |z| &\leq \eta^2 |t| \overset{\eqref{eta-t-bound}}{\leq} \eta^{3/2} \left( \frac{10}{\epsilon} \right)^4.
  \end{align*}
  As before, because we have taken $\eta < (\epsilon/10)^{10}$, we get that
  \begin{align*}
    |x| \geq \frac{\epsilon}{2}.
  \end{align*}
  Remembering that $d(p_1,p_2) \geq |x|$ and $d(p_2,p_4) \geq \left(|1-x|^4 + \eta \left( t + \frac{y}{2} - z \right)^2 \right)^{1/4} \geq |1-x|$, we have that
  \begin{align*}
    (d(p_1,p_2) + &d(p_2,p_4))^4 \\
    &\geq x^4 + 4x^3(1-x) + 6x^2\left((1-x)^4 + \eta \left( t + \frac{y}{2} - z \right)^2 \right)^{1/2} + 4x(1-x)^3 + (1 - x)^4 \\
    &\qquad + \eta \left( t + \frac{y}{2} - z \right)^2\\
    &\overset{\eqref{small-y-3} \wedge \eqref{small-z-1}}{\geq} 1 + 6x^2\left((1-x)^4 + \eta \left( 1 - \frac{\eta}{2} - \eta^2 \right)^2 t^2 \right)^{1/2} - 6x^2(1-x)^2 \\
    &\qquad + \eta \left( 1 - \frac{\eta}{2} - \eta^2 \right)^2 t^2 \\
    &\geq 1 + 6x^2\left((1-x)^4 + (\eta - 2\eta^2 ) t^2 \right)^{1/2} - 6x^2(1-x)^2 + \eta t^2 - 2 \eta^2 t^2.
  \end{align*}
  In the last inequality, we had to use the fact that $\eta < 1$.  Thus, to prove \eqref{t2-lower-bound} it suffices to show that
  \begin{align}
    6x^2\left((1-x)^4 + (\eta - 2\eta^2 ) t^2 \right)^{1/2} \geq 6x^2(1-x)^2 + \frac{5}{2} \eta^2 t^2 \label{final-t-ineq-step}
  \end{align}

  {\bf Case 2.2.2B.2.2.1:} $4 (1 - x)^4 \geq (\eta - 2\eta^2)t^2$.
  
  First note that 
  when $4a \geq b>0$, by  concavity of square root we have that  
    $$(a+b)^{1/2} = a^{1/2} \left( 1 + \frac{b}{a} \right)^{1/2} \geq a^{1/2} \left( 1 + \frac{(4+1)^{1/2}-1}{4} \frac{b}{a} \right) \geq a^{1/2} + \frac{b}{4a^{1/2}}.$$
  Then, using the hypothesis of this subcase, we get that
  \begin{align*}
    6x^2\left((1-x)^4 + (\eta - 2\eta^2 ) t^2 \right)^{1/2} &\geq 6x^2 (1-x)^2 + \frac{6}{4} \frac{x^2}{(1-x)^2} (\eta - 2\eta^2)t^2 \\
    &\geq 6x^2 (1-x)^2 + \frac{\epsilon^2}{100} (\eta - 2\eta^2)t^2.
  \end{align*}
  In the last inequality, we used the fact that $|x| \geq \frac{\epsilon}{2}$.
  As we have chosen $\eta < (\epsilon/10)^{10}$, we get that
  \begin{align*}
    \frac{\epsilon^2}{100} (\eta - 2\eta^2)t^2 \geq \frac{5}{2} \eta^2 t^2,
  \end{align*}
  proving \eqref{final-t-ineq-step}.
  
  {\bf Case 2.2.2B.2.2.2:} $4(1-x)^4 < (\eta - 2\eta^2)t^2$.
  
  Then by Lemma \ref{concave-power} and the fact that $|x| > \frac{\epsilon}{2}$, we have that
  \begin{align*}
    6x^2\left((1-x)^4 + (\eta - 2\eta^2 ) t^2 \right)^{1/2} &\geq  6x^2 (1-x)^2 + \frac{3}{4} \epsilon^2 (\eta - 2\eta^2)^{1/2} |t|. 
  \end{align*}
  Thus, to prove \eqref{final-t-ineq-step}, it suffices to show
  \begin{align*}
    \frac{3}{4} \epsilon^2 (\eta - 2\eta^2)^{1/2} |t| \geq \frac{5}{2} \eta^2 t^2, \qquad \forall t \overset{\eqref{eta-t-bound}}{\in} \left[-\frac{1}{\eta^{1/2}} \left( \frac{10}{\epsilon} \right)^4, \frac{1}{\eta^{1/2}} \left( \frac{10}{\epsilon} \right)^4 \right].
  \end{align*}
  Put another way, we are being asked to show (after using the bound $(\eta - 2\eta^2)^{1/2} \geq \frac{1}{2} \eta^{1/2}$) that
  \begin{align*}
    |t| \leq \frac{1}{\eta^{1/2}} \left( \frac{10}{\epsilon} \right)^4 \Longrightarrow |t| \leq \frac{3}{20} \frac{\epsilon^2}{\eta^{3/2}}.
  \end{align*}
  This follows because we have chosen $\eta < (\epsilon/10)^{10}$.  Thus, we have proven the $t^2$ bound when $|z| \leq \eta^2|t|$.

  For the case when $\left| t + \frac{y}{2} - z \right| \leq \eta^2|t|$ ({\it i.e.} $p_2$ is vertically close to $p_4$), we can proceed as in the $y^4 + y^2 + ((x-1)_+)^4 + ((-x)_+)^4$ case by just repeating the above steps with $|z|$ in place of $\left| t + \frac{y}{2} - z\right|$ and $1-x$ in place of $x$.  We can use the same argument as in the $y^4 + y^2 + ((x-1)_+)^4 + ((-x)_+)^4$ case to show that $|1-x|$ must be sufficiently large.
  
  This now finishes the proof for the $t^2$ bound, which finishes the $z^2$ bound, which also finishes the proof of Case 2.2 and thus the entire proposition.

\end{proof}

\begin{bibdiv}
\begin{biblist}

\bib{BJ}{article}{
 title={Harmonic measure, $L^2$ estimates and the Schwarzian derivative},
 author={Bishop, C. J.},
 author={Jones, P. W.},
 journal={J. Anal. Math.},
 volume={62},
 date={1994},
 pages={77--113}
}

\bib{Ch}{article}{
	Author = {Christ, M.},
	Coden = {CQMAAQ},
	Fjournal = {Colloquium Mathematicum},
	Issn = {0010-1354},
	Journal = {Colloq. Math.},
	Mrclass = {42B20 (42B25 42B30)},
	Mrnumber = {MR1096400 (92k:42020)},
	Mrreviewer = {Michael J. Wilson},
	Number = {2},
	Pages = {601--628},
	Title = {A {$T(b)$} theorem with remarks on analytic capacity and the {C}auchy integral},
	Volume = {60/61},
	Year = {1990}}

\bib{cygan}{article}{
 title={Subadditivity of homogeneous norms on certain nilpotent Lie groups},
 author={Cygan, H.},
 journal={Proc. Amer. Math. Soc.},
 volume={83},
 number={1},
 date={1981},
 pages={69--70}
}

\bib{Da}{book}{
	Address = {Berlin},
	Author = {David, G.},
	Isbn = {3-540-53902-6},
	Mrclass = {42B20 (47G10)},
	Mrnumber = {92k:42021},
	Mrreviewer = {Stephen Buckley},
	Pages = {x+107},
	Publisher = {Springer-Verlag},
	Series = {Lecture Notes in Mathematics},
	Title = {Wavelets and singular integrals on curves and surfaces},
	Volume = {1465},
	Year = {1991}}

\bib{DS91}{article}{
 title={Singular integrals and rectifiable sets in $\mathbb{R}^n$: beyond Lipschitz graphs},
 author={David, G.},
 author={Semmes, S.},
 journal={Ast\'erique},
 date={1991},
 number={193}
}

\bib{DS93}{book}{
 title={Analysis of and on Uniformly Rectifiable Sets},
 author={David, G.},
 author={Semmes, S.},
 publisher={American Mathematical Society},
 place={Providence, RI},
 series={Mathematical Surveys and Monographs},
 volume={38},
 date={1993}
}

\bib{FFP}{article}{
 author={Ferrari, F.},
 author={Franchi, B.},
 author={Pajot, H.},
 title={The geometric traveling salesman problem in the Heisenberg group},
 journal={Rev. Mat. Iberoam.},
 volume={23},
 number={2},
 year={2007},
 pages={437--480},
}

\bib{Ha-non-AR}{article}{
	Author = {Hahlomaa, I.},
	Fjournal = {Fundamenta Mathematicae},
	Issn = {0016-2736},
	Journal = {Fund. Math.},
	Mrclass = {30E20 (51F99 54E35)},
	Mrnumber = {MR2163108},
	Mrreviewer = {Herv{\'e} Pajot},
	Number = {2},
	Pages = {143--169},
	Title = {Menger curvature and {L}ipschitz parametrizations in metric spaces},
	Volume = {185},
	Year = {2005}}

\bib{Ha-AR}{article}{
	Author = {Hahlomaa, I.},
	Journal = {Ann. Acad. Sci. Fenn. Math.},
	Optvolume = {32},
	Pages = {99-123},
	Title = {Curvature Integral and {L}ipschitz parametrizations in 1-regular metric spaces},
	Year = {2007}}

\bib{Jones-TSP}{article}{
 title={Rectifiable sets and the traveling salesman problem},
 author={Jones, P. W.},
 journal={Invent. Math.},
 volume={102},
 number={1},
 pages={1--15},
 date={1990}
}

\bib{juillet}{article}{
 author={Juillet, N.},
 title={A counterexample for the geometric traveling salesman problem in the Heisenberg group}, 
 journal={Rev. Mat. Iberoam.},
 volume={26},
 year={2010},
 number={3},
 pages={1035--1056},
}

\bib{Li}{article}{
  author = {Li, S.},
  title = {Coarse differentiation and quantitative nonembeddability for Carnot groups},
  journal = {J. Funct. Anal.},
  volume = {266},
  number = {1},
  pages = {4616--4704},
  year = {2014},
}

\bib{Li-2}{misc}{
  author = {Li, S.},
  title = {Markov convexity and nonembeddability of the Heisenberg group},
  note = {arXiv:1404.6751},
  year = {2014},
}

\bib{Li-Schul-construction-bounds}{misc}{
  author = {Li, S.},
  author = {Schul, R.},
  title = {An upper bound for the length of a Traveling Salesman path in the Heisenberg group},
  note = {arXiv:1403.3951},
  year = {2014},
}

%
%

\bib{montgomery}{book}{
  author = {Montgomery, R.},
  title = {A tour of sub-Riemannian geometries, their geodesics and applications},
  volume = {91},
  series = {Mathematical Surveys and Monographs},
  publisher = {American Mathematical Society},
  year = {2002},
}

\bib{Ok-TSP}{article}{
 title={Characterizations of subsets of rectifiable curves in $\bR^n$},
 author={Okikiolu, K.},
 journal={J.~ London Math.~ Soc.~ (2)},
 volume={46},
 date={1992},
 pages={336--348}
}

\bib{Pajot}{book}{
 title={Analytic Capacity, Rectifiability, Menger Curvature and the Cauchy Integral},
 author={Pajot, H.},
 publisher={Springer-Verlag},
 place={Berlin},
 date={2002},
 series={Lecture Notes in Mathematics},
 volume={1799}
}

\bib{Schul-AR}{article}{
 title={Ahlfors-regular curves in metric spaces},
 author={Schul, R.},
 journal={Ann. Acad. Sci. Fenn. Mat.},
 volume={32},
 date={2007},
 pages={437--460}
}

\bib{Schul-TSP}{article}{
 title={Subsets of rectifiable curves in Hilbert space---the Analyst's TSP},
 author={Schul, R.},
 journal={J. Anal. Math.},
 volume={103},
 date={2007},
 pages={331--375}
}

\end{biblist}
\end{bibdiv}

\end{document}